\documentclass[11pt]{amsart}
\usepackage[utf8]{inputenc}
\usepackage[T1]{fontenc}
\usepackage{ae,aecompl}
\usepackage[margin=1in]{geometry}
\usepackage{graphicx}
\usepackage{amsmath, accents}
\usepackage{amsfonts}
\usepackage{amsthm}
\usepackage{tikz}
\usepackage{ytableau}
\usepackage{caption}
\usetikzlibrary{decorations.markings}
\usepackage{microtype}
\usepackage[bookmarks=false]{hyperref}
\usepackage{subcaption}

\usepackage[toc,page]{appendix}
\usepackage{minibox}

\makeatletter
\newcommand\xleftrightarrow[2][]{%
  \ext@arrow 9999{\longleftrightarrowfill@}{#1}{#2}}
\newcommand\longleftrightarrowfill@{%
  \arrowfill@\leftarrow\relbar\rightarrow}
\makeatother

\graphicspath{ {graphics/} }

\newcommand\myeq{\mathrel{\stackrel{\makebox[0pt]{\mbox{\normalfont\tiny def}}}{=}}}

\theoremstyle{definition}
\newtheorem{thm}{Theorem}[section]
\newtheorem{lem}[thm]{Lemma}
\newtheorem{df}[thm]{Definition}

\newtheorem{prop}[thm]{Proposition}
\newtheorem{cor}[thm]{Corollary}

\newtheorem{rem}[thm]{Remark}

\newcommand{\C}{\mathbb{C}}

\newcommand{\R}{\mathbb{R}}
\newcommand{\Z}{\mathbb{Z}}
\newcommand{\Zp}{\mathbb{Z}_+}
\newcommand{\N}{\mathbb{N}}
\newcommand{\ii}{\mathbf{i}}

\newcommand{\U}{\mathcal{U}}

\newcommand{\CC}{\mathcal{C}}

\newcommand{\TT}{\mathbb{T}}

\newcommand{\GT}{\mathbb{GT}}
\newcommand{\GTp}{\mathbb{GT}^+}

\newcommand{\bfw}{\mathbf{w}}
\newcommand{\bfx}{\mathbf{x}}
\newcommand{\bfy}{\mathbf{y}}

\pagestyle{headings}

\numberwithin{equation}{section}

\begin{document}

\title{Pieri Integral Formula and Asymptotics of Jack Unitary Characters}

\author{Cesar Cuenca}

\address{Department of Mathematics, MIT, Cambridge, MA, USA}
\email{cuenca@mit.edu}

\date{}

\begin{abstract}
We introduce Jack (unitary) characters and prove two kinds of formulas that are suitable for their asymptotics, as the lengths of the signatures that parametrize them go to infinity. The first kind includes several integral representations for Jack characters of one variable. The second identity we prove is the Pieri integral formula for Jack characters which, in a sense, is dual to the well known Pieri rule for Jack polynomials. The Pieri integral formula can also be seen as a functional equation for irreducible spherical functions of virtual Gelfand pairs.

As an application of our formulas, we study the asymptotics of Jack characters as the corresponding signatures grow to infinity in the sense of Vershik-Kerov. We prove the existence of a small $\delta > 0$ such that the Jack characters of $m$ variables have a uniform limit on the $\delta$-neighborhood of the $m$-dimensional torus. Our result specializes to a theorem of Okounkov and Olshanski.
\end{abstract}

\maketitle

\setcounter{tocdepth}{1}
\tableofcontents

\section*{Introduction}\label{introduction}

A signature of length $N$ is a nonincreasing integer sequence $\lambda = (\lambda_1 \geq \lambda_2 \geq \dots \geq \lambda_N)$, and $\GT_N$ denotes the set of all such signatures. One can define the \textit{Jack (Laurent) polynomial} $J_{\lambda}(x_1, \ldots, x_N; \theta)$ associated to $\lambda\in\GT_N$ and with parameter $\theta > 0$, \cite{M, St}.
In the literature, the deformation parameter of Jack polynomials is sometimes $\alpha$ instead of $\theta$; both parameters are related by $\theta = 1/\alpha$.
For $\theta = 1$, the Jack polynomials are the well known \textit{Schur polynomials}.

The main object of our study are the \textit{Jack unitary characters}.
We define the Jack unitary character of rank $N$, with $m$ variables and parametrized by $\lambda$ to be
\begin{equation*}
J_{\lambda}(x_1, \ldots, x_m; N, \theta) := \frac{J_{\lambda}(x_1, \ldots, x_m, 1^{N-m}; \theta)}{J_{\lambda}(1^N; \theta)},
\end{equation*}
where $1^k$, $k\in\N$, stands for a string of $k$ ones.
Our terminology is based on the fact that, when $\theta = 1$ and $N = m$, the Jack unitary character $J_{\lambda}(x_1, \dots, x_N; N, \theta)$ becomes the normalized character of the unitary group $U(N)$ corresponding to its irreducible representation parametrized by the highest weight $\lambda$.
Therefore Jack unitary characters are natural one-parameter deformations of the normalized characters of the unitary groups.
We will call Jack unitary characters simply \textit{Jack characters}, for convenience.
The goal of this paper is to establish several formulas for Jack characters that are suitable to study their asymptotics, and we also work out in detail one asymptotic theorem about Jack characters, by using our toolbox.

The paper \cite{C1} is this article's companion; there we introduced \textit{Macdonald characters}, which depend on the well known Macdonald polynomials just like Jack characters depend on Jack polynomials.
We proved several formulas for them and studied their asymptotics as the signatures grow to infinity, while the number of variables remains finite.
As a consequence of our asymptotic results, we characterized the \textit{boundary of the $(q, t)$-Gelfand-Tsetlin graph}, a generalization of one of the main results of Gorin's article \cite{G}.
It was natural to consider the degeneration of the parameters $q, t$ used for Macdonald characters in the ``Jack regime'' $t = q^{\theta}$, $q\rightarrow 1$, and ask whether the formulas from \cite{C1} degenerate into formulas for Jack characters. It will be shown that certain formulas do admit a degeneration, but not all: the \textit{multiplicative formula for Macdonald characters}, \cite[Thm. 4.1]{C1}, cannot be degenerated. One of the main results of this paper is the \textit{Pieri integral formula}, which effectively replaces the multiplicative formula in our setting and is suitable to study the asymptotics of Jack characters.

We make a final remark on terminology. The name \textit{Jack character} has been used before to denote certain natural one-parameter deformations of the characters of the \textit{symmetric groups} $S(N)$. The Jack symmetric group characters were introduced in Lassalle's work, \cite{L}, though he did not use that terminology.
In recent years, Maciej Doł\k{e}ga, Valentin F\'{e}ray and Piotr \'{S}niady further studied Jack symmetric group characters, their structural theory and asymptotics, see e.g. \cite{DF13, DF16, DFS, Sn}. Their work and ours appear to be in different directions.

The contents of this paper are divided into two parts. In the first part, we find integral representations for Jack characters of one variable and arbitrary rank, and we also prove the Pieri integral formula.
In the second part, we make use of the formulas obtained in the first part to study the limits of Jack characters of a fixed number of variables as the signatures $\{\lambda(N)\}_{N\geq 1}$ grow to infinity in the sense of Vershik-Kerov.
We proceed with more details on each of the two parts of this paper. For the reader's convenience, all definitions in the introduction are repeated later in the text.

\subsection{Formulas for Jack characters}

The formulas we obtain in this paper fall into two categories:

(A) Integral representations for Jack characters of one variable and arbitrary rank $N$.

The integral representations in this article are degenerations of formulas for Macdonald characters, \cite{C1}. An example is the following.

\begin{thm}[Theorem $\ref{jackthm2}$ below]\label{thm:integralsample}
Let $\theta > 0$, $N\in\N$, $\lambda\in\GT_N$ and $|x| < 1$. Then the integral below converges absolutely and the identity holds
\begin{equation*}
J_{\lambda}(x; N, \theta) = -\frac{\Gamma(\theta N)}{(1 - x)^{\theta N - 1}}\frac{1}{2\pi\ii}\int_{\CC^+}{x^z\prod_{i=1}^N{\frac{\Gamma(\lambda_i + \theta(N-i)-z)}{\Gamma(\lambda_i + \theta(N-i+1)-z)}}dz},
\end{equation*}
where the positively oriented contour $\CC^+$ consists of the segment $[-M+r\ii, \ -M-r\ii]$ and horizontal lines $[-M+r\ii, +\infty+r\ii)$, $[-M-r\ii, +\infty-r\ii)$, for any $M > -\lambda_N$ and any $r > 0$, see Figure $\ref{fig:C}$.
\end{thm}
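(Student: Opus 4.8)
The plan is to derive this one-variable formula as a degeneration of the corresponding Macdonald character identity from \cite{C1}, following the ``Jack regime'' $t = q^{\theta}$, $q \to 1^-$. First I would recall the relevant integral representation for the Macdonald character $P_{\lambda}(x; N, q, t)$ of one variable: it has the shape of a $q$-contour integral (or a sum of residues) whose integrand involves ratios of $q$-Gamma functions $\Gamma_q(\lambda_i + \theta(N-i) - z)$ built from the parts of $\lambda$ and the geometric progression $t^{N-i} = q^{\theta(N-i)}$. The key analytic input is the classical limit $\Gamma_q(w) \to \Gamma(w)$ as $q \to 1^-$, applied uniformly on compact subsets of the relevant half-plane, together with the degeneration of the $q$-binomial prefactor $(q^{\theta N}; q)_{\infty}/((q; q)_{\infty}(\text{stuff}))$ into the Euler Gamma prefactor $\Gamma(\theta N)/(1-x)^{\theta N - 1}$. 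The normalization $J_{\lambda}(1^N;\theta)$ in the denominator degenerates correspondingly, and the principal specialization formula for Jack polynomials $J_{\lambda}(1^N;\theta) = \prod_{1\le i<j\le N}\frac{\Gamma(\lambda_i-\lambda_j+\theta(j-i)+\ldots)}{\ldots}$ tracks the limit cleanly.

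The steps, in order, would be: (1) state the Macdonald one-variable integral representation from \cite{C1} with the contour chosen so that it encircles the poles coming from the numerator $q$-Gammas; (2) substitute $t = q^{\theta}$ and rewrite every $q$-Pochhammer/$q$-Gamma factor in the integrand; (3) take $q \to 1^-$ inside the integral, justifying the interchange of limit and integral by a dominated-convergence argument — here one uses that on the horizontal rays $\mathrm{Im}\, z = \pm r$ the integrand decays like $|x^z| = |x|^{\mathrm{Re}\, z}$ times a ratio of Gammas that is bounded (the ratio $\Gamma(a-z)/\Gamma(a+\theta-z)$ has polynomial decay $\sim |z|^{-\theta}$ as $\mathrm{Re}\, z \to +\infty$ by Stirling), and on the vertical segment the integrand is continuous in $q$ uniformly; (4) identify the limiting prefactor as $-\Gamma(\theta N)/(1-x)^{\theta N - 1}$, checking the sign and the exponent by a direct computation with Euler's reflection/duplication formulas; (5) confirm absolute convergence of the resulting integral over $\CC^+$ for $|x| < 1$ directly from the Stirling asymptotics in step (3), which also validates the deformation of the contour parameters $M > -\lambda_N$ and $r > 0$ (the integrand is holomorphic to the right of the leftmost pole at $z = \lambda_N$, i.e.\ for $\mathrm{Re}\, z > -M$, so Cauchy's theorem lets $M$ and $r$ vary freely).

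The main obstacle I anticipate is \textbf{step (3)}: making the passage $q\to 1^-$ rigorous under the integral sign. The Macdonald integrand and the Jack integrand live on genuinely different contours (a $q$-deformed contour versus the fixed $\CC^+$), so one must first massage the Macdonald formula onto a $q$-independent contour — presumably by a residue computation collapsing the $q$-integral to a convergent bilateral series $\sum_{k\ge 0}$ over the poles, then recognizing that series as a Riemann sum (or an Abel-type limit) for the Jack contour integral. Controlling the tail of this series uniformly in $q$ near $1$, and showing the discrete-to-continuous passage is legitimate, is the delicate part; the rest is bookkeeping with Gamma-function identities. An alternative, possibly cleaner, route that I would keep in reserve is to bypass Macdonald polynomials entirely and prove the formula intrinsically: expand the right-hand side by residues, obtaining an explicit series in the parts of $\lambda$, and match it termwise against a known branching/combinatorial expansion of $J_{\lambda}(x, 1^{N-1};\theta)/J_{\lambda}(1^N;\theta)$ coming from the Pieri rule or the Okounkov--Olshanski-type binomial formula for Jack polynomials; this would make the $|x|<1$ convergence and contour-independence transparent and avoid all $q$-analytic subtleties.
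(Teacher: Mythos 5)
Your overall route is the paper's own: Theorem \ref{jackthm2} is obtained in the text exactly by degenerating the one-variable Macdonald character formula of \cite{C1} (Theorem \ref{macdonaldthm2}) in the regime $t=q^{\theta}$, $q\to 1^-$, using $\Gamma_q\to\Gamma$, the limit $(xq^{\theta N};q)_\infty/(xq;q)_\infty\to(1-x)^{1-\theta N}$, and dominated convergence. However, the obstacle you flag as the main difficulty is not an obstacle at all, while the step you pass over quickly is where the actual work lies. Theorem \ref{macdonaldthm2} is already an honest contour integral over a contour of the very same shape $\CC^+$; its only $q$-dependence is the constraint $0<r<-\pi/(2\ln q)$ on the height of the rays, which relaxes as $q\to 1^-$, so a single fixed contour serves simultaneously for all $q\in[\delta,1)$ with $\delta$ close to $1$. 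There is no $q$-deformed contour, no collapse to a bilateral residue series, and no discrete-to-continuous (Riemann sum) passage to perform; that whole paragraph of your plan addresses a nonexistent issue.

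The genuine gap is in your step (3). To interchange $q\to 1^-$ with the integral you need a dominating function valid uniformly in $q$ near $1$ along the entire contour, in particular on the rays $\Re z\to+\infty$ where the numerator $q$-Gammas have their poles accumulating. Your appeal to Stirling controls only the limiting integrand $\prod_i\Gamma(\lambda_i+\theta(N-i)-z)/\Gamma(\lambda_i+\theta(N-i+1)-z)$, not the $q$-Gamma ratios at fixed $q<1$, and uniformity in $q$ is precisely what must be proved. The paper's proof devotes most of its length to the claim
\begin{equation*}
\sup_{q\in[\delta,1]}\ \sup_{z\in\CC^+}\left|\prod_{i=1}^N\frac{\Gamma_q(\lambda_i+\theta(N-i)-z)}{\Gamma_q(\lambda_i+\theta(N-i+1)-z)}\right|=O(1),
\end{equation*}
established by the elementary inequality $|1-q^{\theta-z}|\le|1-q^{-z}|$ for $\Re z\ge K>\theta$ and $q$ close to $1$, which makes $|\Gamma_q(-z)/\Gamma_q(\theta-z)|$ decrease under unit shifts of $z$ and reduces the supremum over the unbounded rays to a supremum over the compact strip $K\le\Re z\le K+1$, where the locally uniform convergence $\Gamma_q\to\Gamma$ on $\C\setminus\{0,-1,-2,\dots\}$ applies; the bounded part of the contour is handled the same way. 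With this bound the integrand is dominated by a constant times $|x^z|=|x|^{\Re z}$, which is integrable on $\CC^+$ because $|x|<1$ (no $|z|^{-\theta}$ decay is needed), and this simultaneously gives the absolute convergence asserted in the statement. Without an estimate of this kind your dominated-convergence step is unjustified, so as written the proposal does not yet constitute a proof; your fallback idea of a direct residue/binomial-formula matching is a different and untested route, not what the paper does.
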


(B)  Formulae expressing the product $J_{\lambda}(x_1, \ldots, x_{m-1}; N, \theta)\cdot J_{\lambda}(x; N, \theta)$ as a (continual) convex combination of Jack characters $\{J_{\lambda}(x_1w_1, \ldots, x_{m-1}w_{m-1}, x(w_1\cdots w_{m-1})^{-1}; N, \theta) : (w_1, \dots, w_m)\in\U_x \}$ of $m$ variables, where $\U_x \subset \R^m$ is the compact subset defined in $(\ref{defn:Ax})$.
An example is given by the following theorem.

\begin{thm}[reformulation of Theorem $\ref{thm:pieri}$ below for $m = 1$]\label{thm:pierisample}
Let $\theta > 0$, $N\in\N$, $N \geq 2$, $\lambda\in\GT_N$, and any real numbers $1 > y > x > 0$. Then
\begin{gather*}
\begin{gathered}
J_{\lambda}(x; N, \theta)J_{\lambda}(y; N, \theta) = \frac{\Gamma(N\theta)y^{\theta}}{\Gamma((N-1)\theta)\Gamma(\theta)((1-x)(1 - y))^{\theta}}\\
\times\int_y^1{ J_{\lambda}\left( xw, \frac{y}{w}; N, \theta \right) \left(1 - \frac{xw^2}{y} \right)\left(1 - \frac{xw}{y}\right)^{\theta - 1} \left(\frac{1}{w} - 1\right)^{\theta - 1} \left( \frac{(1 - xw)(1 - \frac{y}{w})}{(1-x)(1-y)} \right)^{\theta(N-1)-1} \frac{dw}{w^2}}.
\end{gathered}
\end{gather*}
\end{thm}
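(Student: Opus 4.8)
\emph{Proof strategy.} Since the displayed identity is the $m=1$ case of Theorem~\ref{thm:pieri}, one option is to specialize that theorem; I will instead outline a self-contained argument built on the one-variable integral representation of Theorem~\ref{jackthm2}. The first move is to remove the character normalizations: using $J_\lambda(z;N,\theta)=J_\lambda(z,1^{N-1};\theta)/J_\lambda(1^N;\theta)$ for $z\in\{x,y\}$, $J_\lambda(xw,y/w;N,\theta)=J_\lambda(xw,y/w,1^{N-2};\theta)/J_\lambda(1^N;\theta)$, and the known product formula for the principal specialization $J_\lambda(1^N;\theta)$, the claim becomes an identity among Jack Laurent polynomials. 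I would then prove it either by (i) expanding the three Jack polynomials by the branching rule and reducing to a combinatorial identity together with a beta-integral evaluation of $\int_y^1 w^{k}W(w)\,dw$, or --- the route I develop below --- by (ii) turning both sides into contour integrals.

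For route (ii), apply Theorem~\ref{jackthm2} to each factor on the left and set $F(z):=\prod_{i=1}^{N}\Gamma\big(\lambda_i+\theta(N-i)-z\big)/\Gamma\big(\lambda_i+\theta(N-i+1)-z\big)$; the left-hand side becomes
\[
\frac{\Gamma(\theta N)^{2}}{\big((1-x)(1-y)\big)^{\theta N-1}}\,\frac{1}{(2\pi\ii)^{2}}\int_{\CC^{+}}\!\!\int_{\CC^{+}} x^{z_{1}}y^{z_{2}}\,F(z_{1})F(z_{2})\,dz_{1}\,dz_{2},
\]
which is the target. For the two-variable character on the right I would use the $m=2$ case of a several-variable analogue of Theorem~\ref{jackthm2} (either already in the paper or obtained by its method): I expect it to write $J_\lambda(u,v;N,\theta)$ as an explicit elementary function of $(u,v)$ --- involving a Vandermonde factor $1/(u-v)$ and powers of $1-u$ and $1-v$ --- times the antisymmetrized double contour integral of $u^{z_{1}}v^{z_{2}}-u^{z_{2}}v^{z_{1}}$ against the antisymmetric kernel $(z_{1}-z_{2})F(z_{1})F(z_{2})$. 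Substituting $u=xw$, $v=y/w$, one has $u^{z_{1}}v^{z_{2}}=x^{z_{1}}y^{z_{2}}w^{\,z_{1}-z_{2}}$ and $1/(u-v)=w/(xw^{2}-y)$, so the weight factor $1-xw^{2}/y$ combines with the Vandermonde factor into the bare $-w/y$, while the remaining weight factors $(1/w-1)^{\theta-1}$ and $(1-y/w)^{\theta(N-1)-1}$ are precisely those vanishing at the endpoints $w=1$ and $w=y$ --- the hallmark of a beta integral. Interchanging the $w$-integration with the two contour integrations (licensed by the uniform decay of $F$ along $\CC^{+}$ that underlies Theorem~\ref{jackthm2}) reduces the right-hand side to a double contour integral against $(z_{1}-z_{2})F(z_{1})F(z_{2})$, weighted by a single one-dimensional integral $\int_{y}^{1}w^{\,z_{1}-z_{2}}\widehat W(w)\,dw$ and its $z_{1}\leftrightarrow z_{2}$ partner, where $\widehat W$ collects all the surviving elementary $w$-dependence.

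The proof then hinges on this one-dimensional integral. After the substitution $s=(1-w)/(1-y)$, which carries $[y,1]$ onto $[0,1]$ and normalizes the two boundary-vanishing factors, it takes the Euler form $\int_{0}^{1}s^{\theta-1}(1-s)^{\theta(N-1)-1}g(s)\,ds$ with $g$ elementary, whose leading term is the beta integral $\int_{0}^{1}s^{\theta-1}(1-s)^{\theta(N-1)-1}\,ds=\Gamma((N-1)\theta)\Gamma(\theta)/\Gamma(N\theta)$ --- exactly the reciprocal of the constant $\Gamma(N\theta)/\big(\Gamma((N-1)\theta)\Gamma(\theta)\big)$ in the statement, which is the strongest sign that this is the right path. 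The remaining work --- and the main obstacle --- is computational: evaluating this ${}_2F_1$-type integral, checking that the antisymmetrization in $z_{1}\leftrightarrow z_{2}$ (possibly after discarding a term killed by $\int_{\CC^{+}}F(z)\,dz=0$ for $N\ge2$) collapses the double contour integral back to $x^{z_{1}}y^{z_{2}}F(z_{1})F(z_{2})$, and tracking every constant and every power of $x,y,1-x,1-y,y^{\theta}$ until the two sides coincide; the analytic points (absolute convergence, Fubini, contour independence) are routine given the estimates in hand. As a consistency check and as the reason for the name: at $\theta=1$ the identity reads $\int_{U(N)}\chi_\lambda\big(\mathrm{diag}(x,1^{N-1})\,g\,\mathrm{diag}(y,1^{N-1})\,g^{-1}\big)\,dg=\chi_\lambda(1^N)^{-1}\chi_\lambda\big(\mathrm{diag}(x,1^{N-1})\big)\chi_\lambda\big(\mathrm{diag}(y,1^{N-1})\big)$, where the matrix in the integrand is a rank-two perturbation of the identity whose two nontrivial eigenvalues can be written $(xw,y/w)$ and $W(w)\,dw/w^{2}$ is the density of their joint law under Haar measure --- a Dixon--Anderson (Selberg-type) integral, whose $\theta$-deformation is exactly the $w$-integral above. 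This is the duality with the Pieri rule for Jack polynomials advertised in the introduction, and the argument runs for all $\theta>0$ in the virtual-Gelfand-pair sense.
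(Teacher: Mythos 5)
There is a genuine gap, and it sits at the heart of your route (ii): the two-variable integral representation you invoke for $J_{\lambda}(u,v;N,\theta)$ --- an elementary prefactor with a Vandermonde-type factor $1/(u-v)$ times an antisymmetrized double contour integral of $u^{z_1}v^{z_2}-u^{z_2}v^{z_1}$ against the kernel $(z_1-z_2)F(z_1)F(z_2)$ --- is not in the paper and is not available for general $\theta>0$. That determinantal structure is special to $\theta=1$ (Schur polynomials, as in Gorin--Panova); the introduction explicitly stresses that the multiplicative-type formulas for Macdonald characters do \emph{not} degenerate to the Jack setting for general $\theta$, and this failure is precisely the reason the Pieri integral formula is introduced as a replacement. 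So your plan assumes as input a formula whose validity for arbitrary $\theta$ is an open assertion at best, and whose precise $\theta$-deformed kernel you would first have to discover and prove. Moreover, the step you label ``computational'' --- showing that the $w$-integral $\int_y^1 w^{z_1-z_2}\widehat W(w)\,dw$, after antisymmetrization in $z_1\leftrightarrow z_2$, collapses the double contour integral back to $x^{z_1}y^{z_2}F(z_1)F(z_2)$ --- is not a routine ${}_2F_1$ evaluation: for $\lambda$-independence to come out you would be proving an identity essentially equivalent to the theorem itself, so the argument as outlined is circular in weight even granting the missing representation. The matching of the beta-integral constant $\Gamma(\theta)\Gamma((N-1)\theta)/\Gamma(N\theta)$ is a consistency check, not evidence that the intermediate kernel identity holds.

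For comparison, the paper proves Theorem \ref{thm:pieri} (of which the statement is the $m=1$ case) by a different mechanism that avoids any multivariate contour representation: it starts from the Pieri rule for Macdonald polynomials (Theorem \ref{thm:pierirule}), rewrites it via the index--argument symmetry (Theorem \ref{symmetry}) as an identity in which the products $P_{\nu}(\cdot;N,q,t)$ of Macdonald characters appear, evaluates the structure coefficients explicitly through the evaluation identity (Theorem \ref{evaluation}), and then performs the limit $t=q^{\theta}$, $q=e^{-\epsilon}\to1$ with $\mu_i=\lfloor\epsilon^{-1}y_i\rfloor$, $k=\lfloor\epsilon^{-1}y\rfloor$, under which the discrete sum over interlacing signatures becomes a Riemann sum converging (with the factor $\epsilon^m$ supplied by the coefficient asymptotics) to the $m$-dimensional integral; a final change of variables and index-stability extend the result from $\GTp_N$ to $\GT_N$. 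If you want a self-contained proof along your lines, the honest path would be to either specialize the paper's argument to $m=1$, or first establish (and prove) the $\theta$-deformed two-variable representation you are positing --- which is a substantial result in its own right, not an ingredient you may assume.
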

At first sight, the identity above is mysterious because the left side is symmetric in $x, y$, whereas the right side does not seem to be.
But in fact, the left side is not symmetric in $x, y$ either: the condition $1 > y > x > 0$ cannot be dropped in the statement of the theorem (otherwise the integrand would not be well-defined for any $\theta > 0$).

We call the formula in Theorem $\ref{thm:pierisample}$, as well as its general version in Theorem $\ref{thm:pieri}$, the Pieri integral formula for Jack characters.
The reason is that these formulas are, in a sense, ``dual'' to the well known Pieri rule for Jack polynomials. When $\theta = 1$, the Pieri integral formula computes certain structure coefficients of the hypergroup of conjugacy classes of the group of unitary matrices $U(N)$. There is a similar group interpretation for the special values $\theta = \frac{1}{2}, 2$. From this viewpoint, for a general $\theta > 0$, the Pieri integral formula amounts to the computation of certain structure coefficients for a ``virtual hypergroup'' of conjugacy classes, see Section $\ref{sec:olshanski}$ for more comments in this direction.

Let us say a few more words about the duality between the Pieri integral formula and the Pieri rule. The following is the well known argument-index symmetry for Macdonald polynomials $P_{\lambda}(x_1, \ldots, x_N; q, t)$:
\begin{equation*}
\frac{P_{\lambda}(q^{\mu_1}t^{N-1}, q^{\mu_2}t^{N-2}, \ldots, q^{\mu_N}; q, t)}{P_{\lambda}(t^{N-1}, t^{N-2}, \ldots, 1; q, t)} = \frac{P_{\mu}(q^{\lambda_1}t^{N-1}, q^{\lambda_2}t^{N-2}, \ldots, q^{\lambda_N}; q, t)}{P_{\mu}(t^{N-1}, t^{N-2}, \ldots, 1; q, t)},
\end{equation*}
for any partitions $\lambda, \mu$ of lengths $\leq N$. Thus identities for Macdonald polynomials \textit{come in pairs}; this observation was heavily used by Andrei Okounkov in \cite{Ok1}.

Instead of considering such symmetry for partitions of lengths $\leq N$, fix some $m < N$ and consider only partitions of lengths $\leq m$ (there are also some technical details having to do with analytic continuation), we are able to pair identities of Macdonald polynomials and of Macdonald characters of $m$ variables.
This observation allowed us to obtain integral representations for Macdonald characters of one variable and multiplicative formulas as duals to the closed form for the generating function of one-row Macdonald polynomials and to the Jacobi-Trudi formula for Macdonald polynomials respectively, see \cite{C1} for the details.

\begin{eqnarray*}
\minibox[frame]{Generating function for \\ one-row Macd. polys.} &\xleftrightarrow{\text{duality}}& \minibox[frame]{Integrals reps. for Macd. \\ characters of $1$ variable} \xrightarrow{\textrm{degenerates}} \minibox[frame]{Integrals reps. for Jack \\ characters of $1$ variable}\\
\minibox[frame]{Jacobi Trudi formula \\ for Macdonald polys.} &\xleftrightarrow{\text{duality}}& \minibox[frame]{Multiplicative formulas \\ for Macdonald polys.} \xrightarrow{\textrm{degenerates}} \minibox[frame]{Cannot degenerate easily \\ (unless $\theta = 1$ or $m = 2$)}
\end{eqnarray*}

A natural question is to ask for a dual formula to the Pieri rule for Macdonald polynomials.
The answer is given by an $(m - 1)$-dimensional $q$-integral, but nonetheless it is unsatisfactory.
Even though it does express the product $P_{\lambda}(x_1, \ldots, x_{m-1}; N, q, t)\cdot P_{\lambda}(x; N, q, t)$ as a convex combination of Macdonald characters $\{ P_{\lambda}(x_1w_1, \ldots, x_{m-1}w_{m-1}, x(w_1\cdots w_{m-1})^{-1}; N, q, t) : (w_1, \dots, w_{m-1})\in V_x\}$, it is difficult to obtain asymptotic results from such a formula because the probability measure over $V_x$ that determines the convex combination does not concentrate at a single point as $N$ tends to infinity.
It is only after we degenerate the $q$-integral to the Jack setting that we are able to see an actual integral formula, as that of Theorem $\ref{thm:pierisample}$.
Moreover, in the Jack setting, the probability measures expressing the mixtures do concentrate at a single point, as we will discuss shortly.

\begin{eqnarray*}
\minibox[frame]{Pieri rule for\\ Macdonald polys.} &\xleftrightarrow{\text{duality}}& \minibox[frame]{The sum $(\ref{proof:pieri2})$ below, which \\ is a $q$-integral in disguise} \xrightarrow{\textrm{degenerates}} \minibox[frame]{Pieri integral formula}
\end{eqnarray*}

To prove the usefulness of our formulas, which look complicated at first, we study the asymptotics of Jack characters when the sequence of signatures $\{\lambda(N)\}_{N \geq 1}$ that parametrize them have a limit in the sense of Vershik-Kerov \cite{OO1, VK}.
Next we describe our asymptotic result in more detail.

\subsection{Asymptotics of Jack characters}\label{sec:asymptoticjacks}

Given a growing sequence of signatures $\{\lambda(N)\}_{N \geq 1}$ and an arbitrary $m\in\N$, the formulas from Theorems $\ref{thm:integralsample}$, $\ref{thm:pierisample}$, and their general versions in the text, allow us to study uniform limits of Jack characters $\{J_{\lambda(N)}(z_1, \ldots, z_m; N, \theta)\}_{N \geq 1}$.
The general strategy is the following. For $m = 1$, the limits can be studied by using the integral representations of Jack characters with one variable, cf. Theorem $\ref{thm:integralsample}$. For general $m$, we apply induction and the Pieri integral formula for Jack characters, cf. Theorem $\ref{thm:pierisample}$. For instance, when $m = 2$, the idea is to prove the following approximation of probability measures, as $N$ goes to infinity:

\begin{equation*}
\begin{gathered}
\mathbf{1}_{\{w\in [y, 1]\}}\cdot\frac{\Gamma(N\theta)y^{\theta}}{\Gamma((N-1)\theta)\Gamma(\theta)((1-x)(1 - y))^{\theta}}\left(1 - \frac{xw^2}{y} \right)\left(1 - \frac{xw}{y}\right)^{\theta - 1} \left(\frac{1}{w} - 1\right)^{\theta - 1}\\
\times\left( \frac{(1 - xw)(1 - \frac{y}{w})}{(1-x)(1-y)} \right)^{\theta(N-1)-1} \frac{dw}{w^2} \approx \delta_{\{w = 1\}}, \textrm{ as } N \rightarrow\infty,
\end{gathered}
\end{equation*}
where $\mathbf{1}_{\{w\in [y, 1]\}}$ is an indicator function, the approximation symbol $\approx$ should be interpreted appropriately, and $\delta_{\{w = 1\}}$ is the delta mass at $w = 1$. Then the Pieri integral formula would show $J_{\lambda(N)}(x; N, \theta)\cdot J_{\lambda(N)}(y; N, \theta) \approx J_{\lambda(N)}(x, y; N, \theta)$ and we are able to find asymptotics of Jack characters of two variables in terms of the asymptotics of Jack characters of one variable.

In the specific limit regime that we consider, we have $\lambda(N) \in \GT_N$ for each $N \geq 1$, i.e., $\lambda(N) = (\lambda_1(N) \geq \lambda_2(N) \geq \ldots \geq \lambda_N(N)) \in \Z^N$, and they grow to infinity in the sense of Vershik-Kerov. This is explained next.

For $\lambda(N)\in\GT_N$, let $p$ be any index such that $\lambda_p(N) \geq 0 \geq \lambda_{p+1}(N)$ and let $q = N-p$. Then if we write $\lambda(N) = (\lambda^+_1(N) \geq \dots \geq \lambda_p^+(N) \geq -\lambda_q^-(N) \geq \dots \geq -\lambda_1^-(N))$, we see that $\lambda^+(N)$ and $\lambda^-(N)$ are partitions of length $\leq N$. Let $d(\lambda^{\pm}(N))$ be the lengths of the main diagonals of the Young diagrams of $\lambda^{\pm}(N)$, and let $(a_1^{\pm}(N), a_2^{\pm}(N), \ldots)$, $(b_1^{\pm}(N), b_2^{\pm}(N), \ldots)$ be their modified Frobenius coordinates:
\begin{equation*}
a_i^{\pm}(N) \myeq \lambda^{\pm}(N)_i - i + \frac{1}{2}, \hspace{.2in} b_i^{\pm}(N) \myeq \lambda^{\pm}(N)'_i - i + \frac{1}{2}, \hspace{.3in}\forall\ 1\leq i\leq d(\lambda^{\pm}(N)).
\end{equation*}
We denoted $\lambda^{\pm}(N)_i'$ the $i$-th part of the partition that is conjugate to $\lambda^{\pm}(N)$, or $\lambda^{\pm}(N)'_i = |\{j \geq 1 : \lambda^{\pm}(N)_j \geq i\}|$. Observe that $\sum_{i=1}^{d(\lambda^{\pm}(N))}{(a_i^{\pm}(N) + b_i^{\pm}(N))} = |\lambda^{\pm}(N)|$.

The sequence $\{\lambda(N)\}_{N\geq 1}$ is a \textit{Vershik-Kerov sequence} if the following limits exist and are finite
\begin{eqnarray*}
\lim_{N\rightarrow\infty}{\frac{a_i^{\pm}(N)}{N}} &=& \alpha_i^{\pm} \hspace{.1in} \forall i = 1, 2, \ldots\\
\lim_{N\rightarrow\infty}{\frac{b_i^{\pm}(N)}{N}} &=& \beta_i^{\pm} \hspace{.1in} \forall i = 1, 2, \ldots\\
\lim_{N\rightarrow\infty}{\frac{|\lambda^{\pm}(N)|}{N}} &=& \delta^{\pm}.
\end{eqnarray*}
Evidently $\alpha_1^{\pm} \geq \alpha_2^{\pm} \geq \ldots \geq 0$, $\beta_1^{\pm} \geq \beta_2^{\pm} \geq \ldots \geq 0$, $\beta_1^+ + \beta_1^- \leq 1$ and $\delta^{\pm} \geq \sum_{i=1}^{\infty}{(\alpha_i^{\pm} + \beta_i^{\pm})}$.
It is convenient to denote $\gamma^{\pm}  = \delta^{\pm} - \sum_{i=1}^{\infty}{(\alpha_i^{\pm} + \beta_i^{\pm})}$.
We say $\omega = (\alpha^+, \alpha^-, \beta^+, \beta^-, \gamma^+, \gamma^-)\in\R_{\geq 0}^{4\infty + 2}$ is the \textit{boundary point} of $\{\lambda(N)\}_{N\geq 1}$, and write $\omega = (\alpha^{\pm}, \beta^{\pm}, \gamma^{\pm})$.

The main theorem of the second part of the paper is the following.

\begin{thm}[Theorem $\ref{thm:application1}$ below]\label{thm:applicationjacks}
Assume that $\{\lambda(N)\}_{N \geq 1}$ is a Vershik-Kerov sequence with boundary point $\omega = (\alpha^{\pm}, \beta^{\pm}, \gamma^{\pm})$. Then
\begin{equation}\label{limitOkOl}
\lim_{N\rightarrow\infty}{J_{\lambda(N)}(z_1, \ldots, z_m; N, \theta)} = \prod_{i=1}^m{\Psi(z_i; \omega; \theta)}
\end{equation}
holds uniformly on $z_1, \ldots, z_m\in\{z\in\C : 1 - \delta < |z| < 1 + \delta \}$, for some small $\delta > 0$, and $\Psi(z; \omega, \theta)$ is a holomorphic function on a neighborhood of the unit circle, given by the expression
\begin{equation*}
\Psi(z; \omega, \theta) := \exp\left(\gamma^+(z - 1) + \gamma^-(z^{-1} - 1)\right)\cdot\prod_{i=1}^{\infty}{\frac{(1 + \beta_i^+(z-1))}{(1 - \alpha_i^+(z-1)/\theta)^{\theta}}\frac{(1 + \beta_i^-(z^{-1}-1))}{(1 - \alpha_i^-(z^{-1}-1)/\theta)^{\theta}}}.
\end{equation*}
\end{thm}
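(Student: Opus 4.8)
The plan is to induct on the number of variables $m$, using the one-variable integral representations (Theorem~\ref{thm:integralsample} and its cousins) for the base case $m=1$ and the Pieri integral formula (Theorem~\ref{thm:pierisample}, general version Theorem~\ref{thm:pieri}) for the inductive step. For $m=1$, I would substitute the Vershik--Kerov hypotheses into the contour integral: after the change of variables $z = Nu$, the integrand becomes $\exp(N \cdot S_N(u))$ where $S_N(u) \to S(u)$ with $S(u) = u\log x - \sum_i\log\frac{\ldots}{\ldots}$ assembled from the limiting Frobenius data $\alpha_i^{\pm},\beta_i^{\pm},\gamma^{\pm}$. One then applies a steepest-descent / saddle-point analysis (or, more elementarily, dominated convergence after checking that the prefactor $\Gamma(\theta N)/(1-x)^{\theta N - 1}$ exactly cancels the exponential growth) to extract $\Psi(z;\omega,\theta)$; the ratios of Gamma functions $\Gamma(\lambda_i + \theta(N-i)-z)/\Gamma(\lambda_i+\theta(N-i+1)-z)$ are handled by Stirling, and the product over $i$ converges because $\sum(\alpha_i^{\pm}+\beta_i^{\pm}) \le \delta^{\pm} < \infty$. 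I would first establish convergence for $|z|<1$ (or $|z|>1$ by the obvious symmetry $z\leftrightarrow z^{-1}$), then upgrade to a uniform neighborhood of the unit circle by a normal-families / Vitali argument once locally uniform bounds are in hand.

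For the inductive step, suppose the result holds for $m-1$ variables. The Pieri integral formula expresses
$J_{\lambda(N)}(x;N,\theta)\,J_{\lambda(N)}(x_1,\dots,x_{m-1};N,\theta)$
as an integral over $\U_x$ of $J_{\lambda(N)}(x_1w_1,\dots,x_{m-1}w_{m-1},x(w_1\cdots w_{m-1})^{-1};N,\theta)$ against an explicit probability density $\rho_N(w)\,dw$. The key analytic claim is that $\rho_N\,dw \Rightarrow \delta_{\{w=(1,\dots,1)\}}$ weakly as $N\to\infty$; this is exactly the concentration statement displayed in the introduction for $m=2$. Granting this, and using that the Jack characters $J_{\lambda(N)}(\cdot;N,\theta)$ are (by the $m$-variable inductive hypothesis together with the normal-families bound) uniformly bounded and equicontinuous on a fixed neighborhood of the torus, one passes to the limit inside the integral: the left side tends to $\Psi(x;\omega,\theta)\prod_{i=1}^{m-1}\Psi(x_i;\omega,\theta)$ by induction, while the right side tends to the value of the integrand at $w=(1,\dots,1)$, namely $\lim_N J_{\lambda(N)}(x_1,\dots,x_{m-1},x;N,\theta)$. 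Matching the two gives the product formula for $m$ variables. The uniform-neighborhood statement again follows by Vitali once we have pointwise convergence on a set with a limit point together with local uniform boundedness.

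To prove the concentration $\rho_N\,dw\Rightarrow\delta_{\{w=(1,\dots,1)\}}$, I would write $\rho_N(w) = C_N(x)\,g(w)\,h(w)^{\theta(N-1)-1}$ where $h(w) = \prod(1-x_iw_i)\cdots / \prod(1-x_i)$ is the ratio appearing in the Pieri kernel, $g$ collects the $N$-independent factors, and $C_N(x)$ is the normalizing constant with the $\Gamma(N\theta)/\Gamma((N-1)\theta)$ growth. One checks that $h(w)\le 1$ on $\U_x$ with equality only at $w=(1,\dots,1)$, and that $h$ has a nondegenerate maximum there; Laplace's method then shows the mass of $\rho_N$ escapes to any neighborhood of $(1,\dots,1)$, with the $\Gamma$-ratio prefactor providing precisely the polynomial normalization that Laplace's method demands. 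This Laplace-type estimate — showing the density genuinely concentrates and that the normalizing constant has the right order — is the step I expect to be the main obstacle, since one must handle the boundary behavior of $\U_x$ (the constraint region degenerates as $N$ grows only through the exponent, so the geometry is fixed but the integrand is sharply peaked near a corner of the region) and verify the cancellation of exponential factors carefully. Once that is in place, the rest is bookkeeping: assembling the limiting function, checking holomorphy of $\Psi$ on a neighborhood of the circle (immediate from absolute convergence of the defining product, since $\alpha_i^{\pm},\beta_i^{\pm}\to 0$), and invoking Vitali for the uniformity.
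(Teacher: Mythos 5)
Your plan is essentially the paper's proof: the base case $m=1$ is done by saddle-point analysis of the one-variable contour integral, the inductive step by the Pieri integral formula together with a Laplace-type concentration of the kernel at $w=(1,\dots,1)$ (the paper's two claims inside Proposition $\ref{prop:pointwiseconvergence}$), and the upgrade from pointwise convergence on a real set with limit points to uniform convergence on a complex neighborhood of the torus by Montel/Vitali and analytic continuation.

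One point in your write-up is misattributed and does need its own argument: the uniform boundedness and equicontinuity of the \emph{$m$-variable} characters $J_{\lambda(N)}(\cdot\,;N,\theta)$ on a fixed complex neighborhood of $\TT^m$ cannot come from the inductive hypothesis, which only concerns $m-1$ variables. The paper supplies this separately (Lemma $\ref{lem:boundednessjacks}$): using nonnegativity of the branching coefficients one bounds $|J_{\lambda(N)}(z_1,\dots,z_m;N,\theta)|\le J_{\lambda(N)}(|z_1|,\dots,|z_m|;N,\theta)$ and then dominates the latter by a finite sum of \emph{one-variable} characters evaluated at products $|z_1|^{\pm1}\cdots|z_m|^{\pm1}$, whose boundedness follows from the already-established $m=1$ convergence; Cauchy estimates then give the equicontinuity used to replace $J_{\lambda(N)}(x_1w_1,\dots,x(w_1\cdots w_m)^{-1})$ by $J_{\lambda(N)}(x_1,\dots,x_m,x)$ inside the concentrating integral. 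With that ingredient inserted, your outline matches the paper's argument.
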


\subsection{Comments about the limits of Jack characters}

For the values $\theta = \frac{1}{2}, 1, 2$, there is a representation-theoretic implication of the asymptotic statement above.
This comes from the fact that Jack characters $P_{\lambda(N)}(z_1, \ldots, z_N; N, \theta)$, for $\theta = \frac{1}{2}, 1$ and $2$, represent normalized, irreducible spherical functions of the Riemannian symmetric spaces $G(N)/K(N)$, for $U(N)/O(N)$, $U(N)\times U(N)/U(N)$ and $U(2N)/Sp(N)$, respectively.
The statement in Theorem $\ref{thm:applicationjacks}$ shows how (normalized and irreducible) spherical functions of $G(N)/K(N)$ approximate the \textit{irreducible spherical functions of the infinite-dimensional pairs} $(G(\infty), K(\infty))$, $G(\infty) = \lim_{\rightarrow}{G(N)}$, $K(\infty) = \lim_{\rightarrow}{K(N)}$, when specialized at $\theta = \frac{1}{2}, 1, 2$.
The reader is referred to \cite{OO1}, and references therein, for more details.

The limit $(\ref{limitOkOl})$ above was proved by Andrei Okounkov and Grigori Olshanski in \cite{OO1}, except that they proved the convergence on the unit circle $z_1, \ldots, z_m\in\TT = \{z \in \C : |z| = 1\}$, and not on an open neighborhood of $\TT$.
Moreover, they proved certain converse statement. In theory, such converse may be proved with our formulas, but we do not do it here. In comparison to their proof, ours is more direct and the explicit formulas for Jack characters can be readily used to study the asymptotics of Jack characters in other limit regimes.

Finally, we remark that several other proofs of Theorem $\ref{thm:applicationjacks}$ exist in the literature in the special case $\theta = 1$, see \cite{BOl, GP, Pe, VK}. All of the methods of proof make heavy use of the determinantal structure of Schur polynomials when $\theta = 1$, and thus they cannot generalize for an arbitrary $\theta > 0$.

\subsection{Further research}

The formulas obtained in \cite{C1} and the present paper allow us to study various limit regimes for Macdonald characters and Jack characters, with a finite number of variables, as the rank tends to infinity. In these papers, we have proved the strength of these formulas in asymptotic representation theory, but we believe that there are applications in other areas. In fact, it would not be surprising that some of the various applications of the formulas of this paper for $\theta = 1$, e.g. \cite{G0, GP, BuG1, BuG2, Pa}, admit a one-parameter $\theta > 0$ degeneration.

As a different application of our toolbox, we studied a \textit{Jack-Gibbs} model of lozenge tilings that is defined in the spirit of \cite{BGG, GS}. The author was able to prove the weak convergence of statistics of the Jack-Gibbs lozenge tilings model near the edge of the boundary (the so-called \textit{turning point} asymptotics) to the \textit{Gaussian beta ensemble}, see e.g. Forrester \cite[Ch. 20]{Ox} and references therein. This result, and its rational limit concerning corner processes of Gaussian matrix ensembles, will appear in a forthcoming publication.

\subsection{Organization of the paper}

The first part of the paper proves several formulas for Jack characters that are suitable for asymptotics. In Section $\ref{sec:macdonaldsection}$, we briefly recall some algebraic properties of Macdonald and Jack polynomials. In Section $\ref{sec:jackintegral}$, we degenerate the integral representations for Macdonald characters of one variable and obtain analogous representations for Jack characters of one variable. Next in Section $\ref{sec:pieri}$, we state and prove the Pieri integral formula.

In the second part of the paper, we work out the asymptotics of Jack characters of a given fixed number of variables $m\in\N$, in a specific limit regime. The main theorem of the second part of the paper, as well as an overview of its proof, is given in Section $\ref{sec:appjack1}$. Later in Section $\ref{sec:casem1}$, we make use of the integral representations of Jack characters to deal with the case $m = 1$. The case of general $m\in\N$ is proved in Section $\ref{sec:proofthm}$.

\subsection*{Acknowledgments}

I wish to express my deep gratitude to Alexei Borodin and Vadim Gorin, for many helpful discussions and continued encouragement. I am grateful to Grigori Olshanski for explaining to me his viewpoint on the Pieri intergral formula, which is given in Section $\ref{sec:olshanski}$. I'm also grateful to the referee for valuable suggestions.

\section{Macdonald and Jack polynomials}\label{sec:macdonaldsection}

All the notions and results that we need about Macdonald and Jack polynomials are in \cite{M}. We briefly review some material in order to set our terminology and also to introduce lesser known concepts, such as Jack Laurent polynomials.

\subsection{Partitions, signatures and symmetric (Laurent) polynomials}

A \textit{partition} is a finite sequence of weakly decreasing nonnegative integers $\lambda = (\lambda_1 \geq \lambda_2 \geq \ldots\geq \lambda_k)\in\Z_{\geq 0}^k$. We identify partitions that differ by trailing zeroes, and we often assume a partition has as many zeroes as we want at the end, for instance, $(4, 2, 2, 0, 0)$ and $(4, 2, 2)$ are the same partition. The \textit{size} of $\lambda$ is the sum $|\lambda| \myeq \lambda_1 + \ldots + \lambda_k$, and the \textit{length} $\ell(\lambda)$ is the number of strictly positive elements of $\lambda$.

Partitions can be graphically represented by their \textit{Young diagrams}, see Figure $\ref{youngdiagram1}$. The (main) diagonal of a Young diagram is its set of squares with coordinates of the form $(k, k)$. The diagonal length of a partition $\lambda$, denoted $d(\lambda)$, is the cardinality of the main diagonal of its Young diagram.

A \textit{signature} is a sequence of weakly decreasing integers $\lambda = (\lambda_1 \geq \lambda_2 \geq \ldots\geq \lambda_k)\in\Z^k$. A \textit{positive signature} is a signature whose elements are all nonnegative. The \textit{length} of a signature, or positive signature, is the number $k$ of elements of it. Positive signatures which differ by trailing zeroes are not identified, in contrast to partitions; for example, $(4, 2, 2, 0, 0)$ and $(4, 2, 2)$ are different positive signatures of lengths $5$ and $3$, respectively. We shall denote $\GT_N$ (resp. $\GTp_N$) the set of signatures (resp. positive signatures) of length $N$. Evidenty $\GTp_N$ can be identified with the set of all partitions of length $\leq N$. Under this identification, we are allowed to talk about the Young diagram of a positive signature $\lambda\in\GTp_N$, its size and other attributes that are typically associated to partitions. Note, however, that length is defined differently for partitions and for positive signatures.

\begin{figure}
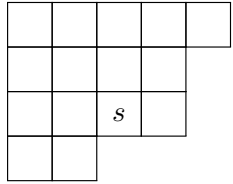

\ytableausetup{centertableaux}
\begin{ytableau}
\ &  &  & & \\
 &  &  & \\
 & & s & \\
&
\end{ytableau}
\caption{Young diagram for the partition $\lambda = (5, 4, 4, 2)$. Diagonal length of $\lambda$ is $3$. Square $s = (3, 3)$ in the main diagonal has arm length, arm colength, leg length and leg colength given by $a(s) = 1$, $a'(s) = 2$, $l(s) = 0$, $l'(s) = 2$}
\label{youngdiagram1}
\end{figure}

Fix $N\in\N$ and any field $F$. Denote by $\Lambda_F[x_1, \ldots, x_N]$ the algebra of symmetric polynomials in $x_1, \ldots, x_N$ and with coefficients in $F$. Denote also by $\Lambda_F[x_1^{\pm}, \ldots, x_N^{\pm}]$ the algebra of symmetric Laurent polynomials in $x_1, \ldots, x_N$. For any partition $\lambda$, denote the corresponding monomial symmetric polynomial by $m_{\lambda}(x_1, \ldots, x_N)$. The set $\{m_{\lambda}(x_1, \ldots, x_N)\}_{\lambda}$, where $\lambda$ varies over all partitions of length $\leq N$ is an $F$-linear basis of $\Lambda_F[x_1, \ldots, x_N]$. From the discussion in the previous paragraph we can parametrize monomial symmetric polynomials by elements of $\GTp_N$ instead of partitions of length $\leq N$.

The monomial symmetric polynomial $m_{\lambda}$ is homogeneous of degree $|\lambda|$. Moreover they satisfy the following \textit{index-stability property}
\begin{equation}\label{indexstabilitymonomials}
m_{(\lambda_1 + 1, \ldots, \lambda_N + 1)}(x_1, \ldots, x_N) = (x_1 \cdots x_N)\cdot m_{\lambda}(x_1, \ldots, x_N).
\end{equation}

We can then define monomial symmetric Laurent polynomials $m_{\lambda}(x_1, \ldots, x_N)$ for any signature $\lambda\in\GT_N$. Indeed for any $\lambda\in\GT_N$, find $M\in\N$ large enough so that $(\lambda_1 + M, \ldots, \lambda_N + M)\in\GTp_N$ and define
\begin{equation*}
m_{\lambda}(x_1, \ldots, x_N) \myeq (x_1\cdots x_N)^{-M} \cdot m_{(\lambda_1 + M, \ldots, \lambda_N + M)}(x_1, \ldots, x_N).
\end{equation*}
Because of the index-stability $(\ref{indexstabilitymonomials})$, the definition above does not depend on the choice of $M$. The set $\{m_{\lambda}(x_1, \ldots, x_N) : \lambda\in\GT_N\}$ is an $F$-linear basis of $\Lambda_F[x_1^{\pm}, \ldots, x_N^{\pm}]$.

\subsection{Macdonald polynomials}\label{macdonaldpolysection}

Let $N\in\N$, also consider variables $q, t$ and let $F = \C(q, t)$. For any partition $\lambda$ of length $\ell(\lambda) \leq N$, the \textit{Macdonald polynomial} $P_{\lambda}(x_1, \dots, x_N; q, t)$ is defined as the unique homogeneous, symmetric polynomial of degree $|\lambda|$, such that
\begin{equation*}
P_{\lambda}(x_1, \dots, x_N; q, t) = x_1^{\lambda_1}\cdots x_N^{\lambda_N} + \textrm{ terms $c_{\mu}x_1^{\mu_1}\cdots x_N^{\mu_N}$ where $\mu \leq \lambda$ in the lexicographic order},
\end{equation*}
and satisfying also certain orthogonality property that we do not need here, see \cite[Ch. VI]{M} for the details. When $t = q$, the Macdonald polynomials become the well known Schur polynomials.

If $N < \ell(\lambda)$, we set $P_{\lambda}(x_1, \ldots, x_N; q, t) \myeq 0$ for convenience.

When we are talking about Macdonald polynomials and some of their properties which hold regardless of the number $N$ of variables (as long as $N$ is large enough), we simply write $P_{\lambda}(q, t)$ instead of $P_{\lambda}(x_1, \ldots, x_N; q, t)$.

Like the monomial symmetric polynomials, we have the following index-stability property:
\begin{equation}\label{macdonaldsignatures1}
P_{(\lambda_1 + 1, \ldots, \lambda_N + 1)}(x_1, \ldots, x_N; q, t) = (x_1\cdots x_N)\cdot P_{\lambda}(x_1, \ldots, x_N; q, t).
\end{equation}

As pointed out before, the set of partitions of length $\leq N$ is in bijection with $\GTp_N$ and thus we can index the Macdonald polynomials by positive signatures rather than by partitions. We can now introduce Macdonald (Laurent) polynomials $P_{\lambda}(x_1, \ldots, x_N; q, t)$ for any $\lambda\in\GT_N$ as follows. Let $\lambda\in\GT_N$ be arbitrary. If $\lambda_N \geq 0$, then $\lambda\in\GTp_N$ and $P_{\lambda}(x_1, \ldots, x_N; q, t)$ is already defined. If $\lambda_N < 0$, then choose $M\in\N$ large enough so that $\lambda_N + M \geq 0$ and so $(\lambda_1 + M, \ldots, \lambda_N + M)\in\GTp_N$. Then define
\begin{equation}\label{macdonaldsignatures11}
P_{\lambda}(x_1, \ldots, x_N; q, t) \myeq (x_1\cdots x_N)^{-M}\cdot P_{(\lambda_1 + M, \ldots, \lambda_N + M)}(x_1, \ldots, x_N; q, t).
\end{equation}
By virtue of the index-stability property $(\ref{macdonaldsignatures1})$, the Macdonald Laurent polynomial $P_{\lambda}(x_1, \ldots, x_N; q, t)$ is well-defined and does not depend on the choice of $M$.

Recall that the \textit{arm-length, arm-colength, leg-length, leg-colength} $a(s), a'(s), l(s), l'(s)$ of the square $s = (i, j)$ of the Young diagram of $\lambda$, are given by $a(s) = \lambda_i - j$, $a'(s) = j - 1$, $l(s) = \lambda_j' - i$, $l'(s) = i - 1$.
We have denoted $\lambda_j' = \left|\{ i \geq 1 : \lambda_i \geq j \}\right|$ the length of the $j$-th part of the \textit{conjugate partition} $\lambda'$, see Figure $\ref{youngdiagram1}$.

For $\lambda\in\bigsqcup_{N\geq 1}{\GTp_N}$, define the \textit{dual Macdonald polynomials} $Q_{\lambda}(q, t)$ by
\begin{equation}\label{PtoQ}
Q_{\lambda}(q, t) \myeq b_{\lambda}(q, t)P_{\lambda}(q, t), \hspace{.2in} b_{\lambda}(q, t) \myeq \prod_{s\in\lambda}{\frac{1 - q^{a(s)}t^{l(s) + 1}}{1 - q^{a(s) + 1}t^{l(s)}}}.
\end{equation}

For $n\geq 0$, define the \textit{complete homogeneous symmetric (Macdonald) polynomials} as the one-row dual Macdonald polynomials:
\begin{equation}\label{gtoQ}
g_n(q, t) \myeq Q_{(n)}(q, t) = \frac{(q; q)_n}{(t; q)_n}P_{(n)}(q, t).
\end{equation}
Above we used the well known $q$-analysis notation, \cite[Ch. 10]{AAR}, $(a; q)_n = (1 - a)(1 - aq)\cdots (1 - aq^{n-1})$ if $n \geq 1$, and $(a; q)_0 = 1$.

Next we state several important theorems on Macdonald polynomials, which will be our main tools.

\begin{thm}\label{symmetry}[Index-argument symmetry]
Let $N\in\N$, $\lambda, \mu\in\GTp_N$, then
\begin{equation*}
\frac{P_{\lambda}(q^{\mu_1}t^{N-1}, q^{\mu_2}t^{N-2}, \ldots, q^{\mu_N}; q, t)}{P_{\lambda}(t^{N-1}, t^{N-2}, \ldots, 1; q, t)} = \frac{P_{\mu}(q^{\lambda_1}t^{N-1}, q^{\lambda_2}t^{N-2}, \ldots, q^{\lambda_N}; q, t)}{P_{\mu}(t^{N-1}, t^{N-2}, \ldots, 1; q, t)}
\end{equation*}
\end{thm}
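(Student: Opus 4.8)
The plan is to derive the index-argument symmetry (Theorem~\ref{symmetry}) from the scalar-product characterization of Macdonald polynomials together with the explicit evaluation (principal specialization) formula, following Macdonald's treatment in \cite[Ch.~VI]{M}. The key observation is that the left-hand side, viewed as a function of the pair $(\lambda,\mu)\in\GTp_N\times\GTp_N$, can be written in terms of a single symmetric expression that is manifestly invariant under swapping $\lambda$ and $\mu$.

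First I would recall the two ingredients. The principal specialization formula states
\begin{equation*}
P_{\lambda}(1, t, t^2, \ldots, t^{N-1}; q, t) = t^{n(\lambda)}\prod_{s\in\lambda}\frac{1 - q^{a'(s)}t^{N - l'(s)}}{1 - q^{a(s)}t^{l(s)+1}},
\end{equation*}
where $n(\lambda) = \sum_i (i-1)\lambda_i$, and the more general evaluation gives $P_{\lambda}(q^{\mu_1}t^{N-1}, \ldots, q^{\mu_N}; q, t)$ in terms of the dual polynomials via the Cauchy-type reproducing kernel. Concretely, one uses the expansion of the Cauchy kernel $\prod_{i,j}\frac{(tx_iy_j; q)_\infty}{(x_iy_j;q)_\infty} = \sum_{\nu} P_\nu(x;q,t) Q_\nu(y;q,t)$ together with the specialization $y_j = q^{\mu_j} t^{N-j}$ (so that $x_i \mapsto q^{\mu_i}t^{N-i}$ becomes a principal-type specialization), and the fact that $Q_\nu$ evaluated at a geometric progression has a product form. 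The upshot, which is \cite[Ch.~VI, (6.6)]{M}, is that
\begin{equation*}
\frac{P_{\lambda}(q^{\mu_1}t^{N-1}, \ldots, q^{\mu_N}; q, t)}{P_{\lambda}(t^{N-1}, \ldots, 1; q, t)}
\end{equation*}
equals an explicit expression symmetric in $\lambda\leftrightarrow\mu$ — typically written as a sum over "$\lambda$-tableaux" or, after the dust settles, as a ratio of products of $(q,t)$-factors indexed by the Young diagrams of both partitions simultaneously.

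The cleanest route, and the one I would actually carry out, is: \textbf{(i)} normalize by passing to the $Q$-normalization and to the modified polynomials $\widetilde{P}$, so the reproducing kernel becomes symmetric; \textbf{(ii)} specialize the Cauchy identity in two variables sets of the form $q^{\lambda}t^{\rho}$ and $q^{\mu}t^{\rho}$ where $\rho = (N-1, N-2, \ldots, 0)$, which collapses each side of $\sum_\nu P_\nu Q_\nu$ to a single term because the principal specialization of $P_\nu$ at a truncated geometric progression vanishes unless $\ell(\nu)\le N$ and is otherwise an explicit product; \textbf{(iii)} read off that both $\frac{P_\lambda(q^\mu t^\rho)}{P_\lambda(t^\rho)}$ and $\frac{P_\mu(q^\lambda t^\rho)}{P_\mu(t^\rho)}$ equal the same quantity $\prod_{(i,j)} f_{ij}(\lambda,\mu;q,t)$ and hence each other. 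Since the statement is an identity of rational functions in $q,t$ over $\Q$, it suffices to prove it generically, so no convergence issues arise.

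\textbf{The main obstacle} is bookkeeping: the evaluation formula for $P_\lambda$ at a general geometric-type point $q^{\mu}t^{\rho}$ is not as transparent as the pure principal specialization, and making the $\lambda\leftrightarrow\mu$ symmetry visibly manifest requires carefully matching arm/leg data of $\lambda$ against those of $\mu$ across the two diagrams. In Macdonald's book this is packaged as the symmetry of the function $P_\lambda(q^\mu t^\rho; q,t)/P_\lambda(t^\rho;q,t)$ under interchanging the roles played by the row-coordinate data of $\lambda$ and $\mu$ — proved by induction on $|\lambda|$ using Pieri rules, or directly from the $q$-binomial expansion of the Macdonald kernel. I would simply invoke \cite[Ch.~VI, \S6]{M} for this symmetric evaluation and then note that the desired identity is its immediate consequence; reproving that evaluation from scratch is the only genuinely laborious part and is not needed here.
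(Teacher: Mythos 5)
The paper itself offers no proof of Theorem~\ref{symmetry}: it is quoted as a standard fact, namely the index--argument (evaluation) symmetry of Macdonald polynomials, \cite[Ch.~VI, (6.6)]{M}. So your final fallback --- simply invoking Macdonald's Ch.~VI, \S 6 --- is exactly what the paper does, and at that level your proposal is acceptable.

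However, the self-contained mechanism you say you ``would actually carry out'' does not work as described, and the gap is in step (ii). Specializing the Cauchy identity $\sum_{\nu} P_{\nu}(x;q,t)Q_{\nu}(y;q,t)=\prod_{i,j}\frac{(tx_iy_j;q)_{\infty}}{(x_iy_j;q)_{\infty}}$ at $x=q^{\lambda}t^{\rho}$, $y=q^{\mu}t^{\rho}$ does \emph{not} collapse the sum to a single term: the principal-type specialization $P_{\nu}(q^{\mu}t^{\rho};q,t)$ is generically nonzero for every $\nu$ with $\ell(\nu)\le N$ (the pure principal specialization $P_{\nu}(t^{\rho})$ never vanishes in that range), so all $\nu$ contribute and no identification of individual terms is possible. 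The vanishing phenomenon you seem to have in mind belongs to the \emph{shifted} (interpolation) Macdonald polynomials, where extra vanishing conditions at the points $q^{\mu}t^{\rho}$ do hold and do yield the symmetry via Okounkov's binomial formula; for the ordinary homogeneous $P_{\nu}$ there is no such collapse. The genuine proofs are either Macdonald's own argument (an induction matching coefficients via the Pieri/branching structure) or the route through interpolation polynomials and the binomial formula \cite{Ok1, OO0}; your parenthetical mention of ``induction on $|\lambda|$ using Pieri rules'' is the correct pointer, but the Cauchy-kernel collapse as written would fail if one tried to execute it. Since the statement is only used as an input in this paper, the clean fix is to drop the sketched derivation and cite \cite[Ch.~VI, (6.6)]{M} (or the interpolation-polynomial proof) outright.
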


\begin{thm}\label{evaluation}[Evaluation identity]
Let $m, N\in\N$, $N \geq m$, and let the positive signature $\lambda\in\GTp_N$ be of the form $\lambda = (\lambda_1 \geq \lambda_2 \geq \ldots \geq \lambda_m \geq 0 \geq \dots \geq 0)$, then
\begin{equation*}
\begin{gathered}
Q_{\lambda}(t^{N-1}, t^{N-2}, \ldots, 1; q, t) = t^{\lambda_2 + 2\lambda_3 + \dots + (m-1)\lambda_m}\cdot\prod_{i=1}^m{\frac{(t^{N-i+1}, q^{\lambda_i + 1}t^{m-i}; q)_{\infty}}{(q, q^{\lambda_i}t^{N-i+1}; q)_{\infty}}}\\
\times \prod_{1\leq i < j\leq m}{\frac{(q^{\lambda_i - \lambda_j + 1}t^{j-i-1}; q)_{\infty}}{(q^{\lambda_i - \lambda_j + 1}t^{j-i}; q)_{\infty}}},
\end{gathered}
\end{equation*}
where we used usual $q$-analysis notation $(x; q)_{\infty} = (1-x)(1-qx)(1-q^2x)\cdots$ and $(x_1, \dots, x_n; q)_{\infty} = \prod_{i=1}^n{(x_i; q)_{\infty}}$.
\end{thm}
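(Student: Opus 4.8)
\emph{Proof proposal.} The plan is to reduce the identity to the principal specialization formula for $P_\lambda$, convert $P_\lambda$ to $Q_\lambda$ via $b_\lambda$, and then carry out a purely combinatorial rearrangement of a product over the boxes of the Young diagram of $\lambda$. Concretely, I would start from the hook-product evaluation in \cite[Ch. VI]{M}: for a partition $\lambda$ with $\ell(\lambda)\le N$,
\begin{equation*}
P_\lambda(1, t, \ldots, t^{N-1}; q, t) = t^{n(\lambda)}\prod_{s\in\lambda}\frac{1 - q^{a'(s)}t^{N-l'(s)}}{1 - q^{a(s)}t^{l(s)+1}}, \qquad n(\lambda):=\sum_{i\ge 1}(i-1)\lambda_i.
\end{equation*}
Since $P_\lambda$ and $Q_\lambda$ are symmetric in their variables, the left-hand side equals $P_\lambda(t^{N-1},\ldots,1;q,t)$, and multiplying by $b_\lambda(q,t)=\prod_{s\in\lambda}\frac{1-q^{a(s)}t^{l(s)+1}}{1-q^{a(s)+1}t^{l(s)}}$ as in $(\ref{PtoQ})$ the factors $1-q^{a(s)}t^{l(s)+1}$ cancel, giving
\begin{equation*}
Q_\lambda(t^{N-1},\ldots,1;q,t) = t^{n(\lambda)}\cdot\frac{\prod_{s\in\lambda}(1-q^{a'(s)}t^{N-l'(s)})}{\prod_{s\in\lambda}(1-q^{a(s)+1}t^{l(s)})}.
\end{equation*}
For $\lambda$ of the form in the statement, $n(\lambda)=\lambda_2+2\lambda_3+\cdots+(m-1)\lambda_m$, which is the asserted prefactor. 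I would also note that each infinite $q$-Pochhammer symbol appearing below stands for a ratio $(A;q)_\infty/(Aq^k;q)_\infty=(A;q)_k$, a finite product, so the whole identity is an equality of Laurent polynomials in $q,t$ with no convergence issue.

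The numerator is easy: with $a'(s)=j-1$, $l'(s)=i-1$ for $s=(i,j)$ and grouping boxes by rows,
\begin{equation*}
\prod_{s\in\lambda}(1-q^{a'(s)}t^{N-l'(s)}) = \prod_{i=1}^m\prod_{j=1}^{\lambda_i}(1-q^{j-1}t^{N-i+1}) = \prod_{i=1}^m (t^{N-i+1};q)_{\lambda_i} = \prod_{i=1}^m\frac{(t^{N-i+1};q)_\infty}{(q^{\lambda_i}t^{N-i+1};q)_\infty},
\end{equation*}
which already supplies the $(t^{N-i+1};q)_\infty$ and $(q^{\lambda_i}t^{N-i+1};q)_\infty$ factors of the claim.

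The main work — and the step I expect to be the principal obstacle — is to put the denominator $\prod_{s\in\lambda}(1-q^{a(s)+1}t^{l(s)})$ (the classical quantity $c'_\lambda(q,t)$) into the stated product form. I would organize the boxes of row $i$ according to their leg length: a box $s=(i,j)$ has $a(s)+1=\lambda_i-j+1$ and $l(s)=\lambda'_j-i$, and $\lambda'_j=k$ exactly when $\lambda_{k+1}<j\le\lambda_k$ (set $\lambda_{m+1}:=0$). Hence row $i$ contributes $\prod_{k=i}^m\prod_{j=\lambda_{k+1}+1}^{\lambda_k}(1-q^{\lambda_i-j+1}t^{k-i}) = \prod_{k=i}^m\frac{(q^{\lambda_i-\lambda_k+1}t^{k-i};q)_\infty}{(q^{\lambda_i-\lambda_{k+1}+1}t^{k-i};q)_\infty}$, so that $\prod_{s\in\lambda}(1-q^{a(s)+1}t^{l(s)}) = \prod_{i=1}^m\prod_{k=i}^m\frac{(q^{\lambda_i-\lambda_k+1}t^{k-i};q)_\infty}{(q^{\lambda_i-\lambda_{k+1}+1}t^{k-i};q)_\infty}$. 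For each fixed $i$ the inner product over $k$ telescopes using $(q^{\lambda_i-\lambda_{k+1}+1}t^{k-i};q)_\infty = (1-q^{\lambda_i-\lambda_{k+1}+1}t^{k-i})(q^{\lambda_i-\lambda_{k+1}+1}t^{k-i+1};q)_\infty$, i.e. the $k$-th denominator is $(1-q^{\lambda_i-\lambda_{k+1}+1}t^{k-i})$ times the $(k+1)$-st numerator; this yields $\frac{(q;q)_\infty}{(q^{\lambda_i+1}t^{m-i};q)_\infty}\prod_{j=i+1}^m\frac{(q^{\lambda_i-\lambda_j+1}t^{j-i};q)_\infty}{(q^{\lambda_i-\lambda_j+1}t^{j-i-1};q)_\infty}$. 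Taking the product over $i$ gives exactly $\prod_{i=1}^m\frac{(q;q)_\infty}{(q^{\lambda_i+1}t^{m-i};q)_\infty}\prod_{1\le i<j\le m}\frac{(q^{\lambda_i-\lambda_j+1}t^{j-i};q)_\infty}{(q^{\lambda_i-\lambda_j+1}t^{j-i-1};q)_\infty}$. Substituting the numerator and denominator evaluations back and collecting the $q$-Pochhammer symbols reproduces the asserted formula. The only genuinely delicate point is the leg-length bookkeeping together with the telescoping; everything else is mechanical.
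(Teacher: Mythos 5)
Your overall derivation is correct, and it is essentially the only route available: the paper does not prove this statement but quotes it as a known specialization formula from Macdonald's book, and what you do — combine the principal specialization $P_\lambda(1,t,\dots,t^{N-1};q,t)=t^{n(\lambda)}\prod_{s\in\lambda}\frac{1-q^{a'(s)}t^{N-l'(s)}}{1-q^{a(s)}t^{l(s)+1}}$ with $Q_\lambda=b_\lambda P_\lambda$, evaluate the arm-colength product row by row, and regroup $c'_\lambda(q,t)=\prod_{s\in\lambda}(1-q^{a(s)+1}t^{l(s)})$ by leg length — is precisely the standard verification the paper leaves implicit. Your bookkeeping ($\lambda'_j=k$ exactly for $\lambda_{k+1}<j\le\lambda_k$, with $\lambda_{m+1}=0$), the prefactor $n(\lambda)=\lambda_2+2\lambda_3+\dots+(m-1)\lambda_m$, and the numerator computation are all right, and the final fixed-$i$ expression you state does reproduce the asserted formula.

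One local step is misstated, though the conclusion you draw from it is correct. The identity you invoke, $(q^{\lambda_i-\lambda_{k+1}+1}t^{k-i};q)_\infty=(1-q^{\lambda_i-\lambda_{k+1}+1}t^{k-i})(q^{\lambda_i-\lambda_{k+1}+1}t^{k-i+1};q)_\infty$, is false for generic $t$: the factorization $(x;q)_\infty=(1-x)(xq;q)_\infty$ shifts the power of $q$, not of $t$, so the $k$-th denominator is not $(1-\cdot)$ times the $(k+1)$-st numerator and nothing actually cancels there. The correct (and simpler) argument is pure re-indexing: in $\prod_{k=i}^m\frac{(q^{\lambda_i-\lambda_k+1}t^{k-i};q)_\infty}{(q^{\lambda_i-\lambda_{k+1}+1}t^{k-i};q)_\infty}$ substitute $j=k+1$ in the denominator product; the unmatched terms are $(q;q)_\infty$ (numerator, $k=i$) and $(q^{\lambda_i+1}t^{m-i};q)_\infty$ (denominator, $j=m+1$), while each matched pair $i<j\le m$ contributes the non-cancelling ratio $\frac{(q^{\lambda_i-\lambda_j+1}t^{j-i};q)_\infty}{(q^{\lambda_i-\lambda_j+1}t^{j-i-1};q)_\infty}$ — exactly the expression you wrote down. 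With that one justification repaired, your proof is complete.
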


For the next two statement, we need a couple of notions. Given positive signatures $\mu\in\GTp_N$, $\lambda\in\GTp_{N+1}$,  we say that they \textit{interlace} if
\begin{equation*}
\lambda_{N+1} \leq \mu_N \leq \lambda_N \leq \dots \leq \lambda_2 \leq \mu_1 \leq \lambda_1
\end{equation*}
and we denote it by $\mu\prec\lambda$. Also, for two interlacing signatures $\mu\in\GTp_N$, $\lambda\in\GTp_{N+1}$, $\mu\prec\lambda$, define the \textit{branching coefficient} $\psi_{\lambda/\mu}(q, t)$ by
\begin{equation}\label{psidef}
\psi_{\lambda/\mu}(q, t) \myeq \prod_{1\leq i \leq j\leq N}{\frac{(q^{\mu_i - \mu_j}t^{j-i+1}, q^{\lambda_i - \lambda_{j+1}}t^{j-i+1}, q^{\lambda_i - \mu_j+1}t^{j-i}, q^{\mu_i - \lambda_{j+1} + 1}t^{j-i}; q)_{\infty} }{(q^{\lambda_i - \mu_j}t^{j-i+1}, q^{\mu_i - \lambda_{j+1}}t^{j-i+1}, q^{\mu_i - \mu_j+1}t^{j-i}, q^{\lambda_i - \lambda_{j+1}+1}t^{j-i}; q)_{\infty} } }.
\end{equation}
Let us also agree that $\psi_{\lambda/\mu}(q, t) \myeq 0$ if $\mu\not\prec\lambda$, that is, if $\mu$ and $\lambda$ do not interlace.

\begin{thm}\label{branchingmacdonald}[Branching rule]
Let $N\in\N$, $\lambda\in\GTp_{N+1}$, then
\begin{equation*}
P_{\lambda}(x_1, \ldots, x_N, x_{N+1}; q, t) = \sum_{\mu \in\GTp_N}{\psi_{\lambda/\mu}(q, t)x_{N+1}^{|\lambda| - |\mu|}P_{\mu}(x_1, \ldots, x_N; q, t)}.
\end{equation*}
\end{thm}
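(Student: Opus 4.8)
The plan is to deduce the branching rule from the theory of skew Macdonald functions by specializing the comultiplication formula to a single extra variable. Work in the ring $\Lambda = \Lambda_{\Q(q,t)}$ of symmetric functions in countably many variables, equipped with the $(q,t)$-Hall pairing $\langle\cdot,\cdot\rangle$ for which $\{P_\lambda(q,t)\}$ and $\{Q_\lambda(q,t)\}$ are dual bases \cite[Ch.~VI]{M}. Define the skew function $P_{\lambda/\mu}(q,t)$ by $\langle P_{\lambda/\mu}, Q_\nu\rangle = \langle P_\lambda, Q_\mu Q_\nu\rangle$ for all partitions $\nu$; it is homogeneous of degree $|\lambda|-|\mu|$ and vanishes unless $\mu\subseteq\lambda$. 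The input I would use is the comultiplication identity: for two disjoint alphabets $A$ and $B$,
\[
P_\lambda(A, B; q, t) = \sum_{\mu} P_\mu(A; q, t)\, P_{\lambda/\mu}(B; q, t),
\]
which is standard --- for instance it drops out of the Cauchy identity applied to the union $A \cup B$ by comparing coefficients against a third alphabet.

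First I would take $A = (x_1, \dots, x_N)$ and $B = (x_{N+1})$, a single variable. The left-hand side is then $P_\lambda(x_1, \dots, x_{N+1}; q, t)$. On the right, $P_\mu(x_1, \dots, x_N; q, t) = 0$ unless $\ell(\mu) \le N$, so the effective sum is over $\mu \in \GTp_N$; and $P_{\lambda/\mu}(x_{N+1}; q, t)$, being a homogeneous symmetric polynomial of degree $|\lambda|-|\mu|$ in the one variable $x_{N+1}$, must equal $\psi_{\lambda/\mu}(q,t)\, x_{N+1}^{|\lambda|-|\mu|}$ for a scalar $\psi_{\lambda/\mu}(q,t)$. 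This already yields an identity of exactly the claimed shape, and it only remains to identify the scalar.

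Next I would compute $\psi_{\lambda/\mu}$. Macdonald's one-row skew Pieri coefficient \cite[VI.(6.24)]{M} expresses $P_{\lambda/\mu}$ in one variable explicitly: it is nonzero precisely when $\lambda/\mu$ is a horizontal strip --- and unwinding the definitions this is exactly the interlacing condition $\mu \prec \lambda$ of the statement --- in which case it equals a finite product of ratios of the quantities $b_\lambda(s) = \frac{1 - q^{a_\lambda(s)}t^{l_\lambda(s)+1}}{1 - q^{a_\lambda(s)+1}t^{l_\lambda(s)}}$ over the columns of the diagram that meet $\lambda/\mu$. The only substantive remaining step is the bookkeeping lemma identifying this column product with the double product over $1 \le i \le j \le N$ displayed in $(\ref{psidef})$: one expresses the arms and legs of the relevant cells in terms of $\lambda_i,\lambda_{j+1},\mu_i,\mu_j$ and telescopes finite products of $q$-Pochhammer symbols into infinite ones via $(q^m;q)_\infty/(q^{m+k};q)_\infty = (q^m;q)_k$. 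This equivalence of the two standard forms for the Macdonald branching coefficient is well known --- it is the form that appears in the study of the $(q, t)$-Gelfand-Tsetlin graph --- so one may alternatively just quote it.

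The real obstacle is precisely that last identification of $(\ref{psidef})$ with Macdonald's column-product formula: it is elementary but long, and one has to be careful about which cells lie outside $\mu$ and about the ranges of the products when $\lambda$ or $\mu$ has parts equal to $0$. A route that avoids skew functions is a direct induction on $N$ using the Macdonald difference operators: one would verify that the right-hand side is triangular with respect to the dominance order with top term $x^\lambda$ and is an eigenfunction of $D_{N+1}$ with the correct eigenvalue once $x_{N+1}$ is frozen; but matching that eigenvalue leads back to essentially the same $q$-Pochhammer manipulation, so I would not expect it to be any shorter. Since all of this material is contained in \cite{M}, in the paper itself the cleanest option is simply to cite it; the sketch above is how I would reconstruct the proof.
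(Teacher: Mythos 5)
Your proposal is correct and consistent with how the paper handles this statement: the paper offers no proof at all, simply quoting the result from Macdonald's book \cite{M}, and your sketch is precisely the standard derivation found there (the coproduct expansion $P_\lambda(A,B)=\sum_\mu P_\mu(A)P_{\lambda/\mu}(B)$ specialized to a single extra variable, the one-variable skew formula forcing the horizontal-strip/interlacing condition, and the routine $q$-Pochhammer bookkeeping identifying Macdonald's column product with the form $(\ref{psidef})$). Nothing further is needed; citing \cite{M}, as you note and as the paper does, is the intended treatment.
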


\begin{thm}\label{thm:pierirule}[Pieri rule]
Let $N, p\in\N$, $\mu\in\GTp_N$, then
\begin{equation*}
Q_{\mu}(q, t) g_p(q, t) = \sum_{\lambda\in\GTp_{N+1}}{\psi_{\lambda/\mu}(q, t) Q_{\lambda}(q, t)},
\end{equation*}
where the sum runs over $\lambda\in\GTp_{N+1}$ such that $\mu \prec \lambda$ and $|\lambda| - |\mu| = p$.
\end{thm}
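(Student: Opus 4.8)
The plan is to deduce the Pieri rule from the branching rule (Theorem~\ref{branchingmacdonald}) together with the evaluation identity (Theorem~\ref{evaluation}), following the standard strategy in Macdonald's book. First I would recall that the one-row polynomial $g_p(q,t) = Q_{(p)}(q,t)$ enters the branching rule for a single partition $(p)$: a positive signature $\mu \in \GTp_N$ interlaces $(p) \in \GTp_1$ precisely when there is no constraint other than $p \geq 0$, so the "generating series" $\sum_{p \geq 0} g_p(q,t) y^p$ plays the role of the Cauchy kernel specialized appropriately. In the classical ($\theta=1$) case, the Pieri rule is dual to the branching rule under the Hall inner product; here I would replace the inner-product argument by the explicit evaluation identity.

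The key steps, in order: \textbf{(1)} Apply the branching rule to $P_\lambda(x_1,\ldots,x_N,x_{N+1};q,t)$ for $\lambda \in \GTp_{N+1}$ to express it as $\sum_{\mu \prec \lambda} \psi_{\lambda/\mu}(q,t)\, x_{N+1}^{|\lambda|-|\mu|} P_\mu(x_1,\ldots,x_N;q,t)$. \textbf{(2)} Specialize the $N$ variables $x_1,\ldots,x_N$ to the principal specialization $(t^{N-1},\ldots,t,1)$, and keep $x_{N+1}$ as a formal variable, so that after multiplying by $b_\lambda(q,t)$ we obtain a relation among the $Q$-polynomials evaluated at geometric progressions. \textbf{(3)} Use the evaluation identity (Theorem~\ref{evaluation}) to compute $Q_\mu(t^{N-1},\ldots,1;q,t)$ and $Q_\lambda(t^N,\ldots,1;q,t)$ explicitly, and check that the ratio $b_\lambda/b_\mu$ times the branching coefficient, when compared against these evaluations, produces exactly the coefficient $\psi_{\lambda/\mu}(q,t)$ appearing on both sides. \textbf{(4)} Extract the coefficient of $x_{N+1}^p$ to isolate the terms with $|\lambda|-|\mu|=p$, and invoke the fact that the specialization map at a sufficiently large principal progression is injective on symmetric polynomials of bounded degree in $N+1$ variables (equivalently, that a symmetric polynomial identity holding at infinitely many principal specializations is an identity), which upgrades the numerical equality back to the polynomial identity $Q_\mu(q,t) g_p(q,t) = \sum_\lambda \psi_{\lambda/\mu}(q,t) Q_\lambda(q,t)$.

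The main obstacle I anticipate is \textbf{step (3)}: bookkeeping the $q$-Pochhammer products. One must verify that
\[
b_\lambda(q,t)\,\psi_{\lambda/\mu}(q,t)\,Q_\mu(t^{N-1},\ldots,1;q,t) \Big/ \big(b_\mu(q,t)\, \text{(one-row factor from }g_p\text{)}\big)
\]
telescopes against the explicit product in Theorem~\ref{evaluation} for $Q_\lambda$. This is the heart of the classical computation in Macdonald's book (Chapter VI, around the Pieri formula), and the interlacing condition $\mu \prec \lambda$ together with $|\lambda|-|\mu|=p$ is exactly what makes the infinite products collapse to finite ones. An alternative, cleaner route that sidesteps the heaviest algebra is to use the \emph{self-duality} of Macdonald polynomials under the Hall scalar product $\langle P_\lambda, Q_\mu \rangle = \delta_{\lambda\mu}$: the branching rule says $\langle P_\lambda, Q_\mu g_p \rangle$-type pairings unfold coefficientwise, and the Pieri rule is the adjoint statement $\langle Q_\mu g_p, P_\lambda\rangle = \langle Q_\mu, P_\lambda \rangle$-expansion; since $g_p = Q_{(p)}$ and multiplication by $g_p$ is adjoint to the skew operation, the coefficients match by the very definition of $\psi_{\lambda/\mu}$. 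I would present the scalar-product proof as the main argument and relegate the direct $q$-Pochhammer verification to a remark, since the paper only needs the Pieri rule as an input (it is quoted from~\cite{M}), so a short derivation referencing Theorems~\ref{branchingmacdonald} and~\ref{evaluation} suffices.
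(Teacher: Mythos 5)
There is no ``paper's own proof'' to compare against here: Theorem \ref{thm:pierirule} is imported verbatim from Macdonald's book \cite{M} (Ch.~VI, the Pieri formulas around (6.24)), and the paper only uses it as a black box. Your second argument, via the Hall pairing, is essentially Macdonald's derivation and is sound: writing $Q_{\mu}g_p=\sum_{\lambda}c_{\lambda}Q_{\lambda}$, orthogonality $\langle P_{\lambda},Q_{\nu}\rangle=\delta_{\lambda\nu}$ gives $c_{\lambda}=\langle P_{\lambda},Q_{\mu}Q_{(p)}\rangle=\langle P_{\lambda/\mu},Q_{(p)}\rangle$, i.e.\ the coefficient of $P_{(p)}$ in the skew function $P_{\lambda/\mu}$; restricting to a single variable $y$, where $P_{\nu}(y)=y^{|\nu|}$ for one-row $\nu$ and $P_{\nu}(y)=0$ otherwise, and using that splitting off one variable computes skew functions, this coefficient is read off from $P_{\lambda/\mu}(y)=\psi_{\lambda/\mu}\,y^{|\lambda|-|\mu|}$, which is exactly what Theorem \ref{branchingmacdonald} asserts with the explicit product $(\ref{psidef})$. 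One caveat on wording: in this paper $\psi_{\lambda/\mu}$ is \emph{defined} by the product formula $(\ref{psidef})$, not ``by the very definition'' as a skew coefficient, so the identification of the two must be routed through the branching rule (which you do gesture at); also note $\tfrac{b_{\lambda}}{b_{\mu}}\psi_{\lambda/\mu}$ is the coefficient $\varphi_{\lambda/\mu}$ of the \emph{other} Pieri rule $P_{\mu}g_p=\sum\varphi_{\lambda/\mu}P_{\lambda}$, so the $b$-ratio bookkeeping in your step (3) points at the wrong identity.

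Your primary route (1)--(4), however, has a genuine gap. The branching rule is a statement for \emph{fixed} $\lambda$ summed over $\mu$, while the Pieri rule expands, for \emph{fixed} $\mu$, the product $Q_{\mu}g_p$ over $\lambda$; these transposed statements are equivalent only through a duality input (the Hall pairing, the Cauchy kernel, or conceivably the index--argument symmetry of Theorem \ref{symmetry}), and principally specializing $x_1,\dots,x_N$ destroys precisely the structure needed to pass from one to the other: for each fixed $\lambda$ you are left with a numerical (or single-variable) identity, not an expansion in the basis $\{Q_{\lambda}\}$. Worse, the rescue you invoke in step (4) is false: a symmetric function can vanish at \emph{all} principal specializations without being zero. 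For instance, with $p_r$ the power sums, $f=p_1^2-\tfrac{2}{1-t}\,p_1+\tfrac{1+t}{1-t}\,p_2$ satisfies $f(1,t,\dots,t^{N-1})=0$ for every $N$, since $p_r(1,t,\dots,t^{N-1})=(1-t^{rN})/(1-t^r)$, yet $f\neq 0$; the principal specialization of any fixed symmetric function is a polynomial in $u=t^{N}$, and the ring of symmetric functions has infinite transcendence degree over $\Q(q,t)$ while $\Q(q,t)[u]$ has transcendence degree one, so the kernel of this procedure is enormous. (Evaluation at the single point $(t^{N},\dots,t,1)$ for fixed $N+1$ is of course even less injective.) So keep the scalar-product argument as the proof --- or simply cite \cite{M}, as the paper does --- and drop steps (1)--(4).
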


Lastly recall the definition of Macdonald characters, \cite{C1}. For any $m, N\in\N$ with $1\leq m\leq N$, and $\lambda\in\GT_N$, define
\begin{equation}\label{normalmacdonald}
P_{\lambda}(x_1, \ldots, x_m; N, q, t) \myeq \frac{P_{\lambda}(x_1, \ldots, x_m, 1, t, \ldots, t^{N-m-1}; q, t)}{P_{\lambda}(1, t, t^2, \ldots, t^{N-1}; q, t)}
\end{equation}
and call $P_{\lambda}(x_1, \ldots, x_m; N, q, t)$ the \textit{Macdonald character of rank $N$, number of variables $m$ and parametrized by $\lambda$}. Observe that, by Theorem $\ref{evaluation}$, the denominator in $(\ref{normalmacdonald})$ is nonzero.

\subsection{Jack polynomials and Jack characters}\label{sec:jacks}

Our reference for Jack polynomials is \cite[Ch. VI.10]{M}, see also \cite{St, F}.

Consider the field $F = \C(\theta)$ and let $\Lambda_F[x_1^{\pm}, \ldots, x_N^{\pm}]$ be the corresponding ring of symmetric Laurent polynomials. For any $\lambda\in\GT_N$, we can define the Jack (Laurent) polynomial $J_{\lambda}(x_1, \dots, x_N; \theta)$ as the limit
\begin{equation}\label{macdonaldtojack}
    \lim_{q\rightarrow 1}{P_{\lambda}(x_1, \ldots, x_N; q, q^{\theta})} = J_{\lambda}(x_1, \ldots, x_N; \theta).
\end{equation}
The set of Jack polynomials $\{J_{\lambda}(x_1, \ldots, x_N; \theta) : \lambda\in\GT_N\}$ is a basis of $\Lambda_F[x_1^{\pm}, \ldots, x_N^{\pm}]$.

In the limit $t = q^{\theta}$, $q\rightarrow 1$, we see that most formulas for Macdonald polynomials can be degenerated into analogues for Jack polynomials, for example the following index-stability for Jack polynomials is evident
\begin{equation}\label{indexstabilityjack}
J_{(\lambda_1 + 1, \ldots, \lambda_N + 1)}(x_1, \ldots, x_N; \theta) = (x_1\cdots x_N)\cdot J_{\lambda}(x_1, \ldots, x_N; \theta).
\end{equation}

Moreover we can obtain analogues of the evaluation identity and the branching rule for Jack polynomials. For both of the results below, recall the notation
\begin{equation*}
(x)_n \myeq x(x+1)\cdots (x+n-1) = \Gamma(x+n)/\Gamma(x).
\end{equation*}

\begin{thm}\label{evaluationjacks}[Evaluation identity of Jack polynomials]
Let $N\in\N$, $\lambda\in \GTp_N$, then
\begin{equation*}
J_{\lambda}(1^N; \theta) = \prod_{1\leq i < j\leq N}{\frac{(\theta(j-i+1))_{\lambda_i - \lambda_j}}{(\theta(j-i))_{\lambda_i - \lambda_j}}} = \prod_{1\leq i < j\leq N}{\frac{\Gamma(\lambda_i - \lambda_j + \theta(j - i + 1))\Gamma(\theta(j-i))}{\Gamma(\lambda_i - \lambda_j + \theta(j - i))\Gamma(\theta(j - i + 1))}}.
\end{equation*}
\end{thm}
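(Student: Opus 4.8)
The plan is to obtain the formula by degenerating the Macdonald evaluation identity (Theorem~\ref{evaluation}) in the Jack limit $t=q^{\theta}$, $q\to 1^{-}$. Since $P_{\lambda}$ is symmetric and $t^{k}=q^{\theta k}\to 1$ for every $k$, the defining limit \eqref{macdonaldtojack} gives
\[
J_{\lambda}(1^{N};\theta)=\lim_{q\to 1^-}P_{\lambda}(1,q^{\theta},q^{2\theta},\dots,q^{(N-1)\theta};q,q^{\theta})
=\lim_{q\to1^-}\frac{Q_{\lambda}(t^{N-1},\dots,t,1;q,t)}{b_{\lambda}(q,t)}\Big|_{t=q^{\theta}},
\]
where the last equality is \eqref{PtoQ}. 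Applying Theorem~\ref{evaluation} with $m=N$ (so $\lambda$ is an arbitrary element of $\GTp_{N}$, padded by zeros) turns the numerator into an explicit finite product of $q$-Pochhammer symbols, and the prefactor appearing there is $t^{\lambda_{2}+2\lambda_{3}+\dots+(N-1)\lambda_{N}}\to 1$. Thus only a limit of an explicit rational-in-$q$ expression remains.

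To pass to the limit I would rewrite every factor through the $q$-Gamma function $\Gamma_{q}(x)=(q;q)_{\infty}(q^{x};q)_{\infty}^{-1}(1-q)^{1-x}$, which satisfies $\Gamma_{q}(x)\to\Gamma(x)$ as $q\to1^{-}$; equivalently $(q^{a};q)_{\infty}=(q;q)_{\infty}(1-q)^{1-a}/\Gamma_{q}(a)$. A direct count shows the numerator and denominator of $Q_{\lambda}(t^{N-1},\dots,1;q,t)$ (after substituting $t=q^{\theta}$) each contain $2N+\binom{N}{2}$ Pochhammer factors, so the $(q;q)_{\infty}$'s cancel, and a one-line linear computation shows the total exponent of $(1-q)$ equals $N\theta+\theta\binom{N}{2}-\theta\sum_{i=1}^{N}(N-i+1)=0$, so all powers of $(1-q)$ cancel as well. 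Moreover $b_{\lambda}(q,q^{\theta})\to b_{\lambda}(\theta):=\prod_{s\in\lambda}\frac{a(s)+\theta l(s)+\theta}{a(s)+\theta l(s)+1}$, a finite nonzero constant, and all the $\Gamma$-arguments that appear are strictly positive. Hence the limit exists, is finite and nonzero, and
\[
J_{\lambda}(1^{N};\theta)=b_{\lambda}(\theta)^{-1}\cdot\frac{\prod_{\mathrm{den}}\Gamma(\cdot)}{\prod_{\mathrm{num}}\Gamma(\cdot)},
\]
an explicit product of ordinary $\Gamma$-values indexed by $i\in\{1,\dots,N\}$ and by pairs $1\le i<j\le N$, times the box product $b_{\lambda}(\theta)$.

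The remaining, and main, step is a bookkeeping identity: to recognize this expression as $\prod_{1\le i<j\le N}\frac{\Gamma(\lambda_{i}-\lambda_{j}+\theta(j-i+1))\Gamma(\theta(j-i))}{\Gamma(\lambda_{i}-\lambda_{j}+\theta(j-i))\Gamma(\theta(j-i+1))}$, whence the Pochhammer form follows via $(x)_{n}=\Gamma(x+n)/\Gamma(x)$. The $\Gamma$-factors coming from the last, pair-indexed product in Theorem~\ref{evaluation} are already of the desired ``pair of rows'' shape; the factors indexed by a single $i$, together with $b_{\lambda}(\theta)$, must be converted. For this I would use the standard device of rewriting a product over the boxes $s=(i,k)$ of the Young diagram of $\lambda$ as a product over pairs of rows: a box $s$ in row $i$ whose column index $k$ lies in the band $(\lambda_{j+1},\lambda_{j}]$ has $l(s)=j-i$ and $a(s)$ running over $\{\lambda_{i}-\lambda_{j},\dots,\lambda_{i}-\lambda_{j+1}-1\}$, so $\prod_{s}f(a(s),l(s))=\prod_{i\le j}\prod_{a}f(a,j-i)$; each inner product over $a$ collapses to a ratio of $\Gamma$'s, and after matching and telescoping across the index $j$ one arrives at the claimed product. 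This reorganization is elementary but it is the bulk of the computation. (An alternative that avoids it is to induct on $N$ via the Jack degeneration of the branching rule, $J_{\lambda}(1^{N};\theta)=\sum_{\mu\prec\lambda}\psi^{(\theta)}_{\lambda/\mu}J_{\mu}(1^{N-1};\theta)$, and prove the resulting summation identity; I would instead present the direct $q$-degeneration above, since it is the most self-contained given the tools already assembled.)
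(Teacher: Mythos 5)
Your proposal is correct and follows essentially the same route the paper intends: the paper presents Theorem~\ref{evaluationjacks} as a routine degeneration of the Macdonald evaluation identity (Theorem~\ref{evaluation}) under $t=q^{\theta}$, $q\to 1^{-}$, which is exactly your argument, carried out in more detail (the $(1-q)$-power and $(q;q)_{\infty}$ cancellations you compute do work out, and the remaining box-to-row-pair reorganization you sketch is the standard, elementary bookkeeping).
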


\begin{thm}\label{branchingjacks}[Branching rule for Jack polynomials]
Let $N\in\N$, $\lambda\in \GTp_{N+1}$, then
\begin{equation*}
J_{\lambda}(x_1, \ldots, x_N, x_{N+1}; \theta) = \sum_{\mu : \mu\prec\lambda}{\psi_{\lambda/\mu}(\theta)x_{N+1}^{|\lambda| - |\mu|}J_{\mu}(x_1, \ldots, x_N; \theta)},
\end{equation*}
where the branching coefficients are
\begin{equation*}
\psi_{\lambda/\mu}(\theta) \myeq \prod_{1\leq i\leq j\leq N}{\frac{(\mu_i - \mu_j + \theta(j-i)+\theta)_{\mu_j - \lambda_{j+1}}}{(\mu_i - \mu_j + \theta(j-i)+1)_{\mu_j - \lambda_{j+1}}}\frac{(\lambda_i - \mu_j + \theta(j-i) + 1)_{\mu_j - \lambda_{j+1}}}{(\lambda_i - \mu_j + \theta(j-i) + \theta)_{\mu_j - \lambda_{j+1}}}}.
\end{equation*}
\end{thm}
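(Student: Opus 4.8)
The plan is to deduce Theorem~\ref{branchingjacks} from the branching rule for Macdonald polynomials, Theorem~\ref{branchingmacdonald}, by the standard degeneration $t = q^{\theta}$, $q \to 1^{-}$. Fix $\lambda \in \GTp_{N+1}$ and set $t = q^{\theta}$ in Theorem~\ref{branchingmacdonald}. As $q \to 1^{-}$, the left-hand side $P_{\lambda}(x_1, \ldots, x_{N+1}; q, q^{\theta})$ tends to $J_{\lambda}(x_1, \ldots, x_{N+1}; \theta)$ by the defining limit $(\ref{macdonaldtojack})$, and likewise each $P_{\mu}(x_1, \ldots, x_N; q, q^{\theta})$ on the right tends to $J_{\mu}(x_1, \ldots, x_N; \theta)$. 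Only the $\mu$ with $\mu \prec \lambda$ contribute, and these form a finite set (since $\lambda_{i+1} \le \mu_i \le \lambda_i$ for all $i$), so the limit commutes with the sum. It therefore suffices to show that, for each $\mu \in \GTp_N$ with $\mu \prec \lambda$, the Macdonald branching coefficient $\psi_{\lambda/\mu}(q, q^{\theta})$ of $(\ref{psidef})$ converges as $q \to 1^{-}$ to the expression for $\psi_{\lambda/\mu}(\theta)$ in the statement. (For $\mu \not\prec \lambda$ both coefficients vanish by convention, so the corresponding terms match trivially.)

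To compute this limit I would use the elementary asymptotics of the $q$-Pochhammer symbol: for any real $a$ and integer $n \ge 0$ one has $(q^{a}; q)_{n} = (1 - q^{a})(1 - q^{a+1}) \cdots (1 - q^{a+n-1})$, hence $(1-q)^{-n}(q^{a}; q)_{n} \to (a)_{n}$ as $q \to 1^{-}$, and moreover $(q^{a}; q)_{\infty} = (q^{a}; q)_{n}\,(q^{a+n}; q)_{\infty}$. Substituting $t = q^{\theta}$ turns each factor $q^{(\cdot)}t^{(\cdot)}$ in $(\ref{psidef})$ into $q$ raised to an exponent that is linear in $\theta$ and $>0$ whenever $\mu \prec \lambda$. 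Writing $n = \mu_j - \lambda_{j+1} \ge 0$, the eight exponents attached to a pair $(i, j)$ split into four pairs of the form $\{a,\, a + n\}$ with $a \in \{\,\mu_i - \mu_j + \theta(j-i) + \theta,\ \mu_i - \mu_j + \theta(j-i) + 1,\ \lambda_i - \mu_j + \theta(j-i) + \theta,\ \lambda_i - \mu_j + \theta(j-i) + 1\,\}$, because adding $n$ to any of $\mu_i - \mu_j$, $\mu_i - \mu_j + 1$, $\lambda_i - \mu_j$, $\lambda_i - \mu_j + 1$ turns $\mu_j$ into $\lambda_{j+1}$. Using $(q^{a}; q)_{\infty}/(q^{a+n}; q)_{\infty} = (q^{a}; q)_{n}$ along these pairs, the $(i,j)$-factor of $\psi_{\lambda/\mu}(q, q^{\theta})$ collapses to
\[
\frac{(q^{\mu_i - \mu_j + \theta(j-i) + \theta};\, q)_{n}}{(q^{\mu_i - \mu_j + \theta(j-i) + 1};\, q)_{n}} \cdot \frac{(q^{\lambda_i - \mu_j + \theta(j-i) + 1};\, q)_{n}}{(q^{\lambda_i - \mu_j + \theta(j-i) + \theta};\, q)_{n}}, \qquad n = \mu_j - \lambda_{j+1}.
\]
Now let $q \to 1^{-}$: each of the four finite symbols contributes a factor $(1-q)^{n}$, these cancel (two in the numerator, two in the denominator), and one is left with
\[
\frac{(\mu_i - \mu_j + \theta(j-i) + \theta)_{n}}{(\mu_i - \mu_j + \theta(j-i) + 1)_{n}} \cdot \frac{(\lambda_i - \mu_j + \theta(j-i) + 1)_{n}}{(\lambda_i - \mu_j + \theta(j-i) + \theta)_{n}},
\]
which is exactly the $(i,j)$-factor of $\psi_{\lambda/\mu}(\theta)$. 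Multiplying over $1 \le i \le j \le N$ gives $\lim_{q \to 1^{-}} \psi_{\lambda/\mu}(q, q^{\theta}) = \psi_{\lambda/\mu}(\theta)$ and completes the proof.

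I expect the one place where care is needed to be the bookkeeping in the middle step: one has to check that, after the substitution $t = q^{\theta}$, the eight arguments in $(\ref{psidef})$ genuinely pair off into four pairs differing by the single nonnegative integer $n = \mu_j - \lambda_{j+1}$, with the correct member of each pair landing in the numerator and in the denominator, so that every infinite $q$-Pochhammer symbol collapses to a finite one and all spurious powers of $1-q$ disappear before the limit is taken. Granting this, the argument is purely formal. I would also remark, in passing, that the Jack evaluation identity (Theorem~\ref{evaluationjacks}) is obtained from the Macdonald evaluation identity (Theorem~\ref{evaluation}) by the same $t = q^{\theta}$, $q \to 1^{-}$ template, and that the convention $\psi_{\lambda/\mu}(\theta) = 0$ for $\mu \not\prec \lambda$ is inherited from the analogous Macdonald convention.
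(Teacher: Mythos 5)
Your proposal is correct and follows exactly the route the paper intends: the paper states this theorem as the $t=q^{\theta}$, $q\to 1$ degeneration of the Macdonald branching rule (Theorem \ref{branchingmacdonald}), without spelling out the details. Your pairing of the eight $q$-Pochhammer arguments into four pairs differing by $n=\mu_j-\lambda_{j+1}$, the collapse to finite symbols, and the cancellation of the $(1-q)^n$ factors are all accurate, so the proposal faithfully fills in the computation the paper leaves implicit.
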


It is better for our purposes to specialize the formal parameter $\theta$ to a positive real number, and effectively assume hereinafter $\theta > 0$.
Since Theorem $\ref{branchingjacks}$ shows that all coefficients of the polynomial are well-defined for any $\theta > 0$, this specialization poses no problem; all prior statements continue to hold.
The set of Jack polynomials $\{J_{\lambda}(x_1, \ldots, x_N; \theta) : \lambda\in\GT_N\}$ is now a basis of $\Lambda_{\C}[x_1^{\pm}, \ldots, x_N^{\pm}]$.

Let us introduce the main object of our study.

\begin{df}\label{jackunitaries}
For any integers $1\leq m\leq N$, and $\lambda\in\GT_N$, define
\begin{equation}\label{normaljack}
J_{\lambda}(x_1, \ldots, x_m; N, \theta) \myeq \frac{J_{\lambda}(x_1, \ldots, x_m, 1^{N-m}; \theta)}{J_{\lambda}(1^N; \theta)}
\end{equation}
and call $J_{\lambda}(x_1, \ldots, x_m; N, \theta)$ the \textit{Jack unitary character of rank $N$, number of variables $m$ and parametrized by $\lambda$}. For simplicity of terminology, we call $J_{\lambda}(x_1, \ldots, x_m; N, \theta)$ a Jack character rather than a Jack unitary character. As a result of the evaluation identity for Jack polynomials, Theorem $\ref{evaluationjacks}$, the denominator in $(\ref{normaljack})$ is nonzero.
\end{df}

\begin{rem}
A different notion of Jack character was introduced previously in the literature by Lassalle, \cite{L}. It has been studied heavily in recent years by Maciej Doł\k{e}ga, Valentin F\'{e}ray and Piotr \'{S}niady, see e.g. \cite{DF13, DF16, DFS, Sn}. These Jack characters are related to symmetric groups $S(N)$ like the Jack characters in Definition $\ref{jackunitaries}$ are related to unitary groups $U(N)$.
\end{rem}

\section{Integral representations for Jack characters of one variable}\label{sec:jackintegral}

In this section, and the rest of the paper, we often use $i$ for the index of a product or sum, so we denote the imaginary unit by the bold letter $\ii = \sqrt{-1}$.

\subsection{Integral formulas for Macdonald characters}

The following three theorems were proved in \cite{C1}. The latter two are analytic continuations of the first one which holds when the variable $\theta$ is a positive integer. For all the formulas below assume that $q\in (0, 1)$ and $t = q^{\theta}$ for some $\theta > 0$.

\begin{thm}[\cite{C1}, Thm. 3.1]\label{macdonaldthm1}
Let $\theta\in\N$, $t = q^{\theta}$, $N\in\N$, $\lambda\in\GT_N$ and $x\in\C \setminus \{0, q, q^2, \ldots, q^{\theta N - 1}\}$. Then
\begin{equation}\label{macdonaldthm1eqn}
\frac{P_{\lambda}(x, t, t^2, \ldots, t^{N-1}; q, t)}{P_{\lambda}(1, t, t^2, \ldots, t^{N-1}; q, t)} = \ln(1/q)\prod_{i=1}^{\theta N - 1}{\frac{1 - q^i}{x - q^i}}\frac{1}{2\pi\ii}\oint_{\CC_0}{\frac{x^z}{\prod_{i=1}^N\prod_{j=0}^{\theta -1}{(1 - q^{z - (\lambda_i + \theta(N-i) + j)})}}dz},
\end{equation}
where $\CC_0$ is a closed, positively oriented contour enclosing the poles $\{\lambda_i + \theta(N-i) + j :\ i = 1, 2, \ldots, N, \ j = 0, 1, \ldots, \theta - 1\}$ and no other poles of the integrand. For instance, the rectangular contour with vertices $-M-r\ii, \ -M+r\ii, \ M+r\ii$ and $M-r\ii$, for any $-\frac{2\pi}{\ln{q}} > r > 0$ and any $M > \max\{0, -\lambda_N, \lambda_1 + \theta N - 1\}$, is a suitable contour.
\end{thm}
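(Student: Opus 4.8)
The plan is to start from the branching rule for Macdonald polynomials, Theorem \ref{branchingmacdonald}, iterated down to one variable, or more efficiently from the index-argument symmetry, Theorem \ref{symmetry}, which converts the left-hand ratio into a ratio of Macdonald polynomials of one variable. Indeed, specializing the symmetry identity with $\mu = (n)$ a single row and using $g_n = Q_{(n)}$ together with the Pieri rule, one reduces the whole question to understanding $P_{(n)}$ or $g_n$ evaluated at a single geometric specialization; the generating function $\sum_n g_n(q,t) u^n = \prod_i \tfrac{(t x_i u; q)_\infty}{(x_i u; q)_\infty}$ at the principal specialization $x_i = t^{i-1}$ is explicit. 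Concretely, I would: (i) write the left side as a finite sum over $n$ of $x^n$ times the ratio $\psi_{\lambda/\mu}$-type coefficients coming from the Pieri rule (this is essentially formula $(\ref{proof:pieri2})$ referenced later, specialized to the easiest case); (ii) recognize this finite sum as the expansion of a rational function in $x$ whose poles sit exactly at $x = q^{\lambda_i + \theta(N-i)+j}$; (iii) reconstitute the sum as a contour integral by the residue theorem, picking the contour $\CC_0$ so that it encircles precisely those poles.

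The key computational step is step (ii)–(iii): I need an identity of the shape
\begin{equation*}
\sum_{n \geq 0} c_n\, x^n = \ln(1/q)\,\prod_{i=1}^{\theta N-1}\frac{1-q^i}{x-q^i}\cdot\frac{1}{2\pi\ii}\oint_{\CC_0}\frac{x^z\,dz}{\prod_{i=1}^N\prod_{j=0}^{\theta-1}\bigl(1 - q^{z-(\lambda_i+\theta(N-i)+j)}\bigr)},
\end{equation*}
where the $c_n$ are the explicit Pieri/branching coefficients. The way I would verify this is to compute the right-hand integral by residues: the integrand $x^z / \prod_{i,j}(1 - q^{z - a_{ij}})$, with $a_{ij} = \lambda_i + \theta(N-i)+j$, has simple poles at $z = a_{ij}$ (assuming the $a_{ij}$ are distinct; the confluent case is handled by a limiting argument or by noting both sides are analytic in auxiliary parameters), and the sum of the residues, after multiplying by the prefactor $\ln(1/q)\prod_{i=1}^{\theta N-1}\frac{1-q^i}{x-q^i}$, should match the closed form for $P_\lambda(x,t,\dots,t^{N-1};q,t)/P_\lambda(1,t,\dots,t^{N-1};q,t)$ that one gets from the one-variable branching rule plus the evaluation identity, Theorem \ref{evaluation}. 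The factor $\ln(1/q)$ arises because $\mathrm{Res}_{z=a}\, x^z/(1-q^{z-a}) = -x^a/\ln q = x^a/\ln(1/q)$.

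The main obstacle will be matching the combinatorial prefactors: one has to show that the product $\prod_{i=1}^{\theta N-1}\frac{1-q^i}{x-q^i}$ together with the residue weights $\prod_{(k,l)\neq(i,j)} (1-q^{a_{ij}-a_{kl}})^{-1}$ collapses into exactly the evaluation-identity ratios $Q_\lambda(1,t,\dots)/Q_\lambda(\dots)$-type expressions appearing in the one-variable Macdonald polynomial. This is a careful but mechanical manipulation of $q$-Pochhammer symbols; the cleanest route is probably to check the identity first for $\lambda = 0$ (where both sides reduce to a $q$-binomial-type evaluation), then argue by a telescoping/induction on $|\lambda|$ using the Pieri rule, so that one never has to expand the full product from scratch. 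The hypothesis $\theta \in \N$ is used precisely here, so that $\prod_{j=0}^{\theta-1}(1-q^{z-a-j})$ is an honest finite product and the pole structure is as described; the contour's admissibility (that the stated rectangle with $0 > r > -2\pi/\ln q$ encircles all the $a_{ij}$ and avoids the spurious periodic copies $a_{ij} + 2\pi\ii k/\ln q$) is then a routine check on the location of zeros of $1 - q^{z-a}$.
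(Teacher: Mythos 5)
You should first note that the paper does not prove this statement at all: it is imported verbatim from the companion paper \cite{C1}, and the only indication of its proof here is the ``duality'' diagram in the introduction, which says the one-variable integral representations are dual (via the index--argument symmetry, Theorem \ref{symmetry}) to the closed generating function for one-row Macdonald polynomials. Your proposal gestures at exactly these ingredients, and you correctly identify where $\ln(1/q)$ comes from (the residue of $(1-q^{z-a})^{-1}$) and why $\theta\in\N$ matters. But as written there are two genuine problems. First, step (ii) misstates the structure of the identity: the branching-rule expansion of the left side is a Laurent polynomial in $x$ with no poles whatsoever, and nothing in the formula has poles at $x=q^{\lambda_i+\theta(N-i)+j}$; the only $x$-poles on the right are at $x=q,\dots,q^{\theta N-1}$ (coming from the prefactor, and they must cancel), while the points $a_{ij}=\lambda_i+\theta(N-i)+j$ are poles in the integration variable $z$ (or, in the dual picture, poles at $u=q^{-a_{ij}}$ of the generating function $\prod_i\prod_{j=0}^{\theta-1}(1-q^{a_{ij}}u)^{-1}$ in an auxiliary variable $u$). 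Incidentally, the confluence worry is vacuous: since $\lambda_i+\theta(N-i)\ge \lambda_{i+1}+\theta(N-i-1)+\theta$, the $\theta N$ numbers $a_{ij}$ are automatically distinct for $\theta\in\N$.

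Second, and more seriously, the decisive computation is missing. Equating the $\theta N$-term residue sum (times the prefactor) with the branching-rule sum over all interlacing $\mu\prec\lambda$ is not a ``mechanical manipulation of $q$-Pochhammer symbols'' --- that collapse of a potentially huge interlacing sum into $\theta N$ terms is precisely the content of the theorem (for $\theta=1$ it is what the determinantal structure buys in \cite{GP}), and your fallback of ``induction on $|\lambda|$ via the Pieri rule'' comes with no mechanism: the Pieri rule does not in any evident way recurse the normalized ratio $P_\lambda(x,t,\dots,t^{N-1})/P_\lambda(1,t,\dots,t^{N-1})$ in $\lambda$. The route you mention in passing is the one that actually closes the argument and should be made the backbone: by Theorem \ref{symmetry} with $\mu=(n)$, the left side evaluated at the lattice points $x$ corresponding to $q^{n}$ equals $g_n(q^{\lambda_1}t^{N-1},\dots,q^{\lambda_N})/g_n(t^{N-1},\dots,1)$; for $\theta\in\N$ the generating function of $g_n$ at the $\lambda$-shifted (not the principal) specialization is the finite product $\prod_{i,j}(1-q^{a_{ij}}u)^{-1}$, whose partial-fraction decomposition exhibits $g_n$ as a sum of $\theta N$ geometric terms matching exactly the residue expansion of the stated integral with $x^z\mapsto q^{nz}$, while the principal evaluation $g_n(t^{N-1},\dots,1)=(t^N;q)_n/(q;q)_n$ produces the prefactor $\prod_{i=1}^{\theta N-1}(1-q^i)/(x-q^i)$; since both sides are rational in $x$ and agree at infinitely many such points, they agree for all $x\in\C\setminus\{0,q,\dots,q^{\theta N-1}\}$. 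Without this (or an equivalent substitute for your unproved matching step), the proposal does not yet constitute a proof.
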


\begin{thm}[\cite{C1}, Thm. 3.2]\label{macdonaldthm2}
Let $\theta > 0$, $t = q^{\theta}$, $N\in\N$, $\lambda\in\GT_N$ and $x\in\C \setminus \{0\}$, $|x| < 1$. The integral below converges absolutely and the equality holds
\begin{equation}\label{macdonaldthm2eqn}
\frac{P_{\lambda}(xt^{N-1}, t^{N-2}, \ldots, t, 1; q, t)}{P_{\lambda}(t^{N-1}, t^{N-2}, \ldots, t, 1; q, t)} = \frac{\ln q}{1 - q}\frac{(xt^N; q)_{\infty}}{(xq; q)_{\infty}}\frac{\Gamma_q(\theta N)}{2\pi\ii}\int_{\CC^+}{x^z\prod_{i=1}^N{\frac{\Gamma_q(\lambda_i + \theta(N-i)-z)}{\Gamma_q(\lambda_i + \theta(N-i+1)-z)}}dz}.
\end{equation}
Contour $\CC^+$ is the positively oriented contour consisting of the segment $[M+r\ii, \ M-r\ii]$ and the horizontal lines $[M+r\ii, +\infty+r\ii)$, $[M-r\ii, +\infty-r\ii)$, for any $-\frac{\pi}{2\ln{q}} > r > 0$ and $\lambda_N > M$, see Figure $\ref{fig:C}$.
Observe that $\CC^+$ encloses all real poles (which accumulate at $+\infty$) of the integrand, and no other poles.
\end{thm}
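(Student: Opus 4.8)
The plan is to derive the identity from Theorem $\ref{macdonaldthm1}$, which is the same statement for $\theta\in\N$, by first rewriting $(\ref{macdonaldthm1eqn})$ in terms of $q$-Gamma functions and then analytically continuing in $\theta$.

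For the case $\theta\in\N$: by the homogeneity $P_\lambda(ta_1,\dots,ta_N;q,t)=t^{|\lambda|}P_\lambda(a_1,\dots,a_N;q,t)$ and the symmetry of Macdonald polynomials, the left-hand side of $(\ref{macdonaldthm2eqn})$ equals $t^{-|\lambda|}$ times the left-hand side of $(\ref{macdonaldthm1eqn})$ with $x$ replaced by $xt^N=xq^{\theta N}$. I would then substitute $x\mapsto xq^{\theta N}$ into $(\ref{macdonaldthm1eqn})$ and use the elementary identity $\prod_{j=0}^{\theta-1}(1-q^{w+j})=(1-q)^{\theta}\,\Gamma_q(w+\theta)/\Gamma_q(w)$, valid for $\theta\in\N$, with $w=\lambda_i+\theta(N-i)-z$: this turns the denominator $\prod_i\prod_j\bigl(1-q^{\,z-\lambda_i-\theta(N-i)-j}\bigr)$ of the integrand into $\prod_i\Gamma_q(\lambda_i+\theta(N-i)-z)/\Gamma_q(\lambda_i+\theta(N-i+1)-z)$, up to a factor $q^{-\theta N z}$ that cancels the $q^{\theta N z}$ produced by $x\mapsto xq^{\theta N}$, up to a factor $q^{\theta|\lambda|}$ that cancels the $t^{-|\lambda|}$, and up to $z$-independent constants. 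A direct computation — using $\Gamma_q(\theta N)=(1-q)^{1-\theta N}(q;q)_\infty/(q^{\theta N};q)_\infty$, the evaluation $\prod_{i=1}^{\theta N-1}(1-q^i)=(1-q)^{\theta N-1}\Gamma_q(\theta N)$, and $(xt^N;q)_\infty/(xq;q)_\infty=\prod_{k=1}^{\theta N-1}(1-xq^k)^{-1}$ — collects those constants into exactly the prefactor $\tfrac{\ln q}{1-q}\tfrac{(xt^N;q)_\infty}{(xq;q)_\infty}\Gamma_q(\theta N)$ of $(\ref{macdonaldthm2eqn})$. Finally, since $|x|<1$ makes the integrand decay exponentially as $\operatorname{Re}z\to+\infty$ (the factor $x^z$ decays, while the $q$-Gamma ratio stays bounded, indeed tends to $0$, there), one may truncate $\CC^+$ at $\operatorname{Re}z=M$, close it off, and let $M\to\infty$; this shows the integral over $\CC^+$ equals $2\pi\ii$ times the sum of residues at the (for $\theta\in\N$, finitely many) real poles of the integrand, which is precisely the value of the rectangular-contour integral in $(\ref{macdonaldthm1eqn})$. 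Hence $(\ref{macdonaldthm2eqn})$ holds for every $\theta\in\N$.

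For the analytic continuation: fix $q\in(0,1)$, $\lambda$, $N$, and $x$ with $|x|<1$, and regard both sides of $(\ref{macdonaldthm2eqn})$ as functions of $\theta$. The left-hand side is a rational function of $t=q^\theta$ whose denominator $P_\lambda(t^{N-1},\dots,1;q,t)$ is, by the evaluation identity for Macdonald polynomials (the $P$-analogue of Theorem $\ref{evaluation}$), a product of $q$-Pochhammer symbols with arguments of modulus $<1$; so it is holomorphic and bounded on each half-plane $\{\operatorname{Re}\theta\geq\varepsilon\}$. For the right-hand side, the powers of $1-q$ cancel and the integrand along $\CC^+$ becomes $x^z\prod_{i=1}^N(q^{\lambda_i-z}t^{N-i+1};q)_\infty/(q^{\lambda_i-z}t^{N-i};q)_\infty$; one shows this integral converges absolutely, uniformly for $\theta$ in compact subsets of $\{\operatorname{Re}\theta>0\}$, and therefore defines a function holomorphic in $\theta$ and bounded on $\{\operatorname{Re}\theta\geq\varepsilon\}$ (the multiplicative prefactor is manifestly holomorphic and bounded there). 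Since the two sides are then holomorphic on $\{\operatorname{Re}\theta>0\}$ and agree for $\theta\in\N$, Carlson's theorem (a bounded holomorphic function on a half-plane vanishing on the nonnegative integers vanishes identically) forces them to agree for all $\theta>0$; the asserted absolute convergence for $|x|<1$ is obtained along the way.

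The main obstacle is the estimate just invoked: controlling $\prod_i(q^{\lambda_i-z}t^{N-i+1};q)_\infty/(q^{\lambda_i-z}t^{N-i};q)_\infty$ as $\operatorname{Re}z\to+\infty$. Writing $u=q^{\lambda_i-z}t^{N-i}$, so $|u|\to\infty$ along the contour, and using the asymptotics $\log|(u;q)_\infty|\sim(\log|u|)^2/(2|\log q|)$ for large argument, each factor behaves like $|u|^{-\theta}$ and hence decays exponentially in $\operatorname{Re}z$ at a rate proportional to $\operatorname{Re}\theta$; combined with $|x^z|=|x|^{\operatorname{Re}z}$ this gives absolute convergence with decay rate bounded below uniformly for $\theta$ in any $\{\operatorname{Re}\theta\geq\varepsilon\}$ and $x$ in any compact subset of the open unit disk, which supplies the holomorphy and uniform boundedness needed for Carlson's theorem. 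The remaining points — the constant bookkeeping in the first step, the check that $\CC^+$ encloses precisely the real poles of the integrand, and the verification of Carlson's hypotheses — are routine.
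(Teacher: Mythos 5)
Your reduction of the $\theta\in\N$ case to Theorem \ref{macdonaldthm1} is correct --- I checked the bookkeeping: the substitution $x\mapsto xq^{\theta N}$, homogeneity and symmetry producing the factor $t^{-|\lambda|}$, the identity $\prod_{j=0}^{\theta-1}(1-q^{w+j})=(1-q)^{\theta}\Gamma_q(w+\theta)/\Gamma_q(w)$, and the constants (the powers of $q$ cancel exactly because $\theta^2\binom{N}{2}+N\binom{\theta}{2}=\binom{\theta N}{2}$) do assemble into the stated prefactor, and for integer $\theta$ the $\CC^+$-integral is indeed the finite residue sum computed by $\CC_0$. This also matches the route the paper indicates: it cites \cite{C1} for the proof and describes Theorems \ref{macdonaldthm2} and \ref{macdonaldthm25} as analytic continuations in $\theta$ of Theorem \ref{macdonaldthm1}.

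The genuine gap is in the continuation step, and it is not the large-$\Re z$ estimate you single out (that part is fine: each $q$-Gamma ratio decays like $q^{\theta\Re z}$ along the contour). The problem is your claim that the right-hand side, with the \emph{fixed} contour $\CC^+$, is holomorphic in $\theta$ on a right half-plane. The poles of the integrand lie at $z=\lambda_i+\theta(N-i)+k+\tfrac{2\pi\ii}{\ln q}\Z$, $k\geq 0$; for real $\theta$ they are correctly enclosed, but as soon as $|\Im\theta|$ exceeds roughly $r/(N-1)$ the poles of the families with $i<N$ cross the horizontal lines $\Im z=\pm r$ of $\CC^+$. Across each such wall the fixed-contour integral jumps by the escaped residues, so it defines a holomorphic function of $\theta$ only on a thin horizontal strip around the positive real axis, not on $\{\Re\theta\geq\varepsilon\}$; and holomorphy plus boundedness on a strip gives no uniqueness from values at the integers (e.g.\ $\sin(\pi\theta)$ is bounded on any strip and vanishes at all integers), so Carlson's theorem cannot be applied as you set it up. Repairing this requires real work: either deform the contour together with $\theta$ and re-establish convergence, single-valuedness and the growth bounds (nontrivial, since infinitely many poles migrate by a full vertical period), or first convert the integral into its absolutely convergent residue series and continue that, or produce some other representation valid on the whole half-plane. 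A secondary point: your boundedness of the left-hand side on $\{\Re\theta\geq\varepsilon\}$ is justified only by nonvanishing of $P_{\lambda}(t^{N-1},\dots,1;q,t)$, but that specialization tends to $0$ as $\Re\theta\to\infty$ whenever $\sum_i(i-1)\lambda_i>0$, so boundedness of the ratio needs its own (easy) argument, and signatures with negative parts should first be reduced to partitions via the index-stability property $(\ref{macdonaldsignatures1})$.
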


\begin{thm}[\cite{C1}, Thm. 3.3]\label{macdonaldthm25}
Let $\theta > 0$, $t = q^{\theta}$, $N\in\N$, $\lambda\in\GT_N$ and $x\in\C \setminus \{0\}$, $|x| > 1$. The integral below converges absolutely and the equality holds
\begin{equation}\label{macdonaldthm25eqn}
\frac{P_{\lambda}(x, t, t^2, \ldots, t^{N-1}; q, t)}{P_{\lambda}(1, t, t^2, \ldots, t^{N-1}; q, t)} = \frac{\ln q}{q - 1}\frac{(x^{-1}t^N; q)_{\infty}}{(x^{-1}q; q)_{\infty}}\frac{\Gamma_q(\theta N)}{2\pi\ii}\int_{\CC^-}{x^z\prod_{i=1}^N{\frac{\Gamma_q(z -( \lambda_i - \theta i + \theta))}{\Gamma_q(z - (\lambda_i - \theta i))}}dz}.
\end{equation}
Contour $\CC^-$ is the positively oriented contour consisting of the segment $[M-r\ii, \ M+r\ii]$ and the horizontal lines $[M-r\ii, -\infty-r\ii)$, $[M+r\ii, -\infty+r\ii)$, for any $-\frac{\pi}{2\ln{q}} > r > 0$ and any $M > \lambda_1$, see Figure $\ref{fig:Cminus}$.
Observe that $\CC^-$ encloses all real poles (which accumulate at $-\infty$) of the integrand, and no other poles.
\end{thm}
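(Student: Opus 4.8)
The plan is to derive Theorem~\ref{macdonaldthm25} from Theorem~\ref{macdonaldthm2} by exploiting the inversion symmetry of Macdonald Laurent polynomials: for any $\lambda\in\GT_N$ one has
\begin{equation*}
P_{\lambda}(x_1^{-1},\dots,x_N^{-1};q,t)=P_{\bar\lambda}(x_1,\dots,x_N;q,t),\qquad \bar\lambda:=(-\lambda_N,\dots,-\lambda_1)\in\GT_N .
\end{equation*}
For partitions this is a standard consequence of the defining triangularity and the torus scalar product (both preserved by $x_i\mapsto x_i^{-1}$, the dominance order merely getting reversed), and it extends to arbitrary signatures via the index-stability $(\ref{macdonaldsignatures11})$; cf.\ \cite[Ch.~VI]{M}. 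Since $|x|>1$ if and only if $|x^{-1}|<1$, this symmetry is exactly what is needed to push the $|x|>1$ case into the regime of Theorem~\ref{macdonaldthm2}, at the cost of replacing $\lambda$ by its reversal $\bar\lambda$.

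Concretely, I would set $x=y^{-1}$ with $|y|<1$ and apply the inversion symmetry to the numerator and the denominator of the left-hand side of $(\ref{macdonaldthm25eqn})$ separately; absorbing a common power of $t^{N-1}$ using the homogeneity of $P_{\bar\lambda}$ (of Laurent-degree $|\bar\lambda|=-|\lambda|$), equivalently iterating $(\ref{macdonaldsignatures1})$, leaves
\begin{equation*}
\frac{P_{\lambda}(x,t,t^2,\dots,t^{N-1};q,t)}{P_{\lambda}(1,t,t^2,\dots,t^{N-1};q,t)}=\frac{P_{\bar\lambda}(yt^{N-1},t^{N-2},\dots,1;q,t)}{P_{\bar\lambda}(t^{N-1},t^{N-2},\dots,1;q,t)} .
\end{equation*}
Theorem~\ref{macdonaldthm2}, applied with $\bar\lambda$ in place of $\lambda$ and $y$ in place of $x$, now writes this as $\tfrac{\ln q}{1-q}\tfrac{(yt^N;q)_\infty}{(yq;q)_\infty}\tfrac{\Gamma_q(\theta N)}{2\pi\ii}\int_{\CC^+}y^z\prod_{i=1}^N\tfrac{\Gamma_q(\bar\lambda_i+\theta(N-i)-z)}{\Gamma_q(\bar\lambda_i+\theta(N-i+1)-z)}\,dz$.

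The final step is the substitution $z\mapsto -z$. The linear map $z\mapsto -z$ is orientation preserving, so it carries the positively oriented $\CC^+$ — whose vertical segment has real part $M_0$ with $M_0<\bar\lambda_N=-\lambda_1$ and whose horizontal rays run to $+\infty$ — onto the positively oriented $\CC^-$ of the statement, with vertical segment at $-M_0>\lambda_1$ and rays running to $-\infty$; the differential $dz=-dw$ then flips the prefactor $\tfrac{\ln q}{1-q}$ to $\tfrac{\ln q}{q-1}$. Writing $\bar\lambda_i=-\lambda_{N+1-i}$ and reindexing $j=N+1-i$ turns the $\Gamma_q$-product into $\prod_{j=1}^N\tfrac{\Gamma_q(z-(\lambda_j-\theta j+\theta))}{\Gamma_q(z-(\lambda_j-\theta j))}$, while $y^{-z}=x^z$, $(yt^N;q)_\infty=(x^{-1}t^N;q)_\infty$ and $(yq;q)_\infty=(x^{-1}q;q)_\infty$; this is precisely $(\ref{macdonaldthm25eqn})$. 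Absolute convergence of the integral, and the assertion that $\CC^-$ encloses exactly the real poles of the integrand (which now accumulate at $-\infty$) and no others, transport verbatim from Theorem~\ref{macdonaldthm2} under $z\mapsto -z$. (Alternatively, one could re-run the contour analysis of \cite{C1} directly from Theorem~\ref{macdonaldthm1}, but routing through Theorem~\ref{macdonaldthm2} is shorter.)

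I expect the genuinely delicate point — rather than the routine algebra with $\Gamma_q$-factors, $q$-Pochhammer prefactors and the homogeneity normalization — to be the careful bookkeeping of the contour and its orientation through the inversion $z\mapsto -z$, so that the conditions on $M$ and $r$ and the ``encloses exactly the real poles'' statement come out exactly as in Theorem~\ref{macdonaldthm25}, together with justifying the inversion symmetry at the level of Laurent signatures rather than merely partitions.
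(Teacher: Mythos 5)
Your argument is correct, but it takes a genuinely different route from the paper's: Theorem \ref{macdonaldthm25} is quoted here from \cite{C1}, and the surrounding text describes it (together with Theorem \ref{macdonaldthm2}) as an analytic continuation in $\theta$ of the integer-$\theta$ formula of Theorem \ref{macdonaldthm1}, not as a consequence of the $|x|<1$ case. Your reduction — the inversion symmetry $P_{\lambda}(x_1^{-1},\dots,x_N^{-1};q,t)=P_{\bar{\lambda}}(x_1,\dots,x_N;q,t)$ with $\bar{\lambda}=(-\lambda_N,\dots,-\lambda_1)$, cancellation of the common homogeneity factor $t^{-(N-1)|\bar{\lambda}|}$ in numerator and denominator, Theorem \ref{macdonaldthm2} applied to $(\bar{\lambda},\,y=x^{-1})$, and finally $z\mapsto -z$ — does check out: the $\Gamma_q$-product reindexes exactly to the one in $(\ref{macdonaldthm25eqn})$, the requirement $M_0<\bar{\lambda}_N=-\lambda_1$ becomes $M>\lambda_1$, the orientation bookkeeping combined with $dz=-dw$ turns $\ln q/(1-q)$ into $\ln q/(q-1)$, and absolute convergence and the pole statement transport directly. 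What the paper's (i.e.\ \cite{C1}'s) route buys is a uniform treatment of both infinite-contour formulas straight from the integer-$\theta$ case, without invoking the inversion symmetry; what yours buys is brevity (granting Theorem \ref{macdonaldthm2}) and a structural explanation of why the two theorems are mirror images under $x\leftrightarrow x^{-1}$, $\lambda\leftrightarrow\bar{\lambda}$, $\CC^+\leftrightarrow\CC^-$. Two small repairs: first, under $\lambda\mapsto\bar{\lambda}$ the dominance order on signatures of equal size is preserved rather than reversed (harmless — all you need is triangularity with leading term $m_{\bar{\lambda}}$ plus invariance of the torus scalar product under $x_i\mapsto x_i^{-1}$, and that holds, with the extension to signatures by index-stability as you say); second, for $x\in(-\infty,-1)$ the step $y^{-z}=x^z$ fails by a factor $e^{2\pi\ii z}$ under the paper's branch convention (logarithm cut along $(-\ii\infty,0]$), so the negative real axis should be handled by continuity/analytic continuation in $x$ from the upper half-plane, with dominated convergence justifying continuity of the contour integral; for all other admissible $x$ the principal branch gives $\ln(1/x)=-\ln x$ and your computation stands as written.
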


\begin{center}
\begin{figure}
\begin{center}
\begin{tikzpicture}[decoration={markings,
mark=at position 1.5cm with {\arrow[line width=1pt]{>}},
mark=at position 3.2cm with {\arrow[line width=1pt]{>}},
mark=at position 4.7cm with {\arrow[line width=1pt]{>}},
mark=at position 6.5cm with {\arrow[line width=1pt]{>}},
mark=at position 8.3cm with {\arrow[line width=1pt]{>}},
mark=at position 10cm with {\arrow[line width=1pt]{>}},
mark=at position 11.8cm with {\arrow[line width=1pt]{>}}
}
]

\draw[help lines,->] (-4,0) -- (4,0) coordinate (xaxis);
\draw[help lines,->] (0,-1) -- (0,1) coordinate (yaxis);

\path[draw,line width=1pt,postaction=decorate] (4,0.5) -- (-2,0.5) -- (-2,-0.5) -- (4,-0.5);

\node[below] at (xaxis) {$\Re z$};
\node[left] at (yaxis) {$\Im z$};
\node[below left] {};
\end{tikzpicture}
\end{center}
\caption{Contour $\CC^+$}
\label{fig:C}
\end{figure}
\end{center}

\begin{figure}
\begin{center}
\begin{tikzpicture}[decoration={markings,
mark=at position 1.5cm with {\arrow[line width=1pt]{>}},
mark=at position 3.2cm with {\arrow[line width=1pt]{>}},
mark=at position 5cm with {\arrow[line width=1pt]{>}},
mark=at position 6.5cm with {\arrow[line width=1pt]{>}},
mark=at position 8cm with {\arrow[line width=1pt]{>}},
mark=at position 9.8cm with {\arrow[line width=1pt]{>}},
mark=at position 11.7cm with {\arrow[line width=1pt]{>}}
}
]
\draw[help lines,->] (-4,0) -- (4,0) coordinate (xaxis);
\draw[help lines,->] (0,-1) -- (0,1) coordinate (yaxis);

\path[draw,line width=1pt,postaction=decorate] (-4,-0.5) -- (2,-0.5) -- (2,0.5) -- (-4, 0.5);

\node[below] at (xaxis) {$\Re z$};
\node[left] at (yaxis) {$\Im z$};
\node[below left] {};
\end{tikzpicture}
\end{center}
\caption{Contour $\CC^-$}
\label{fig:Cminus}
\end{figure}

\begin{rem}
In the formulas above, $x^z = \exp(z\ln{x})$. If $x \notin (-\infty, 0)$, we can use the principal branch of the logarithm to define $\ln{x}$, and if $x \in (-\infty, 0)$, then we can define the logarithm in the complex plane cut along $(-\mathbf{i}\infty, 0]$ such that $\Im \ln{a} = 0$ for all $a \in (0, \infty)$.
\end{rem}

\subsection{Integral formulas for Jack characters}

We degenerate the integral formulas for Macdonald characters in the regime $t = q^{\theta}$, $q\rightarrow 1$, and obtain analogous formulas for Jack characters.

\begin{thm}\label{jackthm1}
Let $\theta\in\N$, $N\in\N$, $\lambda\in\GT_N$ and $x\in\C\setminus\{0, 1\}$. Then
\begin{equation}\label{jackthm1eqn}
J_{\lambda}(x; N, \theta) = \frac{(\theta N - 1)!}{(x-1)^{\theta N - 1}}\frac{1}{2\pi\ii}\oint_{\CC_0}{ \frac{x^z}{\prod_{i=1}^N\prod_{j=0}^{\theta -1}{({z - (\lambda_i + \theta(N-i) + j)})}} dz},
\end{equation}
where the contour $\CC_0$ is a closed positively oriented contour enclosing all poles $\{\lambda_i + \theta(N-i) + j : \ i = 1, 2, \dots, N, \ j = 0, 1, \dots, \theta - 1\}$.
\end{thm}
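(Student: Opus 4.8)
The plan is to obtain \eqref{jackthm1eqn} by taking the limit $q \to 1$ in the Macdonald character integral formula of Theorem~\ref{macdonaldthm1}, after first specializing the evaluation points to $t = q^\theta$ and the $N-m$ trivial variables to $1, t, t^2, \ldots, t^{N-m-1}$ with $m = 1$. Concretely, the left side of \eqref{macdonaldthm1eqn} is exactly $P_\lambda(x; N, q, t)$ in the notation of \eqref{normalmacdonald}, and by the degeneration \eqref{macdonaldtojack} together with the definition \eqref{normaljack} of Jack characters, we have $P_\lambda(x; N, q, q^\theta) \to J_\lambda(x; N, \theta)$ as $q \to 1$. (Here $\theta \in \N$ is fixed, so the product $\prod_{j=0}^{\theta-1}$ appearing in the Macdonald formula is a genuine finite product and survives the limit intact.) So the entire content of the theorem is to check that the right side of \eqref{macdonaldthm1eqn} converges to the right side of \eqref{jackthm1eqn}.

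First I would handle the prefactor: $\ln(1/q)\prod_{i=1}^{\theta N-1}\frac{1-q^i}{x-q^i}$. Writing $\ln(1/q) = -\ln q$ and using $1 - q^i = -i \ln q + O((\ln q)^2)$, the numerator contributes $\prod_{i=1}^{\theta N-1}(1-q^i) \sim (\theta N - 1)! \,(\ln(1/q))^{\theta N - 1}$, while the single factor $\ln(1/q)$ out front supplies one more power, for a total of $(\ln(1/q))^{\theta N}$; meanwhile the denominator $\prod_{i=1}^{\theta N - 1}(x - q^i) \to (x-1)^{\theta N - 1}$. So the prefactor behaves like $(\theta N - 1)!\,(\ln(1/q))^{\theta N}/(x-1)^{\theta N - 1}$ up to $1 + o(1)$. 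Next I would analyze the integrand: each factor $1 - q^{z - c}$ (with $c = \lambda_i + \theta(N-i)+j$) satisfies $1 - q^{z-c} = (z - c)\ln(1/q) + O((\ln q)^2)$ uniformly for $z$ on a fixed compact contour $\CC_0$ bounded away from the integer points $c$, so $\prod_{i,j}(1 - q^{z - (\lambda_i + \theta(N-i)+j)}) = (\ln(1/q))^{\theta N}\prod_{i,j}(z - (\lambda_i+\theta(N-i)+j))\cdot(1 + o(1))$, and the factor $x^z = \exp(z\ln x)$ is independent of $q$. The $(\ln(1/q))^{\theta N}$ from the prefactor and the $(\ln(1/q))^{-\theta N}$ hidden in the reciprocal of the integrand cancel exactly, leaving precisely the integrand of \eqref{jackthm1eqn}.

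The main technical point — and the step I expect to require the most care — is to justify interchanging the limit $q \to 1$ with the contour integral. Since $\CC_0$ is a fixed compact contour (say the rectangle with vertices $\pm M \pm r\ii$ with $r$ small and fixed, $M$ large and fixed) chosen once and for all to enclose exactly the finite pole set $\{\lambda_i + \theta(N-i)+j\}$ and to stay a fixed positive distance away from it, the integrand converges uniformly on $\CC_0$: one has explicit error bounds of the form $1 - q^{z-c} = (z-c)\ln(1/q)(1 + O(\ln(1/q)))$ with the $O$-constant controlled by $\sup_{z \in \CC_0}|z - c|$ and the distance $\mathrm{dist}(\CC_0, c)$, both fixed. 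Hence by dominated convergence (or simply uniform convergence of continuous integrands on a compact contour) the integral passes to the limit. A small bookkeeping check is needed to confirm that for small enough $q$ the integrand on the Macdonald side has no poles on or outside $\CC_0$ other than those enclosed — but the poles of $1/(1 - q^{z-c})$ lie at $z = c + \frac{2\pi k}{\ln(1/q)}\ii$ for $k \in \Z$, so the nonzero-$k$ ones escape to $\pm\ii\infty$ as $q \to 1$ and the residue computation is unaffected. Finally, the case $x \in (-\infty,0)$ is covered by the branch convention for $x^z$ stated in the Remark following Theorem~\ref{macdonaldthm25}, which is stable under $q \to 1$, so no separate argument is needed there. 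Assembling the prefactor asymptotics, the integrand asymptotics, and the cancellation of the powers of $\ln(1/q)$ yields \eqref{jackthm1eqn}.
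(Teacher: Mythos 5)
Your proposal is correct and follows essentially the same route as the paper: degenerate Theorem \ref{macdonaldthm1} under $t=q^{\theta}$, $q\to 1$, matching the prefactor and integrand asymptotics (the powers of $\ln(1/q)$, equivalently of $1-q$, cancel) and justifying the exchange of limit and integral by uniform convergence of the integrand on a fixed compact contour $\CC_0$ kept away from the poles. The only cosmetic difference is the endgame: the paper first establishes the identity for $x\in\C\setminus[0,1]$ and then extends to all $x\in\C\setminus\{0,1\}$ by noting both sides are rational in $x$, whereas you argue directly for every admissible $x$, which is also fine since for $q$ close enough to $1$ the excluded points $q,\dots,q^{\theta N-1}$ avoid any fixed $x\in(0,1)$ and the branch convention handles $x\in(-\infty,0)$.
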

\begin{proof}
Take any $x\in\C \setminus [0, 1]$. The identity above is essentially Theorem $\ref{macdonaldthm1}$ after the limit transition $t = q^{\theta}$, $q\rightarrow 1$. This is clear thanks to the following limits.
\begin{equation}\label{eqn:limitsjacks1}
\begin{gathered}
\lim_{\substack{t = q^{\theta} \\ q\rightarrow 1}}{\frac{P_{\lambda}(x, 1, t, \ldots, t^{N-2}; q, t)}{P_{\lambda}(1, t, \ldots, t^{N-1}; q, t)}} = \frac{J_{\lambda}(x, 1^{N-1}; \theta)}{J_{\lambda}(1^N; \theta)}\\
\lim_{q\rightarrow 1}{\prod_{i=1}^{\theta N - 1}{\frac{1}{x - q^i}}} = \frac{1}{(x - 1)^{\theta N - 1}}\\
\lim_{q\rightarrow 1}{ (1-q)^{-\theta N} \ln(1/q)\prod_{i=1}^{\theta N - 1}{(1 - q^i)} } = \lim_{q\rightarrow 1}{\frac{\ln{q}}{q - 1} \prod_{i=1}^{\theta N - 1}{[i]_q} } = (\theta N - 1)!\\
\lim_{q\rightarrow 1}{{  \frac{(1 - q)^{\theta N}}{\prod_{i=1}^N \prod_{j=0}^{\theta - 1}{(1 - q^{z - (\lambda_i + \theta(N - i) + j)}})  } } } = {\frac{1}{\prod_{i=1}^N \prod_{j=0}^{\theta - 1}{(z - (\lambda_i + \theta(N - i) + j))}}} ,
\end{gathered}
\end{equation}
the first limit is the convergence of Macdonald polynomials to Jack polynomials, see $(\ref{macdonaldtojack})$, the second limit is obvious, the third and fourth limit follow from basic $q$-analysis, see e.g. \cite[Ch. 10]{AAR}.

We still need to justify the exchange of limit and integral signs.
Let us look further at the fourth line in $(\ref{eqn:limitsjacks1})$ above. First observe that each factor $(z - (\lambda_i + \theta(N - i) + j))$, $1\leq i\leq N$, $0\leq j < \theta$, does not vanish for $z\in\CC_0$. Moreover each fraction
\begin{equation*}
\frac{1 - q^{z - (\lambda_i + \theta (N - i) + j)}}{1 - q}, \ 1\leq i\leq N, \ 0\leq j < \theta,
\end{equation*}
is analytic in the variables $(q, z)$ for $q$ in a neighborhood of $1$ and $z$ in a neighborhood of the contour $\CC_0$.
Moreover none of the fractions above vanishes if these neighborhoods are thin enough.
It follows that the fourth limit in $(\ref{eqn:limitsjacks1})$ is uniform for $q$ in a neighrborhood of $1$ and $z$ in a neighborhood of $\CC_0$.

We can now justify the exchange of the limit and integral by the dominated convergence theorem and the uniform convergence of the integrand, that we just proved. Then the identity $(\ref{jackthm1eqn})$ is proved for any $x\in\C\setminus [0, 1]$. To extend the identity for $x\in\C\setminus\{0, 1\}$, notice that both sides are rational functions on $x$ (the right hand side may be expressed as a sum of $\theta N$ residues at simple poles). The left side may have a pole at $x = 0$ while the right hand side has a pole at $x = 1$ and also it is not well defined at $x = 0$ because of $x^z = \exp(z\ln{x})$ in the integrand. Thus the identity holds for all $x\in\C\setminus\{0, 1\}$.
\end{proof}

\begin{rem}
Jack polynomials with parameter $\theta = 1$ become Schur polynomials $s_{\lambda}(x_1, \dots, x_N)$. In this special case, Theorem $\ref{jackthm1}$ recovers \cite[Thm. 3.8]{GP}.
\end{rem}

\begin{thm}\label{jackthm2}
Let $\theta > 0$, $N\in\N$, $\lambda\in\GT_N$. Also let $x\in\C \setminus \{0\}$, $|x| < 1$. Then the integral below converges absolutely and the identity holds
\begin{equation}\label{jackthm2eqn}
J_{\lambda}(x; N, \theta) = -\frac{\Gamma(\theta N)}{(1 - x)^{\theta N - 1}}\frac{1}{2\pi\ii}\int_{\CC^+}{x^z\prod_{i=1}^N{\frac{\Gamma(\lambda_i + \theta(N-i)-z)}{\Gamma(\lambda_i + \theta(N-i+1)-z)}}dz},
\end{equation}
where the positively oriented contour $\CC^+$ consists of the segment $[M+r\ii, \ M-r\ii]$ and horizontal lines $[M+r\ii, +\infty+r\ii)$, $[M-r\ii, +\infty-r\ii)$, for any $\lambda_N > M$ and any $r > 0$, see Figure $\ref{fig:C}$.
\end{thm}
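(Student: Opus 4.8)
The plan is to obtain \eqref{jackthm2eqn} as the $t=q^{\theta}$, $q\to 1^{-}$ degeneration of the Macdonald character formula of Theorem \ref{macdonaldthm2}, exactly in the spirit of the proof of Theorem \ref{jackthm1} but now with a non‑compact contour. Fix $M<\lambda_N$, $r>0$ and $x$ with $0<|x|<1$ as in the statement. For $q$ in a left neighborhood of $1$ one has $-\tfrac{\pi}{2\ln q}>r$, so Theorem \ref{macdonaldthm2} applies verbatim with the very contour $\CC^{+}$ appearing in Theorem \ref{jackthm2}, giving the identity for $\dfrac{P_{\lambda}(xt^{N-1},t^{N-2},\dots,1;q,t)}{P_{\lambda}(t^{N-1},\dots,1;q,t)}$ with $t=q^{\theta}$. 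It then remains to let $q\to 1^{-}$ on both sides. (For $\theta\in\N$ one can alternatively avoid Theorem \ref{macdonaldthm2} altogether: rewrite the rational integrand of Theorem \ref{jackthm1} in terms of Gamma ratios and push the right portion of the closed contour $\CC_0$ out to $+\infty$, which is legitimate since $|x|<1$ forces $|x^{z}|=|x|^{\Re z}e^{-\Im z\,\arg x}$ to decay exponentially on the horizontal tails; but the general $\theta>0$ case seems to require the $q\to 1$ route, as the set $\N$ has no accumulation point.)

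The factor‑by‑factor limits of the prefactors are routine. The left side tends to $J_{\lambda}(x,1^{N-1};\theta)/J_{\lambda}(1^{N};\theta)=J_{\lambda}(x;N,\theta)$ by $(\ref{macdonaldtojack})$ and continuity of Jack Laurent polynomials in their arguments (the denominator is nonzero by Theorem \ref{evaluationjacks}); $\tfrac{\ln q}{1-q}\to -1$; $\Gamma_{q}(\theta N)\to\Gamma(\theta N)$; and the $q$‑Pochhammer prefactor satisfies $\dfrac{(xt^{N};q)_{\infty}}{(xq;q)_{\infty}}=\dfrac{(q^{\theta N-1}\cdot xq;q)_{\infty}}{(xq;q)_{\infty}}\to(1-x)^{-(\theta N-1)}$, which follows from the $q$‑binomial theorem since $|xq|<1$ (equivalently, from the standard limit $(a;q)_{c}\to(1-a)^{c}$, \cite[Ch.~10]{AAR}). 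Multiplying these four limits, the constant in front of the integral converges to $-\Gamma(\theta N)/(1-x)^{\theta N-1}$, the prefactor in \eqref{jackthm2eqn}. Also, since $\Gamma_{q}\to\Gamma$ uniformly on compact sets avoiding the poles, the integrand converges, uniformly on compact subsets of $\CC^{+}$, to $x^{z}\prod_{i=1}^{N}\frac{\Gamma(\lambda_i+\theta(N-i)-z)}{\Gamma(\lambda_i+\theta(N-i+1)-z)}$.

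The main obstacle is justifying the exchange of $\lim_{q\to 1^{-}}$ with $\int_{\CC^{+}}$, which I would do by dominated convergence. On the compact segment $[M+r\ii,\,M-r\ii]$ the convergence is uniform, so the issue is the two horizontal rays, where $\Re z\to+\infty$. There $|x^{z}|=|x|^{\Re z}e^{\mp r\,\arg x}$ decays exponentially in $\Re z$ \emph{at a rate independent of $q$}, so it suffices to bound the $q$‑Gamma ratio on the rays uniformly for $q$ near $1$. Using $\Gamma_{q}(w)=(1-q)^{1-w}(q;q)_{\infty}/(q^{w};q)_{\infty}$ one turns it into $(1-q)^{\theta N}\prod_{i=1}^{N}\frac{(q^{\lambda_i+\theta(N-i+1)-z};q)_{\infty}}{(q^{\lambda_i+\theta(N-i)-z};q)_{\infty}}$, and the estimate needed is $\bigl|(uq^{\theta};q)_{\infty}/(u;q)_{\infty}\bigr|\le C_{q}$ for $|u|$ large — here $u=q^{\lambda_i+\theta(N-i)-z}$ has large modulus once $\Re z$ is large — with $C_{q}$ bounded as $q\to 1^{-}$; these are precisely the $q$‑Pochhammer estimates already used in \cite{C1} to establish the absolute convergence in Theorem \ref{macdonaldthm2}. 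This bounds the integrand on the rays by $C\,|x|^{\Re z}$ uniformly in $q$, which is integrable over $[M,\infty)$, so dominated convergence applies. In particular the limiting integral converges absolutely (this part of the statement also follows directly from Stirling's asymptotics, which give $\bigl|\prod_{i=1}^{N}\frac{\Gamma(\lambda_i+\theta(N-i)-z)}{\Gamma(\lambda_i+\theta(N-i+1)-z)}\bigr|=O(|\Re z|^{-\theta N})$ on the rays, so that the integrand decays both polynomially and exponentially). Assembling the prefactor limit and the integral limit yields \eqref{jackthm2eqn}; the contour constraint $-\tfrac{\pi}{2\ln q}>r$ disappears in the limit, leaving just $r>0$ and $M<\lambda_N$, and for $x\in(-\infty,0)$ we use the branch of $x^{z}$ fixed in the Remark above. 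I expect the uniform‑in‑$q$ domination on the rays to be the only genuinely delicate point; everything else is a limit computation parallel to the proof of Theorem \ref{jackthm1}.
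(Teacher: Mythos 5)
Your overall route is the same as the paper's (degenerate Theorem \ref{macdonaldthm2} at $t=q^{\theta}$, $q\to 1^{-}$, take the prefactor limits, and justify the exchange of limit and integral by dominated convergence), and the prefactor computations and the treatment of the compact part of $\CC^{+}$ are fine. However, the step you yourself flag as ``the only genuinely delicate point'' --- a domination on the horizontal rays that is uniform in $q$ near $1$ --- is precisely the heart of the matter, and your proposal does not actually supply it. The appeal to ``the $q$-Pochhammer estimates already used in \cite{C1} to establish the absolute convergence in Theorem \ref{macdonaldthm2}'' does not close this: those estimates are used for a \emph{fixed} $q\in(0,1)$ (absolute convergence of the $q$-integral for each $q$), and they give constants depending on $q$ with no control as $q\to 1^{-}$. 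If such uniform-in-$q$ bounds were already on record, the present paper would not need the lengthy argument it devotes to exactly this point.

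Moreover, the specific inequality you propose, $\bigl|(uq^{\theta};q)_{\infty}/(u;q)_{\infty}\bigr|\le C_{q}$ for $|u|$ large with $C_{q}$ bounded as $q\to 1^{-}$, is not correct in the regime you need it. Since $\Gamma_q(w)=(1-q)^{1-w}(q;q)_\infty/(q^{w};q)_\infty$, one has $(q^{a+\theta-z};q)_{\infty}/(q^{a-z};q)_{\infty}=(1-q)^{-\theta}\,\Gamma_q(a-z)/\Gamma_q(a+\theta-z)$, and for any fixed $z$ on the rays the $\Gamma_q$-ratio tends to the nonzero limit $\Gamma(a-z)/\Gamma(a+\theta-z)$, so the Pochhammer ratio diverges like $(1-q)^{-\theta}$. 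Your ``$|u|$ large'' proviso does not rescue this, because $|u|=q^{a-\Re z}$: as $q\to1^{-}$, keeping $|u|$ large forces $\Re z$ to grow at a $q$-dependent rate, so ``$|u|$ large'' and ``$\Re z\ge K$ on the ray'' decouple, and the needed bound must hold for all $z$ on the rays and all $q$ near $1$ simultaneously. What is actually required is the uniform estimate $\sup_{q\in[\delta,1)}\sup_{z\in\CC^{+}}\bigl|\prod_{i=1}^{N}\Gamma_q(\lambda_i+\theta(N-i)-z)/\Gamma_q(\lambda_i+\theta(N-i+1)-z)\bigr|=O(1)$, and the paper proves it by a dedicated argument: one shows $|1-q^{\theta-z}|\le|1-q^{-z}|$ (equivalently $|[\theta-z]_q|\le|[-z]_q|$) for $|\Im z|=r$, $\Re z\ge K>\theta$ and $q$ close to $1$, then iterates the functional equation $\Gamma_q(w+1)=[w]_q\Gamma_q(w)$ to push $z$ into the compact window $K\le\Re z\le K+1$, where the uniform convergence $\Gamma_q\to\Gamma$ on compacta avoiding the poles gives the bound; the portion of $\CC^{+}$ with $\Re z\le L$ is handled by the same uniform convergence. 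Without an argument of this kind (or some substitute giving uniformity in $q$ on the rays), the dominated-convergence step, and hence the proof, is incomplete.
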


\begin{proof}
Just like Theorem $\ref{jackthm1}$ follows from Theorem $\ref{macdonaldthm1}$, Theorem $\ref{jackthm2}$ will follow from Theorem $\ref{macdonaldthm2}$ after the limit transition $t = q^{\theta}$, $q\rightarrow 1^-$. The additional limits that we need are
\begin{gather*}
\lim_{q\rightarrow 1^-}{\frac{\ln{q}}{1-q}} = -1,\
\lim_{q\rightarrow 1^-}{\frac{(xq^{\theta N}; q)_{\infty}}{(xq; q)_{\infty}}} = (1 - x)^{1 - \theta N},\\
\lim_{q\rightarrow 1^-}{\Gamma_q(\theta N)} = \Gamma(\theta N), \hspace{.2in} \lim_{q\rightarrow 1^-}{\prod_{i=1}^N{ \frac{\Gamma_q(\lambda_i + \theta(N - i) - z)}{\Gamma_q(\lambda_i + \theta(N - i + 1) - z)} }} = \prod_{i=1}^N{ \frac{\Gamma(\lambda_i + \theta(N - i) - z)}{\Gamma(\lambda_i + \theta(N - i + 1) - z)} },
\end{gather*}
see \cite[Ch. 10]{AAR}. Let $\CC^+$ be a contour as the one in the statement of Theorem $\ref{jackthm2}$, for some $\lambda_N > M$ and some $r > 0$. Observe that for $q\in [\delta, 1)$ and $\delta \in (0, 1)$ very close to $1$, we have $0 < r < -\pi/(2\ln{q})$ and therefore the contour $\CC^+$ is also appropriate for the integral representation of Theorem $\ref{macdonaldthm2}$. Because of the limits as $q\rightarrow 1^-$ stated above, we simply need to justify the exchange between the limit and integral signs, and also to show the absolute convergence of the integral in $(\ref{jackthm2eqn})$.

We claim that
\begin{equation}\label{needbound1}
\sup_{q\in[\delta, 1]}\sup_{z\in\CC^+}{\left|\prod_{i=1}^N{\frac{\Gamma_q(\lambda_i + \theta(N - i) - z)}{\Gamma_q(\lambda_i + \theta(N - i + 1) - z)}}\right|} = O(1),
\end{equation}
for some $0 < \delta < 1$ very close to $1$.
Let us show that the theorem follows from the claim above, and we prove the claim later.
Since $|x| < 1$, the integral $\int_{\CC^+}{x^z dz}$ is absolutely convergent.
As a consequence of the claim $(\ref{needbound1})$, the absolute value of the integrand in $(\ref{jackthm2eqn})$ is upper bounded by a constant times $|x^z|$, so the absolute convergence of the integral in the theorem holds.
The claim above, and the dominated convergence theorem, also shows that the limit $q\rightarrow 1^-$ and integral signs can be interchanged and thus the theorem is proved.

We only need to prove the claim $(\ref{needbound1})$.
Let us first prove such a bound for $z\in\CC^+$ with $\Re z$ large enough.
It suffices to bound one of the $N$ ratios of $q$-Gamma functions, because they are all similar.
Since $\Im (\lambda_i + \theta(N - i)) = 0$ and $|\Im z| = r \in (0, -\pi/(2\ln{q}))$ is constant for $z\in\CC^+$ with $\Re z$ large, it suffices to show
\begin{equation}\label{needbound}
\sup_{q\in[\delta, 1]}\sup_{\substack{|\Im z| = r \\ \Re z \geq K}}{\left|\frac{\Gamma_q(-z)}{\Gamma_q(\theta - z)}\right|} = O(1),
\end{equation}
for some $K$ large enough, and $0 < \delta < 1$ close enough to $1$.

Let $a = a(q), b = b(q)\in (0, 1)$, $a^2 + b^2 = 1$, be such that $q^{\pm r\ii} = a \pm b\ii$, that is, $a = \cos(r\ln{q})$ and $b = \sin(r\ln{q})$. Thus for any $z\in\C$ with $|\Im z| = r$, $\Re z \geq K$, we find
\begin{equation*}
|1 - q^{-z}|^2 = \left|1 - q^{-\Re z}(a \pm b\ii)\right|^2 = (1 - q^{-\Re z}a)^2 + (q^{-\Re z} b)^2 = 1 - 2q^{-\Re z}a + q^{-2\Re z}
\end{equation*}
and similarly
\begin{equation*}
|1 - q^{\theta - z}|^2 = 1 - 2q^{-\Re z + \theta}a + q^{-2\Re z + 2\theta}.
\end{equation*}
From above, the inequality $|1 - q^{\theta - z}| \leq |1 - q^{-z}|$ is equivalent to $q^{-\Re z}(1 + q^{\theta}) \geq 2a = 2\cos(r\ln{q})$. And in fact, $q^{-K}(1 + q^{\theta}) \geq 2\cos(r\ln{q})$ holds for any $K > \theta$ because the function $f(q) = q^{-K}(1 + q^{\theta}) - 2\cos(r\ln{q})$ can be shown to be nonincreasing in an interval of the form $[\delta, 1]$, for $\delta < 1$ very close to $1$, and $f(1) = 0$. The inequality we just showed is equivalent to $|[\theta - z]_q| \leq |[-z]_q|$, which implies that for any $K > \theta$:
\begin{equation*}
\left| \frac{\Gamma_q(-z)}{\Gamma_q(\theta-z)} \right| = \left| \frac{\Gamma_q(-z+1)}{\Gamma_q(\theta-z+1)} \right|\times\left| \frac{[\theta-z]_q}{[-z]_q} \right| \leq \left| \frac{\Gamma_q(-z+1)}{\Gamma_q(\theta-z+1)} \right| \ \forall q \in [\delta, 1], \ |\Im z| = r, \ \Re z \geq K.
\end{equation*}

By applying the inequality above repeatedly, we deduce
\begin{equation}\label{eqn:ineq2}
\sup_{q\in[\delta, 1]}\sup_{\substack{|\Im z| = r \\ \Re z \geq K}}{\left|\frac{\Gamma_q(-z)}{\Gamma_q(\theta - z)}\right|}
\leq \sup_{q\in[\delta, 1]}\sup_{\substack{|\Im z| = r \\ K + 1 \geq \Re z \geq K}}{\left|\frac{\Gamma_q(-z)}{\Gamma_q(\theta - z)}\right|}.
\end{equation}

Next recall that $\lim_{q\rightarrow 1^-}{\Gamma_q(x)} = \Gamma(x)$ holds uniformly on compact subsets of $\C\setminus\{\dots, -2, -1, 0\}$. Also the set $\{z\in\C : |\Im z| = r, \ K + 1 \geq \Re z \geq K\}$ is a compact subset of $\C\setminus\{\dots, -2, -1, 0\}$. Thus, by making the value of $\delta$ closer to $1$ if necessary, we deduce
\begin{equation}\label{eqn:ineq3}
\sup_{q\in[\delta, 1]}\sup_{\substack{|\Im z| = r \\ K + 1 \geq \Re z \geq K}}{\left|\frac{\Gamma_q(-z)}{\Gamma_q(\theta - z)}\right|} \leq \sup_{\substack{|\Im z| = r \\ K + 1 \geq \Re z \geq K}}{\left|\frac{\Gamma(-z)}{\Gamma(\theta - z)}\right|} + 1 = O(1).
\end{equation}
From $(\ref{eqn:ineq2})$ and $(\ref{eqn:ineq3})$, the bound $(\ref{needbound})$ is proved.

We are left to look at values $z\in\CC^+$ with $\Re z \leq L$, $L$ is a fixed real number, and $q\in [\delta, 1]$, for some $\delta \in (0, 1)$ close enough to $1$. We again use the fact that $\lim_{q\rightarrow 1^-}{\Gamma(x)} = \Gamma(x)$ holds uniformly on compact subsets of $\C\setminus\{0, -1, \dots\}$. Also note that, by definition of the contour $\CC^+$, the integrand of $(\ref{jackthm2eqn})$ is holomorphic in a neighborhood of the compact subset $\{z \in \CC^+ : \Re z \leq L\}$. From the same reasoning as above, there exists $\delta\in (0, 1)$ close enough to $1$ such that
\begin{equation}\label{needbound2}
\sup_{\substack{q\in [\delta, 1] \\ z\in\CC^+, \Re z\leq L}}\left| \prod_{i=1}^N{\frac{\Gamma_q(\lambda_i + \theta(N - i) - z)}{\Gamma_q(\lambda_i + \theta(N - i + 1) - z)}} \right|
\leq \sup_{\substack{z\in\CC^+ \\ \Re z \leq L}}\left| \prod_{i=1}^N{\frac{\Gamma(\lambda_i + \theta (N - i) - z)}{\Gamma(\lambda_i + \theta(N - i + 1) - z)}} \right| + 1 = O(1).
\end{equation}
From the estimates $(\ref{needbound})$ and $(\ref{needbound2})$, the claim $(\ref{needbound1})$ follows. Thus the theorem is proved.
\end{proof}

The next theorem follows from Theorem $\ref{macdonaldthm25}$ in the same way as Theorem $\ref{jackthm2}$ above followed from Theorem $\ref{macdonaldthm2}$, so we omit a proof.

\begin{thm}\label{jackthm25}
Let $\theta > 0$, $N\in\N$, $\lambda\in\GT_N$ and $x\in\C$, $|x| > 1$. Then the integral below converges absolutely and the identity holds
\begin{equation}\label{jackthm25eqn}
J_{\lambda}(x; N, \theta) = \frac{\Gamma(\theta N)}{(1 - x^{-1})^{\theta N - 1}}\frac{1}{2\pi\ii}\int_{\CC^-}{x^z\prod_{i=1}^N{\frac{\Gamma(z - (\lambda_i -\theta i + \theta))}{\Gamma(z - (\lambda_i - \theta i) )}}dz},
\end{equation}
where the positively oriented contour $\CC^-$ consists of the segment $[M + r\ii, \ M - r\ii]$ and the horizontal lines $[M + r\ii, -\infty+r\ii)$, $[M - r\ii, -\infty - r\ii)$, for any $M > \lambda_1$ and any $r > 0$, see Figure $\ref{fig:Cminus}$.
\end{thm}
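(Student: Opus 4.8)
The plan is to deduce Theorem~\ref{jackthm25} from Theorem~\ref{macdonaldthm25} via the limit transition $t = q^{\theta}$, $q\to 1^{-}$, following verbatim the scheme by which Theorem~\ref{jackthm2} was obtained from Theorem~\ref{macdonaldthm2}. First I would fix a contour $\CC^{-}$ as in the statement, for some $M > \lambda_1$ and some $r > 0$, and note that for $q$ in an interval $[\delta, 1)$ with $\delta$ close enough to $1$ one has $0 < r < -\pi/(2\ln q)$, so that the very same $\CC^{-}$ is admissible in Theorem~\ref{macdonaldthm25}. Then I would record the pointwise limits as $q\to 1^{-}$ along $t = q^{\theta}$: the ratio $P_{\lambda}(x, t, \dots, t^{N-1}; q, t)/P_{\lambda}(1, t, \dots, t^{N-1}; q, t)$ tends to $J_{\lambda}(x; N, \theta)$ by $(\ref{macdonaldtojack})$ and Definition~\ref{jackunitaries}; $\frac{\ln q}{q - 1}\to 1$; $\frac{(x^{-1}q^{\theta N}; q)_{\infty}}{(x^{-1}q; q)_{\infty}}\to (1 - x^{-1})^{1 - \theta N}$ by the $q$-binomial limit; $\Gamma_q(\theta N)\to\Gamma(\theta N)$; and $\prod_{i=1}^N\frac{\Gamma_q(z - (\lambda_i - \theta i + \theta))}{\Gamma_q(z - (\lambda_i - \theta i))}\to\prod_{i=1}^N\frac{\Gamma(z - (\lambda_i - \theta i + \theta))}{\Gamma(z - (\lambda_i - \theta i))}$; all but the first of these are routine $q$-analysis, cf.\ \cite[Ch.~10]{AAR}. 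In particular the overall $+$ sign and the power $(1 - x^{-1})^{-(\theta N - 1)}$ appearing in $(\ref{jackthm25eqn})$ come precisely from $\frac{\ln q}{q-1}\to 1$ and the $q$-binomial limit. Multiplying the limits out reproduces the right-hand side of $(\ref{jackthm25eqn})$, provided the limit can be moved inside the contour integral.

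The only substantial point, and the main obstacle, is to justify this interchange together with the absolute convergence of the integral in $(\ref{jackthm25eqn})$. Just as in the proof of Theorem~\ref{jackthm2}, everything reduces to the $\CC^{-}$-version of the claim $(\ref{needbound1})$:
\begin{equation*}
\sup_{q\in[\delta, 1]}\ \sup_{z\in\CC^{-}}\left|\ \prod_{i=1}^N\frac{\Gamma_q(z - (\lambda_i - \theta i + \theta))}{\Gamma_q(z - (\lambda_i - \theta i))}\ \right| = O(1)
\end{equation*}
for some $\delta < 1$ close to $1$. Granting this, since $|x| > 1$ the two horizontal rays of $\CC^{-}$ run off to $\Re z = -\infty$, where $|x^z| = |x|^{\Re z}\to 0$, so $\int_{\CC^{-}}|x^z|\,|dz| < \infty$; the integrand of $(\ref{jackthm25eqn})$ --- and, for $q\in[\delta, 1)$, the corresponding Macdonald integrand --- is then dominated by a constant multiple of $|x^z|$ uniformly in $q$, and the dominated convergence theorem yields the theorem.

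To prove the bound I would split $\CC^{-}$ into the compact piece $\{z\in\CC^{-} : \Re z\geq -K\}$ and the unbounded piece $\{z\in\CC^{-} : \Re z\leq -K\}$, for a large constant $K$. On the compact piece, which stays away from the (real, negatively accumulating) poles of the integrand thanks to $M > \lambda_1$ and $r > 0$, the uniform convergence of $\Gamma_q$ to $\Gamma$ on compact subsets of $\C\setminus\{0, -1, -2, \dots\}$ bounds the product, for $\delta$ close enough to $1$, by $1 + \sup_z\left|\prod_{i=1}^N\frac{\Gamma(z - (\lambda_i - \theta i + \theta))}{\Gamma(z - (\lambda_i - \theta i))}\right| = O(1)$. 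On the unbounded piece it suffices to bound a single factor $\Gamma_q(w - \theta)/\Gamma_q(w)$, where $w = z - (\lambda_i - \theta i)$ has $\Re w\leq -K'$ and $|\Im w| = r$: the functional equation $\Gamma_q(u + 1) = [u]_q\Gamma_q(u)$ gives $\frac{\Gamma_q(w - \theta)}{\Gamma_q(w)} = \frac{\Gamma_q(w - \theta + 1)}{\Gamma_q(w + 1)}\cdot\frac{[w]_q}{[w - \theta]_q}$, and $|[w]_q|\leq|[w - \theta]_q|$, equivalently $|1 - q^{w}|\leq|1 - q^{w - \theta}|$, holds once $\Re w$ is sufficiently negative and $q$ sufficiently close to $1$ --- this is exactly the elementary monotonicity estimate already carried out in the proof of Theorem~\ref{jackthm2} (there in the form $|1 - q^{\theta - z}|\leq|1 - q^{-z}|$, equivalent to $q^{-\Re z}(1 + q^{\theta})\geq 2\cos(r\ln q)$). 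Iterating this inequality brings $\Re w$ into the compact strip, where $\Gamma_q\to\Gamma$ uniformly once more. Combining the two estimates establishes the bound, and hence Theorem~\ref{jackthm25}.
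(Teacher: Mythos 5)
Your proposal is correct and follows exactly the route the paper intends: the paper omits this proof precisely because it "follows from Theorem \ref{macdonaldthm25} in the same way as Theorem \ref{jackthm2} followed from Theorem \ref{macdonaldthm2}," and your mirrored version of the claim $(\ref{needbound1})$ — including the functional-equation iteration with $|[w]_q|\leq|[w-\theta]_q|$ for $\Re w$ sufficiently negative and the uniform convergence $\Gamma_q\to\Gamma$ on the compact piece — is the right way to fill in that omitted argument, with the sign and the power of $(1-x^{-1})$ coming from $\frac{\ln q}{q-1}\to 1$ and the $q$-binomial limit exactly as you state.
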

\begin{rem}
The assumptions $|x| < 1$ and $|x| > 1$, for Theorems $\ref{jackthm2}$ and $\ref{jackthm25}$ respectively, guarantee the absolute convergence of the integrals.
It might be possible to obtain a similar integral representation in the case $|x| = 1$, but this is not so clear; since we do not need such a formula for the application we have in mind, we ignore this issue.
\end{rem}

\begin{rem}\label{jackrewritten}
After the change of variables $z\mapsto z-\theta N + 1$, we can rewrite $(\ref{jackthm25eqn})$ as
\begin{equation*}
J_{\lambda}(x; N, \theta) = \frac{\Gamma(\theta N)}{(x - 1)^{\theta N - 1}}\frac{1}{2\pi\ii}\int_{\CC^-}{x^z\prod_{i=1}^N{\frac{\Gamma(z + 1 - (\lambda_i + \theta (N - i + 1)))}{\Gamma(z + 1 - (\lambda_i + \theta (N - i)) )}}dz},
\end{equation*}
where $\CC^-$ is an appropriate contour, that looks as in Figure $\ref{fig:Cminus}$. It is in this form that the formula is applied in Section $\ref{sec:deformationapp1}$ below.
\end{rem}

\section{Pieri integral formula for Jack characters}\label{sec:pieri}

\subsection{The Pieri integral formula}

In this section, we state and prove formula $(\ref{eqn:pieri})$ which we call the \textit{Pieri integral formula}, and in the section below we describe an interpretation of this formula as the computation of structure constants of a virtual hypergroup of conjugacy classes.

\begin{thm}\label{thm:pieri}
Let $\nu\in\GT_N$ be any signature, let $m, N\in\N$ be such that $1\leq m\leq N-1$. Consider also any real numbers
\begin{equation*}
\begin{gathered}
1 > x > x_m > \ldots > x_2 > x_1 > 0,\\
x > \max_{i = 1, 2, \ldots, m-1}{(x_i / x_{i+1})}.
\end{gathered}
\end{equation*}
Then
\begin{equation}\label{eqn:pieri}
\begin{gathered}
J_{\nu}(x_1, \ldots, x_m; N, \theta)J_{\nu}(x; N, \theta) = \frac{\Gamma(N\theta)x^{m\theta}}{\Gamma((N-m)\theta)\Gamma(\theta)^m(1-x)^{m\theta}\prod_{i=1}^m{(1-x_i)^{\theta}}}\\
\times\int\dots\int { G(x_1, \ldots, x_m, x; w_1, \ldots, w_m; \theta)\left( F(x_1, \ldots, x_m, x; w_1, \ldots, w_m) \right)^{\theta(N-m)-1}\prod_{i=1}^m{(1 - w_i)^{\theta-1}} }\\
{ J_{\nu}\left(x_1w_1, \ldots, x_mw_m, \frac{x}{w_1\cdots w_m}; N, \theta\right) \frac{dw_1}{w_1}\dots\frac{dw_m}{w_m} },
\end{gathered}
\end{equation}
where the functions $G$ and $F$ are defined as follows (we denote $w_{m+1} := x/(w_1\cdots w_m)$ and $x_{m+1} := 1$ to simplify notation):
\begin{equation}\label{defn:FGfns}
\begin{gathered}
G(x_1, \ldots, x_m, x; w_1, \ldots, w_m; \theta) \myeq (w_1^m w_2^{m-1}\cdots w_m)^{-\theta} \prod_{1\leq i < j\leq m}{\left(1 - x_ix_j^{-1}w_i\right)^{\theta-1}}\\
\times\prod_{1\leq i<j\leq m+1}{(1 - x_iw_ix_j^{-1}w_j^{-1}) (1 - x_ix_j^{-1}w_j^{-1})^{\theta-1} }\prod_{1\leq i < j\leq m}{(1 - x_ix_j^{-1})^{1 - 2\theta}},\\
F(x_1, \ldots, x_m, x; w_1, \ldots, w_m) \myeq (1 - x)^{-1} \prod_{j=1}^m{(1 - x_j)^{-1}} \prod_{i=1}^{m+1}{(1 - x_iw_i)},
\end{gathered}
\end{equation}
and the domain of integration $\U_x$ in the integral of $(\ref{eqn:pieri})$ depends only on $x$ and is the compact subset of $\R^m$ defined by the inequalities
\begin{equation}\label{defn:Ax}
\begin{gathered}
1 \geq w_1, \ldots, w_m \geq 0,\\
w_1\cdots w_m \geq x.
\end{gathered}
\end{equation}
\end{thm}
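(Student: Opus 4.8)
The plan is to derive the Pieri integral formula as the ``dual'' of the Pieri rule for Jack polynomials (Theorem~\ref{thm:pierirule} in the Macdonald case, degenerated to $\theta$), using the index-argument symmetry (Theorem~\ref{symmetry}) to transfer identities about polynomials evaluated at geometric progressions into identities about characters. First I would fix $m < N$ and restrict attention to \emph{partitions} (positive signatures) $\nu$ of length $\leq m$; for such $\nu$, the substitution principle via Theorem~\ref{symmetry} lets me write $J_\nu(x_1,\dots,x_m;N,\theta)$ in terms of a Jack polynomial in the $\mu$-variable evaluated at the point $(x_1 q^{\cdots},\dots)$ in the Macdonald world, or directly in the Jack world after the $q\to 1$ limit. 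Concretely, I expect the engine to be: the Pieri rule $Q_\mu g_p = \sum_\lambda \psi_{\lambda/\mu} Q_\lambda$ (summing over $\lambda\in\GTp_{N+1}$ with $\mu\prec\lambda$, $|\lambda|-|\mu|=p$) becomes, after applying the index-argument symmetry and summing over $p\geq 0$ weighted by $x^p$, an identity expressing $J_\nu(x_1,\dots,x_m;N,\theta)\cdot(\text{one-row factor in }x)$ as a sum over interlacing $\lambda$ of branching coefficients times $J_\nu$-values of $m$ variables with one extra variable inserted. The appearance of $x^{|\lambda|-|\mu|}$ in the branching rule (Theorem~\ref{branchingjacks}) is what produces the extra variable $x/(w_1\cdots w_m)$ and the constraint $w_1\cdots w_m\geq x$ in $\U_x$.

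The second main step is to recognize the resulting sum over signatures $\lambda$ (equivalently, over the interlacing coordinates) as a Riemann sum for the claimed $m$-dimensional integral over $\U_x$. In the Macdonald setting this sum is literally a $q$-integral (as flagged in the introduction); degenerating $t=q^\theta$, $q\to 1$ turns the $q$-integral into an honest Lebesgue integral, and the explicit branching coefficients $\psi_{\lambda/\mu}(\theta)$ — which are products of ratios of Pochhammer symbols — degenerate, via $(x)_n=\Gamma(x+n)/\Gamma(x)$ and Stirling-type asymptotics applied to the pieces whose arguments scale linearly in the summation variable, into exactly the product of Gamma-power factors assembled in $G$ and $F^{\theta(N-m)-1}\prod(1-w_i)^{\theta-1}$. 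I would change variables from the $\lambda$-coordinates to $w_i$ (roughly, $w_i$ records the ``fractional part'' by which the $i$-th coordinate of $\lambda$ exceeds that of $\mu$, rescaled), track the Jacobian $\prod dw_i/w_i$, and identify the prefactor $\Gamma(N\theta)/(\Gamma((N-m)\theta)\Gamma(\theta)^m)$ from the normalization $J_\nu(1^N;\theta)$ together with the evaluation identity (Theorem~\ref{evaluationjacks}) applied to $g_p$'s dual-Jack normalization.

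The final step is analytic continuation in $\nu$. The identity will first be proved for positive signatures $\nu\in\GTp_m$ (or $\GTp_N$ with at most $m$ nonzero parts); both sides are, after clearing denominators, entire or rational in the $\nu_i$ viewed as continuous parameters (the Gamma-function expressions and the Jack characters depend on $\nu$ through shifts in Gamma arguments and through the polynomial coefficients, which are rational in $\theta$ and polynomial in the $\nu_i$ after the index-stability normalization), so the identity extends from the Zariski-dense set of dominant integral $\nu$ to all $\nu\in\GT_N$; in particular the index-stability $(\ref{indexstabilityjack})$ handles the passage from partitions to general signatures by a common shift. The hard part will be Step~2: justifying that the discrete sum of branching coefficients genuinely converges to the stated integral \emph{uniformly enough} to pass the $q\to 1$ limit under the sum, and bookkeeping the many Pochhammer/Gamma factors so that they reassemble precisely into $G$ and $F$ with the correct exponents — in particular getting the exponents $\theta-1$, $2\theta-1$, $1-2\theta$, and $\theta(N-m)-1$ and the leading $x^{m\theta}$ right, and verifying that the region $\U_x$ together with the hypothesis $x>\max_i(x_i/x_{i+1})$ is exactly what keeps every factor $(1-x_i x_j^{-1}w_i)$, $(1-w_i)$, etc. nonnegative so the integrand is well defined. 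I would organize this by first doing the case $m=1$ in full (matching Theorem~\ref{thm:pierisample}) to calibrate all constants, then induct on $m$ using the branching rule one variable at a time.
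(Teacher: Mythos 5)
Your skeleton coincides with the paper's proof: start from the Macdonald Pieri rule (Theorem \ref{thm:pierirule}), rewrite it via the index--argument symmetry (Theorem \ref{symmetry}) as an identity for Macdonald characters of $m$ and of one variable, recognize the resulting sum over interlacing $\lambda$ as a $q$-integral, and degenerate $t=q^{\theta}$, $q\to 1$ with $\mu_i=\lfloor\epsilon^{-1}y_i\rfloor$, $k_i=\lfloor\epsilon^{-1}z_i\rfloor$, so that the sum becomes the stated integral, the constants coming from the evaluation identity and the explicit $\psi_{\lambda/\mu}$, and general signatures handled at the end by index-stability. However, two steps as you describe them would fail. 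First, you have the roles of $\nu$ and $\mu$ reversed: the restriction to partitions of length $\leq m$ applies to the Pieri-rule index $\mu$ (and to $\lambda\in\GTp_{m+1}$), since it is $\mu$, padded with zeros, that supplies the evaluation point $(q^{\mu_1}t^{N-1},\ldots,q^{\mu_m}t^{N-m},t^{N-m-1},\ldots,1)$ of the $m$-variable character, while the signature $\nu$ parametrizing the character stays an arbitrary element of $\GTp_N$ throughout. Hence no analytic continuation in $\nu$ is needed (only the shift $(\ref{indexstabilityjack})$ to reach $\GT_N$), and the continuation you propose could not work anyway: signatures $\nu$ with at most $m$ nonzero parts lie in the proper subvariety $\{\nu_{m+1}=\cdots=\nu_N=0\}$, which is not Zariski-dense in the $N$ parameters, so an identity proved only there does not extend to all of $\GT_N$ by rationality in the $\nu_i$.

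Second, summing the Pieri rule over $p\geq 0$ with weight $x^p$ is the wrong move. After the symmetry, the normalized one-row factor evaluated at $(q^{\nu_1}t^{N-1},\ldots,q^{\nu_N})$ is already the one-variable character $P_{\nu}(q^{p}t^{N-1};N,q,t)$; weighting by $x^p$ and summing produces a generating function over evaluation points, not the factor $J_{\nu}(x;N,\theta)$ appearing on the left of $(\ref{eqn:pieri})$. The correct move is to keep a single $p=k$ and scale $k=\lfloor\epsilon^{-1}y\rfloor$, so that $q^{k}t^{N-1}\to e^{-y}=x$. This is also where the hypotheses really enter: the gap conditions $\mu_i-\mu_{i+1}>k$ and $\mu_m>k$ (which become $x>x_i/x_{i+1}$ and $x>x_m$ under $x_i=e^{-y_i}$, $x=e^{-y}$) are exactly what make the sum over interlacing $\lambda$ with $|\lambda|-|\mu|=k$ a free sum over $\{k_1,\ldots,k_m\geq 0,\ \sum_i k_i\leq k\}$, i.e.\ a Riemann sum over the simplex that becomes $\U_x$, with the extra variable $x/(w_1\cdots w_m)$ coming from $\lambda_{m+1}=k-\sum_i k_i$ rather than from the branching weight; they are not merely positivity conditions on the integrand. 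With these two corrections, the remaining plan (uniform $q\to 1$ estimates on the compact simplex and the Gamma-factor bookkeeping into $F$, $G$ and the prefactor) is precisely the paper's argument.
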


\begin{proof}
Let $\mu\in\GTp_m$, $k\in\Zp$. Begin with the Pieri rule for Macdonald polynomials, Theorem $\ref{thm:pierirule}$, which we write as
\begin{equation}\label{proof:pieri1}
\frac{Q_{\mu}(x_1, \ldots, x_N; q, t)}{Q_{\mu}(t^{N-1}, \ldots, t, 1; q, t)}\frac{g_k(x_1, \ldots, x_N; q, t)}{g_k(t^{N-1}, \ldots, t, 1; q, t)} = \sum_{\lambda}{C(\lambda, \mu; k; q, t) \frac{Q_{\lambda}(x_1, \ldots, x_N; q, t)}{Q_{\lambda}(t^{N-1}, \ldots, t, 1; q, t)}},
\end{equation}
the sum being over $\lambda\in\GTp_{m+1}$, $\mu\prec\lambda$, $|\lambda| - |\mu| = k$, and the coefficients are given by
\begin{equation*}
C(\lambda, \mu; k; q, t) \myeq \frac{\psi_{\lambda/\mu}(q, t)\cdot Q_{\lambda}(t^{N-1}, \ldots, t, 1; q, t)}{Q_{\mu}(t^{N-1}, \ldots, t, 1; q, t)\cdot g_k(t^{N-1}, \ldots, t, 1; q, t)}.
\end{equation*}
At this moment, let us make an important observation. If we assume $\mu_i - \mu_{i+1} > k$ for all $i = 1, 2, \ldots, m-1$ and $\mu_m > k$, then given any $k_1, \ldots, k_m\in\Zp$ with $k_1 + \ldots + k_m \leq k$, we can obtain $\lambda\in\GTp_{m+1}$ such that $\mu\prec\lambda$, $|\lambda| - |\mu| = k$, by setting
\begin{equation*}
\begin{gathered}
\lambda_i = \mu_i + k_i \ \forall i = 1, 2, \ldots, m, \hspace{.2in} \lambda_{m+1} = k - \sum_{i=1}^m{k_i}.
\end{gathered}
\end{equation*}
Conversely any such $\lambda\in\GTp_{m+1}$ arises from an $m$-tuple $(k_1, \ldots, k_m)\in\Zp^m$, $k_1 + \dots + k_m \leq k$, in such a way.

Let $\nu = (\nu_1 \geq \nu_2 \geq \ldots \geq \nu_N)\in\GTp_N$ be any positive signature of length $N$. In equation $(\ref{proof:pieri1})$, let $x_i = q^{\nu_i}t^{N-i} \ \forall i = 1, 2, \ldots, N$. Using the definition of the dual Macdonald polynomials $Q_{\mu} = b_{\mu}P_{\mu}$ and the index-argument symmetry of Theorem $\ref{symmetry}$, we deduce that if $\mu_i - \mu_{i+1} > k$, $\mu_m > k$, then equation $(\ref{proof:pieri1})$ leads to
\begin{equation}\label{proof:pieri2}
\begin{gathered}
P_{\nu}(q^{\mu_1}t^{N-1}, q^{\mu_2}t^{N-2}, \ldots, q^{\mu_m}t^{N - m}; N, q, t)\cdot  P_{\nu}(q^k t^{N-1}; N, q, t)\\
= \sum_{\substack{k_1, \ldots, k_m\in\Zp \\ k_1 + \ldots + k_m \leq k}}{C(\lambda, \mu; k; q, t) P_{\nu}(q^{\lambda_1}t^{N-1}, \ldots, q^{\lambda_{m+1}}t^{N-m-1}; N, q, t)},
\end{gathered}
\end{equation}
where in the sum above, we have denoted $\lambda\in\GTp_{N+1}$ the positive signature given by
\begin{equation}\label{proof:pieri3}
\begin{gathered}
\lambda_i := \mu_i + k_i \ \forall i = 1, 2, \ldots, m,\\
\lambda_{m+1} := k - \sum_{i=1}^m{k_i}.
\end{gathered}
\end{equation}

From Theorem $\ref{evaluation}$ and the explicit product formula for $\psi_{\lambda/\mu}(q, t)$, we obtain
\begin{equation}\label{eqn:Ccoeff}
\begin{gathered}
C(\lambda, \mu; k; q, t) = t^{\sum_{i=2}^m{(i-1)(\lambda_i - \mu_i)} + m\lambda_{m+1}}\cdot \frac{(1 - q)^m \cdot\Gamma_q(\theta N)}{\Gamma_q(\theta)^m\cdot\Gamma_q(\theta(N-m))}
\frac{(q^{k+\theta N}; q)_{\infty}}{(q^{k+1}; q)_{\infty}}\\
\times \prod_{1\leq i\leq j\leq m}{\frac{(q^{\mu_ i - \lambda_{j+1} + 1 + \theta(j-i)}; q)_{\infty} (q^{\lambda_i - \mu_j + 1 + \theta(j-i)}; q)_{\infty}}{(q^{\mu_i - \lambda_{j+1} + \theta(j - i + 1)}; q)_{\infty}(q^{\lambda_i - \mu_j + \theta(j - i + 1)}; q)_{\infty}}}\\
\times \prod_{1\leq i < j\leq m+1}{\frac{(q^{\lambda_i - \lambda_j + \theta(j - i)}; q)_{\infty}}{(q^{\lambda_i - \lambda_j + 1 + \theta(j-i)}; q)_{\infty}}}
\times \prod_{1\leq i < j\leq m}{\frac{(q^{\mu_i - \mu_j + \theta(j - i + 1)}; q)_{\infty}}{(q^{\mu_i - \mu_j + 1 + \theta(j - i - 1)}; q)_{\infty}}}\\
\times\prod_{i=1}^{m+1}{\frac{(q^{\lambda_i + 1 + \theta(m-i+1)}; q)_{\infty}}{(q^{\lambda_i + \theta(N-i+1)}; q)_{\infty}}}
\times \prod_{i=1}^m{ \frac{(q^{\mu_i + \theta(N - i + 1)}; q)_{\infty}}{(q^{\mu_i + 1 + \theta(m-i)}; q)_{\infty}} }.
\end{gathered}
\end{equation}

Next we want to make a limit transition to the formula $(\ref{proof:pieri2})$. For that, consider positive real numbers $y_1, \ldots, y_m, y$ such that
\begin{equation}\label{conditionsy}
\begin{gathered}
y_1 > y_2 > \ldots > y_m > y > 0,\\
y_i - y_{i+1} > y \ \forall i = 1, 2, \ldots, m-1.
\end{gathered}
\end{equation}

The limit transition we consider is
\begin{equation}\label{eqn:limittransition}
\begin{gathered}
q = \exp(-\epsilon), \ t = q^{\theta} = \exp(-\epsilon\theta),\\
k = \lfloor \epsilon^{-1}y \rfloor, \ \mu_i = \lfloor\epsilon^{-1}y_i \rfloor, \ k_i = \lfloor\epsilon^{-1}z_i \rfloor, \ \forall i = 1, 2, \ldots, m,\\
\lambda_i = \lfloor \epsilon^{-1}y_i \rfloor + \lfloor \epsilon^{-1}z_i \rfloor, \ \forall i = 1, 2, \dots, m, \hspace{.1in} \lambda_{m+1} = \lfloor \epsilon^{-1}y \rfloor - \sum_{i=1}^m{\lfloor \epsilon^{-1}z_i \rfloor},\\
\epsilon \rightarrow 0^+.
\end{gathered}
\end{equation}
We observe that conditions $(\ref{conditionsy})$ clearly imply $\mu_i - \mu_{i+1} \geq k$, $\mu_m \geq k$, and in fact the inequalities are strict if $\epsilon > 0$ is small enough. This implies that formula $(\ref{proof:pieri2})$ holds for the parametrization above and small enough $\epsilon > 0$.
The goal is to obtain the limit of the formula as $\epsilon \rightarrow 0^+$.
In particular, for any $z_1, z_2, \ldots, z_m \geq 0$ such that $z_1 + z_2 + \dots + z_m \leq y$, we show that the general term inside the sum of $(\ref{proof:pieri2})$, with variables as in $(\ref{eqn:limittransition})$, is approximately equal to $\epsilon^m$ times the integrand in $(\ref{eqn:pieri})$, and moreover this estimate is uniform.

Clearly, by the limit $(\ref{macdonaldtojack})$ from Macdonald to Jack polynomials, the left side of $(\ref{proof:pieri2})$ is
\begin{equation*}
\approx J_{\nu}(e^{-y_1}, \ldots, e^{-y_N}; N, \theta)\cdot J_{\nu}(e^{-y}; N, \theta),
\end{equation*}
under the limit transition $(\ref{eqn:limittransition})$, where $A(\epsilon) \approx B(\epsilon)$ means that both $A(\epsilon), B(\epsilon)$ are nonzero for small $\epsilon > 0$ and $\lim_{\epsilon\rightarrow 0}{A(\epsilon)/B(\epsilon)} = 1$. Similarly, for the Macdonald term inside the sum of the right side, we have
\begin{equation*}
\begin{gathered}
P_{\nu}(q^{\lambda_1}t^{N-1}, \ldots, q^{\lambda_m}t^{N-m}, q^{\lambda_{m+1}}t^{N-m-1}; N, q, t)\\
\approx J_{\nu}(e^{-(y_1+z_1)}, \ldots, e^{-(y_m+z_m)}, e^{-(y - z_1-\ldots-z_m)}; N, \theta).
\end{gathered}
\end{equation*}

Estimating $C(\lambda, \mu; k; q, t)$ in the regime $(\ref{eqn:limittransition})$ above is more tedious. We need to apply the following estimates, which are proved in \cite[Ch. 10]{AAR}:
\begin{equation*}
\lim_{q\rightarrow 1^-}{\frac{(xq^a; q)_{\infty}}{(xq^b; q)_{\infty}}} = (1 - x)^{b-a}, \ \lim_{q\rightarrow 1^-}{\Gamma_q(x)} = \Gamma(x).
\end{equation*}
Both estimates above are uniform on compact subsets of some domain for $x$: the left limit is uniform on compact subsets of the unit disk and the right limit is uniform on compact subsets of $\C \setminus\{0, -1, -2, \ldots\}$.
Then we can estimate $(\ref{eqn:Ccoeff})$, line-by-line, as follows:
\begin{equation*}
\begin{gathered}
C(\lambda, \mu; k; q, t) \approx \epsilon^m \cdot \exp\left(-\theta \sum_{i=2}^{m+1}{(i-1)z_i}\right)
\frac{\Gamma(\theta N)}{\Gamma(\theta)^m\Gamma(\theta(N-m))}
\left( 1 - e^{-y} \right)^{1 - \theta N}\\
\times \prod_{1\leq i\leq j\leq m}{\left( 1 - e^{-y_i + y_{j+1} + z_{j+1}} \right)^{\theta - 1}\left( 1 - e^{-y_i - z_i + y_j} \right)^{\theta - 1}}\\
\times \prod_{1\leq i < j\leq m+1}{\left( 1 - e^{-y_i - z_i + y_j + z_j} \right)}
\times \prod_{1\leq i < j\leq m}{\left(1 - e^{-y_i + y_j}\right)^{1 - 2\theta}}\\
\times\prod_{i=1}^{m+1}{\left( 1 - e^{-y_i - z_i} \right)^{\theta(N - m) - 1}}
\times \prod_{i=1}^m{ \left( 1 - e^{-y_i} \right)^{1 + \theta(m - N - 1)} },
\end{gathered}
\end{equation*}
where we set above
\begin{equation*}
z_{m+1} := y - \sum_{i=1}^m{z_i}, \hspace{.15in} y_{m+1} := 0.
\end{equation*}
Notice the factor $\epsilon^m$ in front of the estimate for $C(\lambda, \mu; k; q, t)$. This factor allows us to replace the $m$-dimensional sum into an $m$-dimensional integral as soon as we have uniform estimates, that is, as soon as we show that all estimates $\approx$ above are uniform on the compact subset $\{(z_1, \ldots, z_m)\in\R_{\geq 0}^m : z_1 + z_2 + \dots + z_m \leq y\}$. This is fairly obvious. As a result of the limit transition we obtain
\begin{equation}\label{eqn:almostfinal}
\begin{gathered}
J_{\nu}(e^{-y_1}, \ldots, e^{-y_m}; N, \theta)J_{\nu}(e^{-y}; N, \theta) = \frac{\Gamma(\theta N)e^{-\theta my}}{\Gamma(\theta(N-m))\Gamma(\theta)^m(1 - e^{-y})^{m\theta}\prod_{i=1}^m{(1 - e^{-y_i})^{\theta}}}\\
\times\int\dots\int { G(e^{-y_1}, \ldots, e^{-y_m}, e^{-y}; e^{-z_1}, \ldots, e^{-z_m}; \theta) F(e^{-y_1}, \ldots, e^{-y_m}, e^{-y}; e^{-z_1}, \ldots, e^{-z_m})^{\theta(N-m)-1} }\\
{ \prod_{i=1}^m{(1 - e^{-z_i})^{\theta-1}} \cdot J_{\nu}\left(e^{-y_1 - z_1}, \ldots, e^{-y_m - z_m}, e^{-y+z_1+\dots+z_m}; N, \theta\right) dz_1\dots dz_m },
\end{gathered}
\end{equation}
where the domain of integration $V_y$ is given by the inequalities
\begin{equation}\label{defn:Axbefore}
\begin{gathered}
z_1, \ldots, z_m \geq 0,\\
z_1 + \dots + z_m \leq y,
\end{gathered}
\end{equation}
 the functions $F, G$ are defined as in $(\ref{defn:FGfns})$, but with $x_{m+1} := 1$ and $w_{m+1} := e^{-y + z_1 + \dots + z_m}$ replaced in their definition (as it would be natural). Finally we make the change of variables
\begin{equation*}
\begin{gathered}
x_i = e^{-y_i} \ \forall i = 1, 2, \ldots, m, \hspace{.2in} x = e^{-y}\\
w_i = e^{-z_i} \ \forall i = 1, 2, \ldots, m.
\end{gathered}
\end{equation*}
Then the conditions $(\ref{conditionsy})$ on the variables $y, y_1, \ldots, y_m$ turn into the conditions on $x, x_1, \ldots, x_m$ in the theorem statement.
The domain of integration $V_y$ given by $(\ref{defn:Axbefore})$, for the variables $z_1, \ldots, z_m$, turns into the domain of integration $\U_x$ for the variables $w_1, \ldots, w_m$.
The theorem, for $\nu\in\GTp_N$, is then a consequence of $(\ref{eqn:almostfinal})$.

Finally, the Pieri integral formula for $\nu\in\GT_N$ follows from the Pieri integral formula for $\nu + M := (\nu_1 + M \geq \dots \geq \nu_N + M) \in \GTp_N$ with large enough $M\in\N$ and the index-stability property, $(\ref{indexstabilityjack})$, of Jack polynomials.
\end{proof}

\subsection{Virtual hypergroup of conjugacy classes}\label{sec:olshanski}

This section is largely based on Grigori Olshanski's comments.

Our aim is to describe the relation between the Pieri integral formula for $\theta = 1$ and certain structure constants of the hypergroup of two-sided cosets of $U(N) \backslash GL(N, \C) / U(N)$. We also say a few words about the cases $\theta = \frac{1}{2}, 2$ and argue that the Pieri integral formula, for general $\theta > 0$, amounts to the computation of certain structure constants of a \textit{virtual} hypergroup of two-sided cosets.

Recall that, in a hypergroup $X$, the product of two elements $x, y\in X$ is not a single element of $X$, but rather a probability measure $m_{x, y}$ on $X$. One can define the hypergroup algebra of $X$; then $m_{x, y}$ plays the role of its structure constants. The situation is especially clear when $X$ is a finite hypergroup.

The set of two-sided cosets of $U(N) \backslash GL(N, \C) / U(N)$ can be given the structure of a hypergroup.
In fact, if $x, y$ are two-sided cosets, then the measure $m_{x, y}$ on $U(N) \backslash GL(N, \C) / U(N)$ is constructed as follows. Take any representatives $g\in x$, $h\in y$; then $m_{x, y}$ is the pushforward of the normalized Haar measure under the map
\begin{equation*}
\begin{gathered}
U(N) \rightarrow U(N) \backslash GL(N, \C) / U(N)\\
u \mapsto \textrm{the class of }guh.
\end{gathered}
\end{equation*}
The result does not depend on the choice of $g, h$.

Recall now the functional equation for the Gelfand pair $(GL(N, \C), U(N))$. If $\chi$ is a zonal spherical function of $(GL(N, \C), U(N))$, then
\begin{equation}\label{eqn:functional}
\int_{u\in U(N)}{\chi(guh)du} = \chi(g)\chi(h) \hspace{.2in} \forall g, h\in U(N),
\end{equation}
where $du$ denotes the normalized Haar measure on $U(N)$. From the point of view of hypergroups, the functional equation above shows that the zonal spherical functions of $(GL(N, \C), U(N))$ are precisely the characters of the hypergroup of two-sided cosets.

Let us go further. The set of two-sided cosets $U(N) \backslash GL(N, \C) / U(N)$ can be identified with $\R_+^N / S_N$. the quotient is by the action of the symmetric group on the $N$-dimensional product.
From the previous discussion, the set $\R_+^N / S_N$ acquired a hypergroup structure; we denote the hypergroup by $H(N)$. Given $x, y\in H(N)$, denote the corresponding measure by $m_{x, y}^{(N)}$.

It is known that, as $N$ tends to infinity, the hypergroup $H(N)$ turns into a \textit{semigroup} in the following sense.
For $1\leq k\leq N$, let $H_k(N)$ be the subset of classes represented by some element $(x_1, \ldots, x_N)\in\R_+^N$ for which at most $k$ values $x_i$'s are different from $1$.
We may identify $H_k(N)$ with $H(k)$, for any $N \geq k$. It is readily seen that if $x\in H_k(N)$, $y\in H_l(N)$, and $k + l \leq N$, then $m_{x, y}^{(N)}$ is concentrated on $H_{k+l}^{(N)}$; thus in this situation we can regard $m_{x, y}^{(N)}$ as a measure on $H(k+l)$.
It turns out that, as $N$ tends to infinity and $x, y$ remain fixed, the measure $m_{x, y}^{(N)}$ converges to the delta measure at certain element $z(x, y)$ of $H(k+l)$.
The element $z(x, y)$ is represented by a tuple $(z_1, \ldots, z_N)\in\R_+^N$, whose set of entries that are not $1$ is the union of the corresponding sets for $x$ and $y$.
This fact is a manifestation of the well-known \textit{concentration of measure} phenomenon.

A consequence of the discussion in the previous paragraph is that the irreducible spherical functions of the infinite-dimensional Gelfand pair $(GL(\infty, \C), U(\infty))$ are multiplicative.
The compact Lie group analogue is perhaps more well-known: the characters of the infinite-dimensional unitary group $U(\infty)$ are multiplicative.
Such (asymptotic) multiplicativity can be proved in various ways; for more details, see \cite[Sec. 23]{Ol} and \cite{Ol1}. Section $\ref{sec:proofthm}$ in the special case $\theta = 1$ is, essentially, a new proof of the multiplicativity.

The Pieri integral formula in Theorem $\ref{thm:pieri}$, for $\theta = 1$, is equivalent to the explicit computation of the probability measures $m_{x, y}^{(N)}$, when $l = 1$ and so $y$ is the conjugacy class consisting of matrices with a single real eigenvalue distinct from $1$.
There are some technical conditions on $x, y$ in the statement of our theorem, but those are in place so that the measure $m_{x, y}^{(N)}$ has the simplest possible form; a similar analysis as that given above should suffice to compute the probability measure $m_{x, y}^{(N)}$, for an arbitrary $x\in H_k(N)$ and $y\in H_1(N)$.

For the special values $\theta = \frac{1}{2}, 2$, the Pieri integral formula also admits a group-theoretic interpretation. In fact, for $\theta = \frac{1}{2}, 2$, the corresponding hypergroup of two-sided cosets of $K(N)\backslash G(N)/K(N)$, for $(G(N), K(N))$ equal to $(GL(N, \R), O(N))$ and $(GL(N, \mathbb{H}), U(N, \mathbb{H}))$, respectively.
The concentration of measures $m_{x, y}^{(N)}$, as $N$ tends to infinity, continues to hold in both cases.

From this viewpoint, the Pieri integral formula is equivalent to the computation of certain structure constants for a \textit{virtual hypergroup} with a general Jack parameter $\theta > 0$. Such virtual hypergroup is an actual hypergroup of two-sided cosets as explained above, for the values $\theta = \frac{1}{2}, 1, 2$, but for a general $\theta > 0$, there is no group-theoretic interpretation. It is noteworthy that a whole direction of research in this direction has emerged in the past decades, e.g. \cite{He, HO, Op1, Op2, Op, Ne}. Much like these works, our approach can likely be extended for more general (complex) values of $\theta$, as well as for root systems of general type (other than type A).

\section{Asymptotics of Jack charaters: statement of the theorem}\label{sec:appjack1}

As an application of the formulas obtained in previous sections, we shall prove a refinement of the asymptotics of Jack characters, proved initially by Okounkov-Olshanksi, \cite{OO1}. In this section, we simply state the theorem and outline the approach we follow for its proof. Our plan is realized in the final two sections of this article.

\subsection{Statement of the Theorem}

Let $\{\lambda(N)\}_{N\geq 1}$ be sequences of signatures such that $\lambda(N)\in\GT_N$ for all $N\geq 1$. Let $\lambda^+(N) = (\lambda^+_1(N) \geq \dots \geq \lambda^+_N(N)),\ \lambda^-(N) = (\lambda^-_1(N) \geq \dots \geq \lambda^-_N(N)) \in \GTp_N$ be defined by
\begin{gather*}
\lambda^+_i(N) \myeq \lambda_i(N), \textrm{ if }\lambda_i(N) \geq 0, \hspace{.2in} \lambda^+_i(N) \myeq 0, \textrm{ if }\lambda_i(N) < 0,\\
\lambda^-_i(N) \myeq -\lambda_{N+1-i}(N), \textrm{ if }\lambda_{N+1-i}(N) \leq 0, \hspace{.3in} \lambda^-_i(N) \myeq 0, \textrm{ if }\lambda_{N+1-i}(N) > 0.
\end{gather*}
Let $d(\lambda^{\pm}(N))$ be the lengths of the main diagonals of the partitions corresponding to $\lambda^{\pm}(N)$, and let $(a_1^{\pm}(N), a_2^{\pm}(N), \ldots)$, $(b_1^{\pm}(N), b_2^{\pm}(N), \ldots)$ be their \textit{modified Frobenius coordinates}, defined as
\begin{equation*}
a_i^{\pm}(N) \myeq \lambda^{\pm}(N)_i - i + \frac{1}{2}, \hspace{.2in} b_i^{\pm}(N) \myeq \lambda^{\pm}(N)'_i - i + \frac{1}{2}, \hspace{.3in} \forall \ 1\leq i\leq d(\lambda^{\pm}(N)).
\end{equation*}
We denoted by $\lambda^{\pm}(N)_i'$ the $i$-th part of the conjugate partition of $\lambda^{\pm}(N)$; equivalently this means $\lambda^{\pm}(N)_i' = \left|\{ j : \lambda^{\pm}(N)_j \geq i \}\right|$. The quantity $a_i^{\pm}(N)$ (resp. $b_i^{\pm}(N)$) is roughly the arm-length (resp. leg-length) of the $i$-th square in the main diagonal of the Young diagram of $\lambda^{\pm}(N)$. Moreover, $\sum_{i=1}^{d(\lambda^{\pm}(N))}{(a_i^{\pm}(N) + b_i^{\pm}(N))} = |\lambda^{\pm}(N)|$.

\begin{df}\label{defVK}
The sequence $\{\lambda(N)\}_{N\geq 1}$ is a \textit{Vershik-Kerov sequence}\footnote{VK sequence, for short} if the following limits exist and are finite
\begin{eqnarray*}
\lim_{N\rightarrow\infty}{\frac{a_i^{\pm}(N)}{N}} &=& \alpha_i^{\pm} \hspace{.1in} \forall i = 1, 2, \ldots\\
\lim_{N\rightarrow\infty}{\frac{b_i^{\pm}(N)}{N}} &=& \beta_i^{\pm} \hspace{.1in} \forall i = 1, 2, \ldots\\
\lim_{N\rightarrow\infty}{\frac{|\lambda^{\pm}(N)|}{N}} &=& \delta^{\pm}.
\end{eqnarray*}
Evidently we have $\alpha_1^{\pm} \geq \alpha_2^{\pm} \geq \ldots \geq 0$, $\beta_1^{\pm} \geq \beta_2^{\pm} \geq \ldots \geq 0$. From Fatou's lemma, we also have $\beta_1^+ + \beta_1^- \leq 1$ and $\delta^{\pm} \geq \sum_{i=1}^{\infty}{(\alpha_i^{\pm} + \beta_i^{\pm})}$. It is convenient to set $\gamma^{\pm}  = \delta^{\pm} - \sum_{i=1}^{\infty}{(\alpha_i^{\pm} + \beta_i^{\pm})}$, so $\gamma^{\pm} \geq 0$. We say $\omega = (\alpha^+, \alpha^-, \beta^+, \beta^-, \gamma^+, \gamma^-)\in\R_{\geq 0}^{4\infty + 2}$ is the \textit{boundary point} of $\{\lambda(N)\}_{N\geq 1}$, and write $\omega = (\alpha^{\pm}, \beta^{\pm}, \gamma^{\pm})$.
\end{df}
The main theorem of the second part of the paper is the following.
\begin{thm}\label{thm:application1}
Let $m \in\N$ and let $\{\lambda(N)\}_{N\geq 1}$ be a VK sequence of signatures with boundary point $\omega = (\alpha^{\pm}, \beta^{\pm}, \gamma^{\pm})$. There exists $\delta > 0$ such that the limit
\begin{equation}\label{thm:variate}
\lim_{N\rightarrow\infty}{J_{\lambda(N)}(z_1, \ldots, z_m; N, \theta)} = \prod_{i = 1}^m{\Psi(z_i; \omega, \theta)},
\end{equation}
holds uniformly on $\TT_{\delta}^m = \underbrace{\TT_{\delta} \times \cdots \times \TT_{\delta}}_{\textrm{$m$ times}}$, where $\TT_{\delta}\myeq\{z\in\C : 1 - \delta < |z| < 1 + \delta\}$ and
\begin{equation}\label{Psidef}
\Psi(z; \omega, \theta) \myeq \exp(\gamma^+(z - 1) + \gamma^-(z^{-1} - 1))\prod_{i=1}^{\infty}{\frac{(1 + \beta_i^+(z-1))}{(1 - \alpha_i^+(z-1)/\theta)^{\theta}}\frac{(1 + \beta_i^-(z^{-1}-1))}{(1 - \alpha_i^-(z^{-1}-1)/\theta)^{\theta}}}.
\end{equation}
In the denominators, the terms $x^{\theta} = \exp(\theta \ln{x})$ have their principal values. The function $\Psi(z; \omega, \theta)$ is holomorphic on a neighborhood of the $m$-dimensional torus $\TT^m$, and we are assuming that $\delta > 0$ is small enough so that $\Psi(z; \omega, \theta)$ is holomorphic on $\TT_{\delta}^m$.
\end{thm}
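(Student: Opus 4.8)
The plan is to prove Theorem \ref{thm:application1} by induction on the number of variables $m$, using the one-variable integral representations (Theorems \ref{jackthm2}, \ref{jackthm25} and Remark \ref{jackrewritten}) as the base case and the Pieri integral formula (Theorem \ref{thm:pieri}) as the inductive step. For the base case $m = 1$, I would take a sequence $\{\lambda(N)\}$ with boundary point $\omega$ and, depending on whether $|z| < 1$ or $|z| > 1$, apply the contour-integral formula of Theorem \ref{jackthm2} or \ref{jackthm25} to $J_{\lambda(N)}(z; N, \theta)$. The key step is to track the asymptotics of the integrand: the ratio $\prod_{i=1}^N \Gamma(\lambda_i(N) + \theta(N-i) - z)/\Gamma(\lambda_i(N) + \theta(N-i+1) - z)$ should, after the substitution $z = Nu$ and rescaling the contour, be analyzed through $\log$ of the Gamma ratios. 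Using $\log \Gamma(x + a) - \log\Gamma(x + b) \approx (a-b)\log x$ for large $x$, together with the separation of $\{\lambda_i(N)\}$ into positive rows $\lambda^+(N)$ and negative rows $-\lambda^-_{N+1-i}(N)$, and then re-expressing everything through the modified Frobenius coordinates $a_i^{\pm}(N), b_i^{\pm}(N)$ and the size $|\lambda^{\pm}(N)|$, the exponential of the limiting integrand is exactly the generating-function expression appearing in $\Psi(z;\omega,\theta)$. A saddle-point / residue analysis of the rescaled contour integral then yields $\lim_{N\to\infty} J_{\lambda(N)}(z;N,\theta) = \Psi(z;\omega,\theta)$, uniformly for $z$ in an annulus $\TT_\delta \setminus \TT$; the case $|z| = 1$ follows by a Vitali/maximum-principle argument from holomorphy and local boundedness on $\TT_\delta$.

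For the inductive step, assume the result holds for $m - 1$ variables. Writing $z_m = x$ and $z_1, \dots, z_{m-1}$ for the remaining variables (and after reducing, via continuity and the symmetry of the left side, to real arguments $1 > x > x_{m-1} > \cdots > x_1 > 0$ satisfying the genericity conditions of Theorem \ref{thm:pieri}), I would apply the Pieri integral formula to rewrite $J_{\lambda(N)}(x_1,\dots,x_{m-1};N,\theta)\, J_{\lambda(N)}(x;N,\theta)$ as an integral over $\U_x \subset \R^{m-1}$ against the density built from $G$, $F^{\theta(N-1)-1}$ and $\prod(1-w_i)^{\theta-1}$, times the Jack character $J_{\lambda(N)}(x_1 w_1, \dots, x_{m-1}w_{m-1}, x/(w_1\cdots w_{m-1}); N, \theta)$ of $m$ variables. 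The crucial point is that the family of probability measures on $\U_x$ determined by this density concentrates, as $N \to \infty$, at the single point $w_1 = \cdots = w_{m-1} = 1$: the factor $F^{\theta(N-1)-1}$ dominates, and since $F(x_1,\dots,x_{m-1},x;w_1,\dots,w_{m-1}) = (1-x)^{-1}\prod(1-x_j)^{-1}\prod_{i=1}^m(1-x_iw_i)$ is maximized precisely at $w = (1,\dots,1)$ on $\U_x$ (where each $1 - x_i w_i$ is largest), a Laplace-method estimate shows the mass escapes from any neighborhood of $w = \mathbf{1}$ at an exponential rate in $N$. Combined with the fact that the prefactor $\Gamma(N\theta)/(\Gamma((N-1)\theta)\Gamma(\theta)) \sim (N\theta)^{\theta - 1}/\Gamma(\theta)$ exactly compensates the normalization (this is forced, since integrating the constant $1$ must recover the product of one-variable characters up to this normalization), we get $J_{\lambda(N)}(x_1,\dots,x_{m-1};N,\theta)\,J_{\lambda(N)}(x;N,\theta) \approx J_{\lambda(N)}(x_1,\dots,x_m;N,\theta)$ where on the right $w = \mathbf{1}$. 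Using the inductive hypothesis on the left (together with the $m=1$ case for the factor $J_{\lambda(N)}(x;N,\theta)$) yields $\lim_N J_{\lambda(N)}(x_1,\dots,x_m;N,\theta) = \prod_{i=1}^m \Psi(x_i;\omega,\theta)$ for generic real tuples, and then a final Vitali argument — using the already-established local uniform boundedness of the $m$-variable characters on $\TT_\delta^m$ (itself obtained from the one-variable bounds plus the Pieri formula) and holomorphy in each variable — upgrades this to uniform convergence on all of $\TT_\delta^m$.

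The main obstacle, I expect, is the concentration-of-measure step in the inductive argument: one must show not merely that the density peaks at $w = \mathbf{1}$ but that the total mass outside a shrinking neighborhood is $o(1)$ relative to the contribution near $\mathbf{1}$, uniformly in $N$ and locally uniformly in the parameters $(x_1, \dots, x_{m-1}, x)$, and one must simultaneously verify that the remaining slowly-varying factors $G$ and $\prod(1-w_i)^{\theta-1}$ — the latter vanishing at the concentration point when $\theta < 1$ — do not spoil the estimate after the $\Gamma$-prefactor renormalization is accounted for. Handling the $\theta < 1$ case carefully (where $(1-w_i)^{\theta-1}$ is singular at $w_i = 1$ and the integral near the peak must be computed as an honest Laplace integral with an integrable singularity) is the delicate part, as is controlling the boundary of $\U_x$ where $w_1 \cdots w_{m-1} = x$. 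A secondary technical point is the passage from the generic real-argument regime required by Theorem \ref{thm:pieri} to a full complex neighborhood of $\TT^m$, which the Vitali/normal-families argument should handle provided uniform local bounds are in place.
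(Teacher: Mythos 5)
Your proposal follows essentially the same route as the paper: induction on $m$, with the base case $m=1$ handled by the contour-integral representations plus a saddle-point analysis (and a Cauchy/normal-families argument to cover the unit circle), and the inductive step handled by the Pieri integral formula, concentration of the density at $w=(1,\dots,1)$ with the $\Gamma(\theta)^m$ Laplace computation absorbing the $(1-w_i)^{\theta-1}$ singularity, uniform boundedness on $\TT_\delta^m$, and a Montel/analytic-continuation step to upgrade pointwise real convergence to uniform convergence on $\TT_\delta^m$. One small correction: $F$ is maximized at $w=\mathbf{1}$ not because each factor $1-x_iw_i$ is largest there (for $i\le m$ those factors are in fact smallest at $w_i=1$), but because $F$ is increasing in each $w_i$ — the last factor $1-x\,(w_1\cdots w_m)^{-1}$ dominates — which is exactly the monotonicity computation the paper carries out, so the argument goes through unchanged.
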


\begin{rem}
Via the contour integral representations in Section $\ref{sec:jackintegral}$, we could actually prove that the limit $(\ref{thm:variate})$ for $m = 1$ holds uniformly on $\{ z\in\C : 1 - \delta_1 < |z| < 1 + \delta_2 \}$, for any
\[
0 < \delta_1 < \theta/(\alpha_1^- + \theta), \ 0 < \delta_2 < \theta/\alpha_1^+.
\]
This is best possible because $\Psi(z; \omega, \theta)$ has singularities at $z = 1 - \theta/(\alpha_1^- + \theta)$ and $z = 1 + \theta/\alpha_1^+$.
We believe that the limit $(\ref{thm:variate})$ holds uniformly on $\{ z\in\C : 1 - \delta_1 < |z| < 1 + \delta_2 \}^m$, for any $\delta_1, \delta_2$ as above.
However, our analysis in Section $\ref{sec:proofthm}$ does not allow us to prove this statement.
We can at least obtain explicit values of $\delta_m$ for which we can prove the limit $(\ref{thm:variate})$ for a given $m$, by looking at the inequalities at the beginning of Subsection $\ref{sec:convergencepieri}$.
\end{rem}

\begin{rem}
The product above is absolutely convergent, for $z$ in a neighborhood of the unit circle $\TT$, because $\sum_i{(\alpha_i^+ + \alpha_i^- + \beta_i^+ + \beta_i^-)} < \infty$.
\end{rem}

\begin{rem}
Theorem $\ref{thm:application1}$ above is slightly stronger than the ``if'' statement of the main theorem in \cite{OO1}. In fact, the uniform convergence on $\TT_{\delta}^m$, for any $\delta > 0$, implies uniform convergence on the torus $\TT^m$, which is the statement proved in \cite{OO1}.
\end{rem}

\subsection{Outline of the proof of Theorem $\ref{thm:application1}$}\label{outlineapp1}

The proof of Theorem $\ref{thm:application1}$ is by induction on the number of variables $m$.
For the base case $m = 1$, we make use of the integral representations for Jack characters of one variable. In fact, Theorem $\ref{thm:application1}$ is established in Section $\ref{sec:proofthm}$ for $m = 1$ (and any $\theta > 0$) by means of Theorems $\ref{jackthm2}$, $\ref{jackthm25}$, and the saddle-point method, \cite{C}. 
For the inductive step, we shall need the Pieri integral formula for Jack polynomials, Theorem $\ref{thm:pieri}$, as well as some easy estimates; this is worked out in Section $\ref{sec:proofthm}$.

In the remaining of this section, we only outline the steps of the proof of Theorem $\ref{thm:application1}$ for $m = 1$, and carry out the rest of the proof in Section $\ref{sec:casem1}$.

\begin{lem}\label{claimconvergence}
Assume there exists $\frac{1}{2} > \epsilon > 0$ such that the convergence
\begin{equation}\label{toproveapp1}
\lim_{N\rightarrow\infty}{J_{\lambda(N)}(z; N, \theta)} = \Psi(z; \omega, \theta)
\end{equation}
holds uniformly on compact subsets of $\{z\in\C : 1 < |z| < 1+ \epsilon \}\setminus (-\infty, 0]$ and $\{z\in\C : 1 - \epsilon < |z| < 1\}\setminus [0, \infty)$. Then $(\ref{toproveapp1})$ holds uniformly on $\TT_{\epsilon/2} := \{z\in\C : 1 - \frac{\epsilon}{2} < |z| < 1 + \frac{\epsilon}{2}\}$.
\end{lem}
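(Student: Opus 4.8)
The plan is to upgrade convergence from the two slit annular regions to the full annulus $\TT_{\epsilon/2}$ by invoking normal-family/Vitali-type arguments together with the fact that the limit candidate $\Psi(z;\omega,\theta)$ is holomorphic on a neighborhood of $\TT$. First I would observe that for each fixed $N$, the function $z\mapsto J_{\lambda(N)}(z;N,\theta)$ is a symmetric Laurent polynomial in one variable, hence holomorphic on $\C\setminus\{0\}$; in particular it is holomorphic on the honest annulus $\{1-\epsilon<|z|<1+\epsilon\}$ with no slit removed. The hypothesis only gives pointwise (indeed locally uniform) convergence on the two slit pieces, but their union $\{1-\epsilon<|z|<1+\epsilon\}\setminus\big((-\infty,0]\cup[0,\infty)\big)$ is a nonempty open subset of the annulus (it consists of the parts with $\Im z\neq 0$ in the appropriate radial ranges, glued along the two arcs of moduli strictly between $1-\epsilon$ and $1$, resp. $1$ and $1+\epsilon$).

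Next I would establish a uniform bound: $\sup_N\sup_{z\in K}|J_{\lambda(N)}(z;N,\theta)|<\infty$ for every compact $K\subset\TT_{\epsilon/2}$. This is the key input that lets one pass from convergence on a subregion to convergence on the whole annulus. One way to get it is to cover $K$ by finitely many closed disks and use the maximum principle to reduce to bounding $|J_{\lambda(N)}|$ on circles $|z|=1\pm\epsilon/2$ — but these circles are not contained in the slit regions. Instead I would argue as follows: the hypothesis gives local uniform convergence on the slit annuli, so in particular $\{J_{\lambda(N)}(z;N,\theta)\}_N$ is uniformly bounded on every compact subset of the slit annuli; choosing such a compact set to be, say, the two circular arcs $\{|z|=1+\epsilon/2,\ \Im z\geq \eta\}$ and $\{|z|=1+\epsilon/2,\ \Im z\leq -\eta\}$ for small $\eta>0$ together with analogous arcs at radius $1-\epsilon/2$, and then invoking that the missing small neighborhoods of the real axis can be controlled by the maximum principle applied on thin lens-shaped domains whose boundary lies in the controlled region — this yields a uniform bound on all of $\{|z|=1\pm\epsilon/2\}$, hence on $\TT_{\epsilon/2}$ by the maximum modulus principle applied on the closed annulus $1-\epsilon/2\le |z|\le 1+\epsilon/2$.

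With a uniform bound in hand, $\{J_{\lambda(N)}(z;N,\theta)\}_N$ is a normal family on $\TT_{\epsilon/2}$ (Montel). By hypothesis it converges to $\Psi(z;\omega,\theta)$ on the nonempty open subset given by the slit annuli, which has an accumulation point inside $\TT_{\epsilon/2}$; by Vitali's theorem the whole sequence converges locally uniformly on $\TT_{\epsilon/2}$ to the unique holomorphic function agreeing with $\Psi$ there, namely $\Psi$ itself (which is holomorphic on $\TT_{\epsilon/2}$ once $\epsilon$ is small, by the remarks following Theorem \ref{thm:application1}). Finally, locally uniform convergence on the open annulus $\TT_{\epsilon/2}$ together with the uniform boundedness near the two boundary circles (again via the maximum principle on the closed annulus) promotes this to uniform convergence on all of $\TT_{\epsilon/2}$; alternatively one notes $\TT_{\epsilon/4}$ is relatively compact in $\TT_{\epsilon/2}$ and re-indexes $\epsilon$, which is harmless.

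The main obstacle I anticipate is the uniform bound near the real axis: the hypothesis deliberately excises $(-\infty,0]$ and $[0,\infty)$, yet $J_{\lambda(N)}$ is perfectly regular there, so one must transfer control from the slit region across the real slits. The clean fix is the maximum principle argument sketched above (bound on a contour surrounding the problematic segment by a contour lying in the good region), using that for each fixed $N$ the function is holomorphic on the unslit annulus; making this precise — choosing the auxiliary contours and verifying they lie in the region where the hypothesis supplies bounds — is the one genuinely non-routine point, and everything else (Montel, Vitali, maximum modulus) is standard complex analysis.
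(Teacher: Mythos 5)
Your endgame (Montel, Vitali, identity theorem, then uniformity on the slightly smaller annulus) is fine, but the step you yourself flag as ``the one genuinely non-routine point'' --- transferring uniform bounds across the excised rays and across the unit circle by the maximum principle --- is not just non-routine, it cannot work. The region where the hypothesis gives any control is $G=\bigl(\{1<|z|<1+\epsilon\}\setminus(-\infty,0]\bigr)\cup\bigl(\{1-\epsilon<|z|<1\}\setminus[0,\infty)\bigr)$: two \emph{disjoint} slit annuli, each of which is simply connected, separated by the unit circle, which belongs to neither. Consequently any Jordan curve (boundary of a lens, or of any domain) lying in $G$ has its interior contained in $G$, so the maximum modulus principle applied with boundary in $G$ can only re-derive bounds at points that are already controlled. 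Concretely, a contour enclosing the point $-(1+\epsilon/2)$ of your outer reference circle must cross $(-\infty,0]$ at a point of modulus greater than $1+\epsilon/2$, a contour enclosing $1-\epsilon/2$ must cross $[0,\infty)$ at a point of modulus less than $1-\epsilon/2$, and a contour enclosing any point of the unit circle must cross the unit circle itself; in all three cases the crossing point lies outside $G$, where the hypothesis supplies no bound. So the uniform bound $\sup_N\sup_{K}|J_{\lambda(N)}|<\infty$ on compacts of $\TT_{\epsilon/2}$ --- the one input your normal-family argument needs --- is not obtained, and no purely complex-analytic propagation from $G$ can obtain it, since locally uniform convergence on an open set carries no quantitative information up to its boundary.

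The missing ingredient is a positivity property special to Jack characters, and it is exactly what the paper uses. By the branching rule (Theorem \ref{branchingjacks}) the one-variable Jack character $J_{\lambda(N)}(z;N,\theta)$ is a Laurent polynomial in $z$ with nonnegative coefficients (after dividing by $J_{\lambda(N)}(1^N;\theta)>0$), so $|J_{\lambda(N)}(z;N,\theta)|\le J_{\lambda(N)}(|z|;N,\theta)$, and the value at a real point near $1$ is in turn dominated by values at real points of the form $1+2\epsilon/3$ and $(1-2\epsilon/3)^{-1}$, which lie on the positive real axis inside $\{1<|z|<1+\epsilon\}\setminus(-\infty,0]$, i.e.\ in the controlled region, where the hypothesized convergence makes them bounded in $N$. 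This yields uniform boundedness of $\{J_{\lambda(N)}(\cdot;N,\theta)\}_N$ on $\overline{\TT_{2\epsilon/3}}$; the paper then concludes not via Montel--Vitali but by writing $J_{\lambda(N)}(x;N,\theta)$ as a Cauchy integral over the two circles $|z|=1\pm 2\epsilon/3$ and passing to the limit by dominated convergence (the integrand converges everywhere on those circles except at the two points where they meet the removed rays), which gives uniform convergence on $\TT_{\epsilon/2}$ directly. Your Montel--Vitali ending would serve equally well once the bound is in place; without the positivity input, however, the proposal has a genuine gap at its central step.
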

\begin{proof}
From the branching rule for Jack polynomials, Theorem $\ref{branchingjacks}$, it is clear that all branching coefficients $\psi_{\mu/\nu}(\theta)$ are nonnegative for $\theta > 0$; therefore for $z\in\overline{\TT_{2\epsilon/3}} := \{z\in\C : 1 - \frac{2\epsilon}{3} \leq z \leq 1 + \frac{2\epsilon}{3}\}$, we have
\begin{equation*}
|J_{\lambda(N)}(z; N, \theta)| \leq J_{\lambda(N)}(|z|; N, \theta) < J_{\lambda(N)}\left(\left(1 - \frac{2\epsilon}{3}\right)^{-1}; N, \theta\right) + J_{\lambda(N)}\left(1 + \frac{2\epsilon}{3}; N, \theta\right).
\end{equation*}
Note that $\frac{1}{2} > \epsilon > 0$ implies $1 < (1 - 2\epsilon/3)^{-1} < 1 + \epsilon$, whereas $1 < 1 + 2\epsilon/3 < 1 + \epsilon$ is obvious. Then, by assumption, we have
\begin{equation*}
\begin{gathered}
\lim_{N\rightarrow\infty}{J_{\lambda(N)}\left( (1 - 2\epsilon/3)^{-1}; N, \theta \right)} = \Psi\left((1 - 2\epsilon/3)^{-1}; \omega, \theta\right),\\
\lim_{N\rightarrow\infty}{J_{\lambda(N)}\left(1 + 2\epsilon/3; N, \theta \right)} = \Psi\left(1 + 2\epsilon/3; \omega, \theta\right).
\end{gathered}
\end{equation*}
In particular, the sequences $\left\{\left|J_{\lambda(N)}\left(1 + \frac{2\epsilon}{3}; N, \theta\right)\right| : N \geq 1 \right\}$, $\left\{\left|J_{\lambda(N)}\left( (1 - \frac{2\epsilon}{3})^{-1}; N, \theta\right)\right| : N\geq 1 \right\}$ are bounded, and therefore the sequence $\{J_{\lambda(N)}(z; N, \theta)\}_{N\geq 1}$ of holomorphic functions are uniformly bounded on the closure of $\TT_{2\epsilon/3}$.

Next, for any $x\in\TT_{\epsilon/2}$, an application of Cauchy's integral formula yields the equality
\begin{equation}\label{cauchyintegralclaim}
J_{\lambda(N)}(x; N, \theta) = \frac{1}{2\pi\ii}\oint_{C^+_{1+2\epsilon/3} \cup C^-_{1-2\epsilon/3}}{\frac{J_{\lambda(N)}(z; N, \theta)}{z - x}dz},
\end{equation}
where $C_{1+2\epsilon/3}^+$ is the positively oriented contour centered at $0$ with radius $1 + \frac{2\epsilon}{3}$ and $C^-_{1 - 2\epsilon/3}$ is the negatively oriented contour centered at $0$ with radius $1 - \frac{2\epsilon}{3}$. Due to $\lim_{N\rightarrow\infty}{J_{\lambda(N)}(z; N, \theta)} = \Psi(z; \omega, \theta)$ on $\left(C^+_{1 + 2\epsilon/3} \cup C^-_{1 - 2\epsilon/3}\right) \setminus \{- 1 - \frac{2\epsilon}{3}, 1 - \frac{2\epsilon}{3}\}$ (an implication of the assumption in the theorem), combined with the uniform boundedness of $\{J_{\lambda(N)}(z; N, \theta)\}_{N\geq 1}$ on the closure of $\TT_{2\epsilon/3}$, we obtain the limit
\begin{equation}\label{cauchyintegralclaim2}
\lim_{N\rightarrow\infty}{\frac{1}{2\pi\ii}\oint_{C^+_{1+2\epsilon/3} \cup C^-_{1-2\epsilon/3}}{\frac{J_{\lambda(N)}(z; N, \theta)}{z - x}dz}} = \frac{1}{2\pi\ii}\oint_{C^+_{1+2\epsilon/3} \cup C^-_{1-2\epsilon/3}}{\frac{\Psi(z; \omega, \theta)}{z - x}dz} = \Psi(x; \omega, \theta),
\end{equation}
uniformly on $x\in\TT_{\epsilon/2}$. The claim follows from $(\ref{cauchyintegralclaim})$ and $(\ref{cauchyintegralclaim2})$.
\end{proof}

By virtue of Lemma $\ref{claimconvergence}$, we need only to prove the desired limit of Theorem $\ref{thm:application1}$ on compact subsets of $\{z\in\C : 1 < |z| < 1+ \epsilon \} \setminus (-\infty, 0]$ and $\{z\in\C : 1 - \epsilon < |z| < 1\} \setminus [0, \infty)$, for some small $1/2 > \epsilon > 0$ (since then, Theorem $\ref{thm:application1}$ follows if we set $\delta = \epsilon/2$). We use Theorems $\ref{jackthm2eqn}$ and $\ref{jackthm25eqn}$, respectively. In Section $\ref{sec:casem1}$ we only work out the uniform convergence on compact subsets of $\{z\in\C : 1 < |z| < 1+ \epsilon \}\setminus (-\infty, 0]$, for some small $\epsilon > 0$, since the uniform convergence on compact subsets of $\{z\in\C : 1 - \epsilon < |z| < 1\} \setminus [0, \infty)$ can be proved similarly.

\section{One variable case: proof of Theorem $\ref{thm:application1}$ for $m = 1$}\label{sec:casem1}

In this section, we prove the assumption of Lemma $\ref{claimconvergence}$. As we mentioned above we concentrate in proving the convergence $(\ref{toproveapp1})$ on the domain $\{z\in\C : 1 < |z| < 1 + \epsilon\}\setminus (-\infty, 0]$, for some small $\epsilon > 0$, by means of the integral representation in Theorem $\ref{jackthm25}$. The convergence in the domain $\{z\in\C : 1 - \epsilon < |z| < 1\}\setminus [0, \infty)$ can be proved similarly, by making use of the integral representation in Theorem $\ref{jackthm2}$ instead.

After the change of variables $x = e^y$, the statement we wish to show becomes

\begin{prop}\label{toproveclaim2}
Assume that $\{\lambda(N)\}$ is a VK sequence with boundary point $\omega = (\alpha^{\pm}, \beta^{\pm}, \gamma^{\pm})$. Then
\begin{equation*}
\lim_{N\rightarrow\infty}{J_{\lambda(N)}(e^y; N, \theta)} = \Psi(e^y; \omega, \theta),
\end{equation*}
where $\Psi$ is defined in $(\ref{Psidef})$, and the limit holds uniformly for $y$ on compact subsets of the region
\begin{equation*}
V_{\delta} \myeq \{v\in\C : 0 < \Re v < \delta, \ -\pi < \Im v < \pi\},
\end{equation*}
for some small $\delta > 0$.
\end{prop}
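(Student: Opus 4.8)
The plan is to run the classical saddle-point method on the contour integral of Theorem~\ref{jackthm25}, in the form of Remark~\ref{jackrewritten}. Since $x=e^y$ with $0<\Re y<\delta$ we have $|x|>1$, so
\[
J_{\lambda(N)}(e^y;N,\theta)=\frac{\Gamma(\theta N)}{(e^y-1)^{\theta N-1}}\cdot\frac{1}{2\pi\ii}\int_{\CC^-}e^{yz}\prod_{i=1}^N\frac{\Gamma(z+1-\ell_i-\theta)}{\Gamma(z+1-\ell_i)}\,dz,\qquad \ell_i:=\lambda_i(N)+\theta(N-i).
\]
First I would peel off the empty-signature part: with $\ell_i^{\circ}:=\theta(N-i)$ the product telescopes when $\lambda=\emptyset$, giving $\prod_i\Gamma(z+1-\ell_i^{\circ}-\theta)/\Gamma(z+1-\ell_i^{\circ})=\Gamma(z+1-\theta N)/\Gamma(z+1)$, so that
\[
J_{\lambda(N)}(e^y;N,\theta)=\frac{\Gamma(\theta N)}{(e^y-1)^{\theta N-1}}\cdot\frac{1}{2\pi\ii}\int_{\CC^-}e^{yz}\,\frac{\Gamma(z+1-\theta N)}{\Gamma(z+1)}\,\Pi_N(z)\,dz,\qquad
\Pi_N(z):=\prod_{i:\,\lambda_i(N)\ne 0}\frac{\Gamma(z+1-\ell_i-\theta)\,\Gamma(z+1-\ell_i^{\circ})}{\Gamma(z+1-\ell_i)\,\Gamma(z+1-\ell_i^{\circ}-\theta)}.
\]
The argument rests on two facts. (i) The ``empty'' integral is \emph{exactly} $1$: it equals $J_{\emptyset}(e^y;N,\theta)$, as one checks by a finite residue computation (a binomial sum) or by the $q\to1$ limit of the Macdonald identity of Theorem~\ref{macdonaldthm25}; after $z=Nt$ it is a textbook Laplace integral whose unique real saddle is $t_0=t_0(y)=\theta e^{y}/(e^{y}-1)$, the prefactor precisely cancelling the exponential and the Gaussian width. (ii) The correction $\Pi_N$ is \emph{subexponential}: Stirling gives $\Pi_N(Nt)=\prod_{i:\,\lambda_i(N)\ne0}(1-\frac{\lambda_i(N)/N}{t-\theta(1-i/N)})^{-\theta}(1+O(1/N))$, uniformly for $t$ near $t_0$, and this converges to an explicit function $\Pi(t)$.

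To identify $\Pi(t)$, I would use that the modified-Frobenius identity $|\lambda^{\pm}(N)|=\sum_j(a_j^{\pm}(N)+b_j^{\pm}(N))$ and the inequality $\lambda^{\pm}_j(N)\ge j$ on the diagonal force $d(\lambda^{\pm}(N))=O(\sqrt N)$, so the only rows with $\lambda_i(N)/N\not\to0$ lie among the first $O(\sqrt N)$ positions (the ``arms'', $\lambda_i(N)/N\to\alpha_i^+$), and symmetrically near $i=N$; all remaining nonzero parts are $o(N)$ individually, so their contribution to $\log\Pi_N$ may be linearised and re-summed via the conjugate partitions. Using $\sum_i(\alpha_i^{\pm}+\beta_i^{\pm})\le\delta^{\pm}<\infty$ to dominate the tails, this yields
\[
\Pi(t)=\prod_{i\ge1}\Bigl(1-\tfrac{\alpha_i^+}{t-\theta}\Bigr)^{-\theta}\Bigl(1+\tfrac{\theta\beta_i^+}{t-\theta}\Bigr)\Bigl(1+\tfrac{\alpha_i^-}{t}\Bigr)^{-\theta}\Bigl(1-\tfrac{\theta\beta_i^-}{t}\Bigr)\exp\!\Bigl(\tfrac{\theta\gamma^+}{t-\theta}-\tfrac{\theta\gamma^-}{t}\Bigr),
\]
and substituting $t=t_0$ (so $t_0-\theta=\theta/(e^{y}-1)$ and $t_0=\theta e^{y}/(e^{y}-1)$, whence $\tfrac{\alpha_i^+}{t_0-\theta}=\tfrac{\alpha_i^+(e^{y}-1)}{\theta}$, $\tfrac{\alpha_i^-}{t_0}=-\tfrac{\alpha_i^-(e^{-y}-1)}{\theta}$, and so on) identifies $\Pi(t_0)$ with $\Psi(e^y;\omega,\theta)$ of $(\ref{Psidef})$.

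With these in hand I would make the estimate rigorous. After $z=Nt$, deform $\CC^-$ to the steepest-descent contour through $t_0$; this is legitimate as long as $t_0$ stays strictly to the right of the rightmost pole of the integrand, located near $z/N\approx\alpha_1^++\theta$, i.e.\ as long as $e^{\Re y}<1+\theta/\alpha_1^+$ — this is exactly the restriction forcing $\delta$ to be small, and it reflects the singularity of $\Psi(z;\omega,\theta)$ at $z=1+\theta/\alpha_1^+$ noted after Theorem~\ref{thm:application1}. On that contour $\Re\bigl(yt-\theta\int\log(t-s)\,d\mathbf{m}_{\circ}(s)\bigr)$, with $\mathbf{m}_{\circ}=\theta^{-1}\mathbf{1}_{[0,\theta]}$ the limit of the empirical measures $\tfrac1N\sum_i\delta_{\ell_i^{\circ}/N}$, has a nondegenerate maximum at $t_0$, while the horizontal rays of $\CC^-$ running to $\Re z=-\infty$ contribute negligibly by the decay of $|e^{yz}|$ and a uniform bound on the $\Gamma$-ratio of the type $(\ref{needbound1})$ from the proof of Theorem~\ref{jackthm2}. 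Since the ``empty'' integral equals $1$ exactly and $\Pi_N$ is equicontinuous and converges uniformly to $\Pi$ near $t_0$, a standard localisation of the integral near the saddle gives $J_{\lambda(N)}(e^y;N,\theta)\to\Pi(t_0)=\Psi(e^y;\omega,\theta)$, uniformly for $y$ on compact subsets of $V_{\delta}$. The main obstacle is the second step: proving that $\Pi_N(Nt)$ really converges to the stated $\Pi(t)$ uniformly in $y$, a delicate Frobenius bookkeeping — isolating arms, legs and the ``$\gamma$''-bulk of small hooks, and checking that the $O(1/N)$ errors accumulated over $\sim N$ factors sum to $o(1)$ (this uses $\sum_i|\lambda_i(N)|/N=O(1)$, a consequence of $\delta^{\pm}<\infty$) — and it is here that the full product structure of $\Psi$ is recovered; a secondary issue is making the deformation uniform and extracting an explicit admissible $\delta$, which one reads off from where $\Psi(z;\omega,\theta)$ first becomes singular, as in the remark following Theorem~\ref{thm:application1}.
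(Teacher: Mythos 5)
Your proposal is correct in outline and follows essentially the same route as the paper's proof: the same contour-integral representation (Theorem \ref{jackthm25} in the form of Remark \ref{jackrewritten}), the same saddle point $t_0=\theta e^{y}/(e^{y}-1)$, the same limiting product for the signature-dependent Gamma-ratio (your $\Pi$ coincides with the paper's $\widetilde{\Psi}$ of Lemma \ref{limitphi}, and $\Pi(t_0)=\Psi(e^{y};\omega,\theta)$), and the same steepest-descent localization with tail estimates. The only differences are organizational — you normalize by the exactly-known empty-signature integral instead of evaluating the prefactor and Gaussian factor via Stirling, and you keep row-based rather than hook-based bookkeeping — and the step you yourself flag as the main obstacle (uniform convergence of $\Pi_N$ to $\Pi$) is precisely where the bulk of the paper's work lies, handled there via the hook decomposition, the bound $d(\lambda^{\pm}(N))=O(\sqrt{N})$, and the Fatou-lemma power-sum limits $(\ref{claimfatou})$.
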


In the rest of this section, we prove Proposition $\ref{toproveclaim2}$. We can use the integral representation of Theorem $\ref{jackthm25}$, in the form of Remark $\ref{jackrewritten}$:
\begin{equation}\label{rewrittenformula1}
J_{\lambda(N)}(e^y; N, \theta) = \frac{\Gamma(\theta N)N^{1-\theta N}}{(e^y - 1)^{\theta N - 1}2\pi\ii}\int_{\CC^-}{\exp(Nw(z))\psi(z; \lambda(N), \theta) dz}, \hspace{.1in}\Re y > 0,
\end{equation}
where
\begin{equation}\label{defsasymptotics}
\begin{gathered}
w(z) \myeq yz - H(z; \theta), \hspace{.2in} H(z; \theta) \myeq z\ln{z} - (z - \theta)\ln{(z - \theta)} - \theta,\\
\psi(z; \lambda(N), \theta) \myeq \exp(NH(z; \theta))N^{\theta N}\prod_{i=1}^N{\frac{\Gamma(Nz + 1 - (\lambda_i(N) + \theta(N - i + 1)))}{\Gamma(Nz + 1 - (\lambda_i(N) + \theta(N-i)))}},
\end{gathered}
\end{equation}
and the logarithms appearing in the definition of $H(z; \theta)$ are defined on the complex plane cut along $(-\infty, 0]$.
The contour $\CC^-$ in this case can be the positively oriented contour consisting of the segment $[M - r\ii, M + r\ii]$ and the rays $[M - r\ii, -\infty - r\ii)$, $[M + r\ii, -\infty + r\ii)$, for any $r > 0$ and any $M > \max\{\theta,\ \theta + \frac{\lambda_1(N) - 1}{N}\}$.
Note that the contour $\CC^-$ is in the domain of definition for the functions $\ln{z}$, $\ln{(z - \theta)}$, and moreover $\CC^-$ encloses all singularities of the integrand. Since $\{\lambda(N)\}_{N \geq 1}$ is a VK sequence, then $\lim_{N\rightarrow\infty}{\lambda_1(N)/N}$ exists and so $\{\lambda_1(N)/N\}_{N \geq 1}$ is a bounded sequence.
Therefore we can choose one single contour $\CC^-$ for which the integral representation $(\ref{defsasymptotics})$ holds for all $N\geq 1$.

Once we have the integral formula in the form $(\ref{rewrittenformula1})$, there are several techniques to obtain asymptotic expansions as $N\rightarrow\infty$, such as the saddle-point method; see the classical text \cite{C}.

The plan is the following.
First we show that, for the study of asymptotics as $N \rightarrow \infty$, we can replace contour $\CC^-$ in formula $(\ref{rewrittenformula1})$ by a finite contour going through the critical point of $w(z)$.
Second, we prove $\lim_{N\rightarrow\infty}{\psi(z; \lambda(N), \theta)} = \sqrt{\frac{z - \theta}{z}}\cdot\Psi\left( \frac{z}{z - \theta}; \omega, \theta \right)$, uniformly on compact subsets, and moreover argue that we can replace $\psi(z; \lambda(N), \theta)$ by $\sqrt{\frac{z - \theta}{z}}\cdot\Psi\left( \frac{z}{z - \theta}; \omega, \theta \right)$ in the integral without modifying the asymptotics as $N\rightarrow\infty$. Third, after the modifications, we apply the saddle-point method to finish the proof.
The first and second items in this plan are carried out in Subsection $\ref{sec:deformationapp1}$ and the saddle-point method analysis is given in Subsection $\ref{saddleapp1}$.

\subsection{Deformation of the integral}\label{sec:deformationapp1}

We make use of Cauchy's theorem to deform $\CC^-$ so that it passes through the \textit{critical points} of $w(z)$.
Recall that the contour $\CC^-$ consists of the segment $[M + r\ii, \ M - r\ii]$ and the horizontal lines $[M + r\ii, -\infty+r\ii)$, $[M - r\ii, -\infty - r\ii)$, for some $M > \lambda_1$ and some $r > 0$.
The critical points $z_0$ of $w(z)$ are the solutions to the equation $w'(z) = 0$.
For $w(z)$, as defined in $(\ref{defsasymptotics})$ above, the equation is $y - \ln{z} + \ln{(z-\theta)} = 0$, and has the unique solution
\begin{equation}\label{criticalpt}
z_0 = \frac{\theta}{1 - e^{-y}}.
\end{equation}
Observe that $z_0$ depends on $y$ and $\theta$, but for simplicity we write $z_0$ instead of $z_0(y, \theta)$. We claim that there exists $\delta > 0$ such that for any compact subset $K_{\delta} \subset V_{\delta} := \{v\in\C : 0 < \Re v < \delta, \ -\pi < \Im v < \pi\}$ and any $y\in K_{\delta}$, we can find a contour $\gamma(s)$, $s\in\R$, with the following features:
\begin{enumerate}
	\item The contour $\gamma(s)$, $s\in\R$, is a piecewise smooth contour in $\C$, with $\gamma(0) = z_0$.
	\item There exists $\epsilon_0 > 0$ so that $\gamma(s) = z_0 + se^{\ii\phi}\ \forall s\in[-\epsilon_0, \epsilon_0]$ and $\phi$ is such that $w''(z_0)e^{2\ii\phi} < 0$. In the terminology of \cite{C}, this means that $\gamma$ is locally in the \textit{critical direction}.
	\item The contour $\gamma$ is positively oriented and contains $(-\infty, L]$ in its interior, where we denote
\begin{equation*}
L  = L(\theta) \myeq \sup_{N\geq 1}{\left|\frac{\lambda_1(N)}{N}\right|} + \theta + 1.
\end{equation*}
	\item $\Re w(\gamma(0)) = \sup_{t\in\R}{\Re w(\gamma(t))}$ and $\Re w(\gamma(0)) = \Re w(z_0) > \Re w(t)$ for all $t\neq 0$.
	\item $|\Im z|$ is constant, for $z\in\gamma$ with $|\Re z|$ is large enough. For convenience, let us require that $|\Im z| = r$ for $z\in\gamma$ with $|\Re z|$ large enough, and $r > 0$ is the positive real number used in the definition of $\CC^-$.
\end{enumerate}

Condition (5) ensures that $\int_{\gamma}{\exp(Nw(z))\psi(z; \lambda(N), \theta)dz}$ converges absolutely and also that one needs only to modify a compact subset of $\CC^-$ in order to obtain $\gamma$. Condition (4) will allow us to estimate the value of the whole integral by the contribution of a small neighborhood of the critical point $z_0$. In fact, away from $z_0$, the real part of $w(z)$ decreases implying that the integrand decreases exponentially. Condition (3) tells us that we can replace $\CC^-$ by $\gamma$ into the integral representation $(\ref{rewrittenformula1})$ because no new poles are picked up in a deformation from $\CC^-$ into $\gamma$. The conditions (1), (2) are useful in applying the saddle-point method in the next section.

The proof of existence of a contour $\gamma$ satisfying the conditions above was given in \cite[Section 4.2]{GP}, except that condition (3) was not mentioned and condition (2) was expressed differently. Let us briefly review the proof in \cite{GP} by considering these additional points. Our first task is to look at the level line $\{ z\in\C : \Re w(z) = \Re w(z_0) \}$. From the Taylor expansion of $w(z)$ near $z_0$, there are four directions to the level line $\Re w(z) = \Re w(z_0)$.
Moreover, for large values of $R > 0$, the level line $\Re w(z) = \Re (yz - H(z; \theta)) = \Re w(z_0)$ intersects the circle $\TT_R := \{z\in\C : |z| = R\}$ at exactly two points (which are near the two points of intersection of the line $\Re yz = \Re w(z_0)$ with $\TT_R$).
This means that the level line $\Re w(z) = \Re w(z_0)$ has a loop.
Due to the maximum principle, the loop must contain some points in the interval $[0, \theta]$, which are the only points on which $\Re w(z)$ is not a harmonic function.
When $-\pi < \Im y < 0$, $\Im y = 0$ and $0 < \Im y < \pi$, we have that $z_0 = \theta/(1 - e^{-y})$ has positive, zero and negative imaginary part, respectively; moreover the straight lines $\Re yz = \Re w(z_0)$ have negative, infinite and positive slope, respectively.
Therefore the level lines $\Re w(z) = \Re w(z_0)$ look qualitatively different in these cases, for example, see Figures $\ref{fig:sub1}$ and $\ref{fig:sub2}$ for the cases $\Im y < 0$ and $\Im y > 0$.

\begin{figure}
\centering
\begin{subfigure}{.45\textwidth}
  \centering
  \includegraphics[width=.4\linewidth]{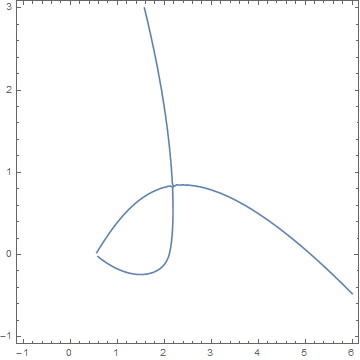}
  \caption{$y = 1 - \ii$ ($\theta = 2$)}
  \label{fig:sub1}
\end{subfigure}
\begin{subfigure}{.45\textwidth}
  \centering
  \includegraphics[width=.4\linewidth]{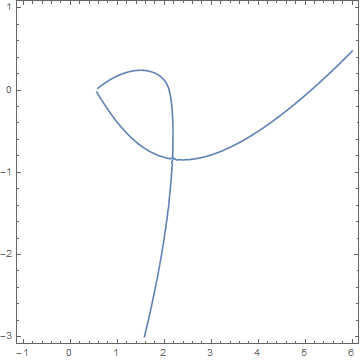}
  \caption{$y = 1 + \ii$ ($\theta = 2$)}
  \label{fig:sub2}
\end{subfigure}
\caption{Level lines of $\Re(yz - H(z; 2)) = \Re(yz_0 - H(z_0; 2))$ for two values of $y$}
\end{figure}

The complex plane is divided into three regions by the level line $\{z\in\C : \Re w(z) = \Re w(z_0)\}$, one is a bounded region and two are unbounded regions, one of which contains $+\infty$ and the other one contains $-\infty$.
The bounded region and the unbounded region that contains $+\infty$ are disjoint and their union is $\{z\in\C : \Re w(z) > \Re w(z_0)\}$. On the other hand, the unbounded region that contains $-\infty$ is $\{z\in\C : \Re w(z) < \Re w(z_0)\}$.
Thus it is possible to deform and reparametrize a compact portion of the contour $\CC^-$ so that the new contour is fully contained in the region $\{z\in\C : \Re w(z) \leq \Re w(z_0)\}$ and passes through $z_0$ at time $0$.
Thus it is not difficult to find a contour that satisfies conditions (1), (4) and (5) above. We need, however, to prove that it can be deformed to satisfy (2) and (3).

The contour can be modified near $z_0$ to be in the critical direction, thus satisfying condition (2), the only issue being that there may not exist $\epsilon_0 > 0$ such that $[z_0 - \epsilon_0e^{\ii\phi}, z_0 + \epsilon_0e^{\ii\phi}] \cap (-\infty, L] = \emptyset$.
We argue this is always the case, however. First, a simple calculation shows that for sufficiently small $\epsilon_1, \delta >0$, and any $y\in\C$ with $0 < \Re y < \delta$, $|\Im y| \leq \epsilon_1$, the intersection of $x$-axis with the line $\{z = z_0 + te^{\ii\phi} : t\in\R\}$ is a positive real point larger than $L$, and thus for any $\epsilon_0 > 0$ we will have $[z_0 - \epsilon_0e^{\ii\phi}, z_0 + \epsilon_0e^{\ii\phi}] \cap (-\infty, L] = \emptyset$.
Second, for $y\in\C$ with $0 < \Re y < \delta$, $|\Im y| \in (\epsilon_1, \pi - \epsilon_1)$, we easily find $|\Im z_0| > (\theta e^{-\delta}\sin \epsilon_1)/5$; thus if we set $\epsilon_0 \myeq (\theta e^{-\delta}\sin \epsilon_1)/10$, we again guarantee $[z_0 - \epsilon_0e^{\ii\phi}, z_0 + \epsilon_0e^{\ii\phi}] \cap (-\infty, L] = \emptyset$, if $z_0 = z_0(y, \theta)$ and $|\Im y| \in (\epsilon_1, \pi - \epsilon_1)$.
Thus let $\delta > 0$ be chosen as above.
For any compact subset $K_{\delta} \subset V_{\delta}$, we can find $\epsilon_1 > 0$ small enough so that $K_{\delta} \subset V_{\delta} \cap \{z\in\C : |\Im z| < \pi - \epsilon_1\}$ and then by setting $\epsilon_0 = (\theta e^{-\delta}\sin \epsilon_1)/10$, we are assured that we can deform the contour so as to satisfy condition (2), without violating (3).

As for condition (3), we need to prove it is always possible to deform the contour to make it contain $(-\infty, L]$ in its interior, but we need to guarantee that condition (4) remains in place.
The only way that would prevent a contour to go around $(-\infty, L]$ without violating (4) is if the rightmost point of intersection between the curve $\Re w(z) = \Re w(z_0)$ and the $x$-axis is to the left of $L$.
We can easily show that $\Re w(x)$ is an increasing function on $[(1 - e^{-\Re y})^{-1}, \infty)$, its value at $x = (1 - e^{-\Re y})^{-1}$ is at most equal to $\Re w(z_0) = \theta(\Re y + 1 - \ln{\theta} + \ln{|1 - e^{-y}|})$ and $\Re w(x)$ goes to infinity with $x$. Therefore the rightmost point of intersection between the curve $\Re w(z) = \Re w(z_0)$ and the $x$-axis is to the right of $(1 - e^{-\Re y})^{-1}$. Thus if $y$ is such that $(1 - e^{-\Re y})^{-1} > L$, it is impossible that the rightmost point of intersection between the curve $\Re w(z) = \Re w(z_0)$ and the $x$-axis is to the left of $L$. By decreasing the value of $\delta > 0$ to a smaller positive value, we can guarantee $(1 - e^{-\Re y})^{-1} > L$, for all $y\in V_{\delta}$, and therefore we can make the contour $\gamma$ so that it satisfies (3), as we wished.

We can actually modify the contour $\gamma$ to strengthen condition (4).
Since $w(z)$ is smooth near $z_0$, we can reduce $\delta > 0$, and also $\epsilon_0 > 0$ if necessary, so that
both of the rightmost points of intersection between the level lines $\Re w(z) = \Re w(z_0 \pm \epsilon_0)$ and the $x$-axis are to the right of $L$.
The implication is that we can deform $\gamma$ in such a way that $\Re w(\gamma(t))$ attains a maximum at $t = 0$, then it decreases as $t$ goes from $0$ to $\pm\epsilon_0$ and then it takes values strictly less than $\min\{\Re\gamma(\epsilon_0), \Re\gamma(-\epsilon_0)\}$, for $|t| > \epsilon_0$.
We thus can postulate a new condition for contour $\gamma$:

\begin{enumerate}
	\setcounter{enumi}{5}
	\item $\sup_{|t| > \epsilon_0}{\Re w(\gamma(t))} \leq \min\{\Re w(\gamma(t - \epsilon_0 e^{\ii \phi})), \Re w(\gamma(t + \epsilon_0 e^{\ii \phi}))\} = \inf_{|t|\leq \epsilon_0}{\Re w(\gamma(t))}$.
\end{enumerate}

We have argued thus far that the contour $\CC^-$ in the integral representation $(\ref{rewrittenformula1})$ of $P_{\lambda(N)}(e^y; N, \theta)$ can be replaced by a contour $\gamma$, satisfying conditions (1)-(6) above. Next we make some estimates on this expression and reduce it to a form on which we can apply the saddle-point method.

The first claim is that, for large enough $M > 0$, the contribution of the portion $\gamma_{<-M} := \gamma\cap\{z\in\C : \Re z < -M\}$ is negligible, as $N\rightarrow\infty$.
We only need very crude estimates. Let $a^+ > 0 > a^-$ be reals such that $a^-N \leq \lambda_N(N) \leq \dots \leq \lambda_1(N) \leq a^+N$, for all $N \geq 1$.
Observe they must exist because $\{\lambda(N)\}_{N \geq 1}$ is a VK sequence of signatures.
We have $|Nz - s| < |Nz - s - \theta|$, for all $z\in\CC^-$, as long as $\Re (Nz - s) < 0$.
It follows that the absolute value of $\Gamma(Nz - s - \theta)/\Gamma(Nz - s)$ is less than the absolute value of $\Gamma(Nz +1 - s - \theta)/\Gamma(Nz + 1 - s)$, as long as $\Re (Nz - s) < 0$.
Now consider any $z\in\CC^-$ such that $\Re z - a^- < 0$ and let $K = K(z)\in\Z_{\geq 0}$ be the nonnegative integer such that $-1\leq \Re z + K - a^- < 0$.
Then we have that the absolute value of $\Gamma(Nz - \lambda_i(N) - \theta)/\Gamma(Nz - \lambda_i(N))$ is at most equal to the absolute value of $\Gamma(N(z + K + a^-) - \theta)/\Gamma(N(z + K + a^-))$.
Note that, if $z\in\CC^-$ and $\Re z$ is small enough, say $\Re z < a^-$ is enough, then by definition of $\CC^-$ the imaginary part $|\Im z|$ is constant and equal to some $r > 0$. Therefore $z + K + a^-$ belongs to the disconnected, but compact, set $\{z\in\C : -1 \leq \Re z \leq 0, \ |\Im z| = r\}$.
By well known asymptotics of the Gamma function, it follows that there exists a constant $c > 0$ such that the absolute value of $\Gamma(N(z + K + a^-) - \theta)/\Gamma(N(z + K + a^-))$ is at most $(Nc)^{-\theta}$.
The conclusion from the estimate above is that
\begin{equation*}
\begin{gathered}
\sup_{N \geq 1}\sup_{\substack{z\in\CC^- \\ \Re z < a^-}}{ \left|  N^{N\theta}\prod_{i=1}^N{\frac{\Gamma(Nz+1-(\lambda_i(N) + \theta(N - i + 1)))}{\Gamma(Nz+1-(\lambda_i(N) + \theta(N - i)))}} \right| } \leq N^{N\theta}\prod_{i=1}^N{(Nc)^{-\theta}} = c^{-N\theta},\\
\Longrightarrow |\psi(z; \lambda(N), \theta)| \leq c^{-N\theta}\cdot\exp(N\Re H(z; \theta)) = \exp(N(\Re H(z; \theta) - \theta \ln{c})).
\end{gathered}
\end{equation*}

Write $\int_{\gamma_{<-M}}{\exp(Nw(z))\psi(z)dz} = \exp(Nw(z_0))\int_{\gamma_{<-M}}{\exp(N(w(z) - w(z_0)))\psi(z)dz}$ and observe that the prefactor $\frac{\Gamma(\theta N)N^{1 - \theta N}}{(e^y - 1)^{\theta N - 1}2\pi\ii}$, in $(\ref{rewrittenformula1})$, times $\exp(Nw(z_0)) = (e^y - 1)^{N\theta}e^{N\theta}\theta^{-N\theta}$ has absolute value of order $O(N^{1/2})$, because of Stirling's formula.
From the estimates above, we have $\int_{\gamma_{<-M}}{|\exp(N(w(z) - w(z_0)))\psi(z)|dz}\leq \int_{\gamma_{<-M}}{\exp(N(\Re w(z) - \Re w(z_0) + \Re H(z; \theta) - \theta \ln{c}))dz}$.
But $\Re w(z) - \Re w(z_0) + \Re H(z; \theta) - \theta \ln{c}$ decreases as $z \in \CC^-$ ranges from $z = -M \pm r\ii$ to $-\infty \pm r\ii$, and it is always negative if $M > 0$ is large enough so that $\Re w(z) - \Re w(z_0) + \Re H(z; \theta) - \theta \ln{c} < 0$ for all $z = -M \pm r\ii$. It follows that the absolute value of $\int_{\gamma_{<-M}}{\exp(N(w(z) - w(z_0)))\psi(z; \lambda(N), \theta)dz}$ is exponentially small, as $N\rightarrow\infty$, and in particular it is of order $o(N^{-1/2})$. Thus we conclude that to calculate the limit of $(\ref{rewrittenformula1})$ as $N$ tends to infinity, we can replace $\CC^-$ with the compact contour $\gamma_{\geq -M} = \gamma\cap\{z\in\C : \Re z \geq -M\}$, and pick up an additive error of order $o(1)$.

The next claim is that we can replace $\psi(z; \lambda(N), \theta)$ in the integral formula $(\ref{rewrittenformula1})$, but with contour $\CC^-$ replaced by $\gamma_{\geq -M}$, by $\sqrt{\frac{z - \theta}{z}}\cdot\widetilde{\Psi}(z; \omega, \theta)$ (the function $\widetilde{\Psi}$ is defined in the Lemma $\ref{limitphi}$ below) and the additive error as a result is of order $o(1)$, as $N$ tends to infinity. This is a consequence of the following lemma, whose proof we postpone till the end of this section, and which implies the uniform convergence $\lim_{N\rightarrow\infty}{\psi(z; \lambda(N), \theta)} = \sqrt{\frac{z - \theta}{z}}\cdot\widetilde{\Psi}(z; \omega, \theta)$ on the compact subset $\gamma_{\geq -M}$.

\begin{lem}\label{limitphi}
The following limit
\begin{equation*}
\lim_{N\rightarrow\infty}{\psi(z; \lambda(N), \theta)} = \sqrt{\frac{z - \theta}{z}}\times\widetilde{\Psi}(z; \omega, \theta),
\end{equation*}
\begin{equation*}
\widetilde{\Psi}(z; \omega, \theta) \myeq \exp\left(\frac{\theta\gamma^+}{z - \theta} - \frac{\theta\gamma^-}{z} \right)\prod_{i=1}^{\infty}{\frac{(1 + \theta \beta_i^+/(z - \theta))(1 - \theta\beta_i^-/z)}{(1 - \alpha_i^+/(z - \theta))^{\theta}(1 + \alpha_i^-/z)^{\theta}}},
\end{equation*}
holds pointwise for all $z$ in the complex plane cut along $(-\infty, \theta]$. The square root in the limiting expression, as well as the powers $x^{\theta} = \exp(\theta\ln{x})$ in the denominator of $\widetilde{\Psi}$, take their principal values.
\end{lem}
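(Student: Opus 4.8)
The plan is to pass to logarithms, strip off an elementary factor that produces the square root, and then treat the two halves of the remaining product -- one governed by $\lambda^{+}(N)$, the other by $\lambda^{-}(N)$ -- by Stirling's formula. \textbf{Reduction.} Let $\psi_0(z;\theta)$ be the expression on the right of $(\ref{defsasymptotics})$ with $\lambda(N)$ replaced by the zero signature of length $N$. Its $\Gamma$-product telescopes to $\Gamma(N(z-\theta)+1)/\Gamma(Nz+1)$, and substituting Stirling's formula $\Gamma(w+1)=\sqrt{2\pi}\,w^{\,w+1/2}e^{-w}(1+O(1/w))$ together with $H(z;\theta)=z\ln z-(z-\theta)\ln(z-\theta)-\theta$ gives $\psi_0(z;\theta)=\sqrt{(z-\theta)/z}\,(1+O(1/N))$. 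Hence it suffices to prove that $\psi(z;\lambda(N),\theta)/\psi_0(z;\theta)\to\widetilde{\Psi}(z;\omega,\theta)$. Put $B(x):=\Gamma(x-\theta)/\Gamma(x)$ and $w_i:=N(z-\theta)+\theta i+1$; then $\psi/\psi_0=\prod_{i=1}^{N}B(w_i-\lambda_i(N))/B(w_i)$. Cutting the product at the index $p$ where $\lambda(N)$ changes sign and substituting $k=N+1-i$ in the negative part, this factors as $\Phi_N^{+}(z)\Phi_N^{-}(z)$, where
\begin{equation*}
\Phi_N^{+}(z)=\prod_{i=1}^{p}\frac{B(w_i-\lambda_i^{+}(N))}{B(w_i)},\qquad
\Phi_N^{-}(z)=\prod_{k=1}^{q}\frac{B(\widetilde{w}_k+\lambda_k^{-}(N))}{B(\widetilde{w}_k)},\qquad \widetilde{w}_k:=Nz-\theta(k-1)+1 .
\end{equation*}
Since $\widetilde{\Psi}$ itself factors as a ``$+$''-part $\exp(\theta\gamma^{+}/(z-\theta))\prod_i(1+\theta\beta_i^{+}/(z-\theta))(1-\alpha_i^{+}/(z-\theta))^{-\theta}$ times the ``$-$''-part $\exp(-\theta\gamma^{-}/z)\prod_i(1-\theta\beta_i^{-}/z)(1+\alpha_i^{-}/z)^{-\theta}$, it is enough to show $\Phi_N^{+}(z)\to$ the ``$+$''-part; the argument for $\Phi_N^{-}$ is the same with $\widetilde{w}_k\sim Nz$ replacing $w_i\sim N(z-\theta)$ and $B(x+m)/B(x)$ replacing $B(x-m)/B(x)$.

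\textbf{Corner/bulk analysis of $\Phi_N^{+}$.} A careful application of Stirling's formula (the $O(1/x)$ correction to $\log B$ cancels to leading order) gives $\log\Phi_N^{+}(z)=-\theta\sum_{i=1}^{p}\ln(1-\lambda_i^{+}(N)/w_i)+O(1/N)$, the error being controlled by $\sum_i\lambda_i^{+}(N)=|\lambda^{+}(N)|=O(N)$; throughout, $z$ is kept away from the points $\theta+\alpha_i^{+}$ (the poles of $\widetilde{\Psi}$), so all quantities $w_i-\lambda_i^{+}(N)$ have modulus of order $N$. Fix a small $\epsilon>0$ distinct from every $\alpha_i^{+}$ and call $i$ a \emph{corner} if $\lambda_i^{+}(N)\ge\epsilon N$. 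There are at most $\lceil\delta^{+}/\epsilon\rceil$ corners, and for $N$ large the corner set equals $\{i:\alpha_i^{+}>\epsilon\}$, on which $\lambda_i^{+}(N)/w_i\to\alpha_i^{+}/(z-\theta)$; so the corner part of $\log\Phi_N^{+}(z)$ tends to $-\theta\sum_{i:\alpha_i^{+}>\epsilon}\ln(1-\alpha_i^{+}/(z-\theta))$. On the remaining indices $\lambda_i^{+}(N)/w_i$ is uniformly small, so $-\theta\ln(1-t)=\theta t+O(t^{2})$ with the quadratic errors summing to $O(\epsilon)$, and the non-corner part equals $\theta\sum_{i\ \textrm{non-corner}}\lambda_i^{+}(N)/w_i+O(\epsilon)$. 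Writing $\lambda_i^{+}(N)=\#\{r\ge1:\lambda^{+}(N)'_r\ge i\}$ and using the elementary limit $\sum_{i=1}^{L}(N(z-\theta)+\theta i)^{-1}\to\theta^{-1}\ln(1+\theta\ell/(z-\theta))$ whenever $L/N\to\ell$, one identifies $\lim_N\theta\sum_{i\ \textrm{non-corner}}\lambda_i^{+}(N)/w_i$ with $\theta\int_0^1(z-\theta+\theta x)^{-1}\,d\mu(x)$, where $\mu$ is the limit of the measures $N^{-1}\sum_{i\ \textrm{non-corner}}\lambda_i^{+}(N)\,\delta_{i/N}$. This $\mu$ has mass $\sum_r\beta_r^{+}$ on $(0,1]$, with ``density'' $\#\{r:\beta_r^{+}>x\}$ there, and an atom at $0$ equal to (its total mass $\delta^{+}-\sum_{i:\alpha_i^{+}>\epsilon}\alpha_i^{+}$) minus $\sum_r\beta_r^{+}$; integrating, and then letting $\epsilon\to0$ so the atom becomes $\gamma^{+}$, gives $\log\Phi_N^{+}(z)\to\theta\gamma^{+}/(z-\theta)+\sum_r\ln(1+\theta\beta_r^{+}/(z-\theta))-\theta\sum_i\ln(1-\alpha_i^{+}/(z-\theta))$, which is $\log$ of the ``$+$''-part. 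The same computation yields the ``$-$''-part for $\Phi_N^{-}$, and multiplying the three limits finishes the proof.

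\textbf{Where the difficulty lies.} The delicate step is the bulk analysis, above all the appearance of the $\gamma^{+}$-term: it is the combined contribution of the rows of $\lambda^{+}(N)$ of \emph{intermediate} size -- those growing to infinity but of size $o(N)$ -- which escape both the finite corner sum and the column-by-column accounting. Making this rigorous requires uniform-in-$N$ tail estimates as the auxiliary thresholds are removed ($\epsilon\to0$, and the column index $r\to\infty$), for which the VK hypotheses $\sum_i(\alpha_i^{+}+\alpha_i^{-}+\beta_i^{+}+\beta_i^{-})<\infty$ and $|\lambda^{\pm}(N)|/N\to\delta^{\pm}$ are precisely what is needed; everything else is routine Gamma-function asymptotics.
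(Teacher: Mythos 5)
Your argument is correct in substance but takes a genuinely different route from the paper's. The common part is the reduction: your $\psi_0$ is exactly the paper's $\phi_1$, both converging to $\sqrt{(z-\theta)/z}$ by Stirling, which leaves $\psi/\psi_0=\phi_2$ split into a $\lambda^+$-part and a $\lambda^-$-part. From there the paper rewrites the $\lambda^+$-part as a product over the boxes of $\lambda^+(N)$, regroups by hooks so the factors are indexed by the modified Frobenius coordinates $a_i^+,b_i^+$, and proves convergence of the Taylor coefficients of the logarithm in powers of $(z-\theta)^{-1}$: the linear coefficient is $\theta\sum_i(a_i^++b_i^+)/N\to\theta\delta^+$, which yields $\gamma^+$ at once via $\delta^+=\gamma^++\sum_i(\alpha_i^++\beta_i^+)$, while the higher coefficients need the power-sum limits $(\ref{claimfatou})$, proved by a two-sided Fatou argument, plus $d(\lambda^\pm(N))=O(\sqrt N)$ to control cross terms. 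You instead argue row by row: after the Stirling linearization $\log\Phi_N^+\approx-\theta\sum_i\ln\bigl(1-\lambda_i^+(N)/w_i\bigr)$ you split at the threshold $\epsilon N$, let the finitely many macroscopic rows produce the $\alpha$-factors, and read the bulk as the integral of $\theta/(z-\theta+\theta x)$ against the row-profile measures $N^{-1}\sum_i\lambda_i^+(N)\delta_{i/N}$, whose weak limit carries the $\beta$'s in its absolutely continuous part and $\gamma^+$ in the atom at $0$; this is the Vershik--Kerov/Okounkov--Olshanski style of argument, and it buys a transparent explanation of where $\gamma^+$ comes from (rows of intermediate size) while replacing the Fatou/power-sum machinery by soft weak convergence, at the cost of a double limit ($N\to\infty$ then $\epsilon\to0$). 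The bulk step you leave as a sketch is indeed fillable from the VK hypotheses exactly as you indicate: the corner set has at most $\lceil\delta^+/\epsilon\rceil$ elements and stabilizes, and at most $|\lambda^+(N)|/(xN)=O(1/x)$ columns exceed height $xN$, so $\mu_N((x,1])\to\sum_r(\beta_r^+-x)^+$ term by term, which together with convergence of total masses gives the claimed weak limit; so I see no conceptual gap. One shared caveat: your assertion that all $|w_i-\lambda_i^+(N)|$ are of order $N$ (hence the validity of the log-Stirling expansion of $B$) really requires $z$ non-real or real with $z>\theta+\alpha_1^+$, since for real $z\in(\theta,\theta+\alpha_1^+)$ the arguments $w_i-\lambda_i^+(N)$ can be large negative reals near poles of $\Gamma$; the paper's box product suffers from the same limitation, and both arguments honestly establish the limit on the region actually used on the contour in Section \ref{sec:deformationapp1}.
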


The final claim is that contour $\gamma_{\geq -M}$ can be replaced by the neighborhood $\gamma(\epsilon_0) \myeq \gamma \cap \{z\in\C : |z - z_0| \leq \epsilon_0\} = [z_0 - \epsilon_0 e^{\ii \phi}, z_0 + \epsilon_0 e^{\ii \phi}]$ with no change in the asymptotics of $(\ref{rewrittenformula1})$, as $N\rightarrow\infty$.
This is a consequence of condition (6) of contour $\gamma$, which implies that $\sup_{|t| \geq \epsilon_0}{\Re w(\gamma(t))} \leq \inf_{|t| \leq \epsilon_0}{\Re w(\gamma(t))}$ and therefore the contribution of $\gamma_{\geq -M} \setminus \gamma(\epsilon_0)$ is
\begin{equation*}
\frac{\Gamma(\theta N)N^{1-\theta N}}{(e^y - 1)^{\theta N - 1}2\pi\ii}\exp(Nw(z_0))\int_{\gamma_{\geq -M} \setminus \gamma(\epsilon_0)}{\exp(N (w(z) - w(z_0)) )\sqrt{\frac{z - \theta}{z}}\widetilde{\Psi}(z; \omega, \theta)dz}.
\end{equation*}
Since $\exp(Nw(z_0)) = (e^y - 1)^{N\theta}e^{N\theta}\theta^{-N\theta}$, then the factor before the integral is of order $O(N^{1/2})$.
On the other hand, the contribution of the integral above can be bounded by a constant factor times $\exp(N(\Re w(z_0 \pm \epsilon_0 e^{\ii \phi}) - \Re w(z_0) )) \leq const\times \exp( N w''(z_0) (\epsilon_0 e^{\ii\phi})^2/2 ) =  const\times \exp(N w''(z_0)e^{2\ii\phi} \epsilon_0^2/2)$, which is exponentially decreasing on $N$ by condition (2), and in particular of order $o(N^{-1/2})$.
Therefore the contribution of $\gamma_{\geq M} \setminus \gamma(\epsilon_0)$ is of order $o(1)$, and our claim is proved. 

In summary, we have just given reasoning for the following successive estimations
\begin{equation*}
\begin{gathered}
\frac{\Gamma(\theta N)N^{1-\theta N}}{(e^y - 1)^{\theta N - 1}2\pi\ii}\int_{\CC^-}{\exp(Nw(z))\psi(z; \lambda(N), \theta) dz}\\
= \frac{\Gamma(\theta N)N^{1-\theta N}}{(e^y - 1)^{\theta N - 1}2\pi\ii}\int_{\gamma}{\exp(Nw(z))\psi(z; \lambda(N), \theta) dz}\\
= \frac{\Gamma(\theta N)N^{1-\theta N}}{(e^y - 1)^{\theta N - 1}2\pi\ii}\int_{\gamma_{\geq -M}}{\exp(Nw(z))\psi(z; \lambda(N), \theta) dz} + o(1)\\
= \frac{\Gamma(\theta N)N^{1-\theta N}}{(e^y - 1)^{\theta N - 1}2\pi\ii}\int_{\gamma_{\geq -M}}{\exp(Nw(z)) \sqrt{\frac{z - \theta}{z}} \widetilde{\Psi}(z; \omega, \theta) dz}\cdot (1 + o(1)) + o(1)\\
= \frac{\Gamma(\theta N)N^{1-\theta N}}{(e^y - 1)^{\theta N - 1}2\pi\ii}\int_{\gamma(\epsilon_0)}{\exp(Nw(z)) \sqrt{\frac{z - \theta}{z}} \widetilde{\Psi}(z; \omega, \theta) dz} \cdot (1 + o(1)) + o(1).
\end{gathered}
\end{equation*}

The asymptotic expansion of the factor before the integral can be calculated by Stirling's formula.
Thus we only need to find the limit, as $N\rightarrow\infty$, of the following expression and it will be carried out in the next subsection:
\begin{equation}\label{toapplysaddle}
\begin{gathered}
\frac{\Gamma(\theta N)N^{1-\theta N}}{(e^y - 1)^{\theta N - 1}2\pi\ii}\int_{\gamma(\epsilon_0)}{\exp(Nw(z)) \sqrt{\frac{z - \theta}{z}} \widetilde{\Psi}(z; \omega, \theta) dz}\\
= \left(1 + O\left(\frac{1}{N}\right)\right)\times\frac{N^{1/2}\theta^{\theta N - 1/2}}{(e^y - 1)^{\theta N - 1}e^{\theta N}\sqrt{2\pi}\ii} \int_{\gamma(\epsilon_0)}{\exp(Nw(z))\sqrt{\frac{z - \theta}{z}}\widetilde{\Psi}(z; \omega, \theta)dz}.
\end{gathered}
\end{equation}

\begin{proof}[Proof of Lemma $\ref{limitphi}$]
Let us define
\begin{eqnarray*}
\phi_1(z; N, \theta) &\myeq& \exp(NH(z; \theta)) N ^{\theta N} \frac{\Gamma(N(z - \theta) + 1)}{\Gamma(Nz + 1)}\\
\phi_2(z; \lambda(N), \theta) &\myeq& \prod_{i=1}^N{\frac{\Gamma(Nz + 1 - (\lambda_i(N) + \theta(N - i + 1)))\Gamma(Nz + 1 - \theta(N - i))}{\Gamma(Nz + 1 - (\lambda_i(N) + \theta(N - i)))\Gamma(Nz + 1 - \theta(N - i + 1))}}
\end{eqnarray*}
and observe that $\psi(z; \lambda(N), \theta) = \phi_1(z; N, \theta)\phi_2(z; \lambda(N), \theta)$. We find the limits of both $\phi_1$ and $\phi_2$ as $N\rightarrow\infty$ .

From Stirling's formula, $\Gamma(w) = \sqrt{2\pi} (w - 1)^{w - 1/2}e^{1 - w}\left( 1 + O(1/w) \right)$, for $|w|\rightarrow\infty$, uniform on $|\arg w| \leq \pi - \epsilon < \pi$. It follows that
\begin{eqnarray*}
\phi_1 = \exp(N(z\ln{z} - (z - \theta)\ln{(z - \theta)} - \theta)) N^{\theta N} e^{\theta N}\frac{(N(z - \theta))^{N(z-\theta) + 1/2}}{(Nz)^{Nz + 1/2}}\times\left( 1 + O(1/N) \right).
\end{eqnarray*}
After obvious simplifications, we have $\phi_1 = \sqrt{(z - \theta)/z}\times(1 + O(1/N))$, as $N\rightarrow\infty$, and therefore
\begin{equation}\label{phi1limit}
\lim_{N\rightarrow\infty}{\phi_1(z; N, \theta)} = \sqrt{\frac{z - \theta}{z}}
\end{equation}
for any $z\in\C \setminus (-\infty, \theta]$. Now we look at $\phi_2$, which can be rewritten as
\begin{equation}\label{phi2}
\begin{gathered}
\phi_2(z; \lambda(N), \theta) = \prod_{i=1}^{\ell(\lambda^+(N))}{\frac{\Gamma(Nz + 1 - \theta(N - i + 1) - \lambda_i^+(N))\Gamma(Nz + 1 - \theta(N - i))}{\Gamma(Nz + 1 - \theta(N - i + 1))\Gamma(Nz + 1 - \theta(N - i) - \lambda_i^+(N))}}\\
\times\prod_{i=1}^{\ell(\lambda^-(N))}{\frac{\Gamma(Nz + 1 - \theta i + \lambda_i^-(N))\Gamma(Nz + 1 - \theta i + \theta)}{\Gamma(Nz + 1 - \theta i)\Gamma(Nz + 1 - \theta i + \lambda_i^-(N) + \theta)}}
\end{gathered}
\end{equation}
We call $\phi_2^{(1)}(z; \lambda(N), \theta)$ and $\phi_2^{(2)}(z; \lambda(N), \theta)$ to the first and second products of $(\ref{phi2})$, respectively. We claim
\begin{equation}\label{phi2limit}
\lim_{N\rightarrow\infty}{\phi_2^{(1)}(z; \lambda(N), \theta)} = \exp\left( \frac{\theta \gamma^+}{z - \theta} \right)\times\prod_{i=1}^{\infty}{\frac{1 + \theta\beta_i^+/(z - \theta)}{(1 - \alpha_i^+/(z - \theta))^{\theta}}}.
\end{equation}
Observe that the right side above is part of the expression in the definition of $\widetilde{\Psi}(z; \omega, \theta)$. Similarly, $\phi_2^{(2)}(z; \lambda(N), \theta)$ will converge to the remaining product in the definition of $\widetilde{\Psi}(z; \omega, \theta)$, and the proof is analogous. Therefore we shall only prove $(\ref{phi2limit})$, leaving the second part as an exercise to the reader.

From $\Gamma(t + 1) = t\Gamma(t)$, we can write the expression $\phi_2^{(1)}(z; \lambda(N), \theta)$ as
\begin{eqnarray*}
\phi_2^{(1)}(z; \lambda(N), \theta) &=& \prod_{i=1}^{\ell(\lambda^+(N))}{\frac{(Nz - \theta(N - i))\cdots (Nz + 1 - \lambda_i^+(N) - \theta(N - i))}{(Nz - \theta(N - i + 1))\cdots (Nz + 1 - \lambda_i^+(N) - \theta(N - i + 1))}}\\
&=&\prod_{s\in\lambda^+(N)}{\frac{N(z - \theta) + \theta(l'(s) + 1) - a'(s)}{N(z - \theta) + \theta l'(s) - a'(s)}}.
\end{eqnarray*}
Each term in the last product is indexed by a square in the Young diagram of $\lambda$. Recall that the $k$-th hook in a Young diagram $\lambda^+(N)$ is the set of its squares with coordinates $(i, j)$ such that $\min\{i, j\} = k$. Let $d(\lambda^+(N))$ be the diagonal length of the partition $\lambda^+(N)$, or equivalently the number of distinct hooks in the Young diagram of $\lambda^+(N)$. If we group the terms in the product above by the hook to which they belong, we can express the product as
\begin{equation}\label{afterhooks}
\begin{gathered}
\phi_2^{(1)}(z; \lambda(N), \theta) = \prod_{i=1}^{d(\lambda^+(N))} \left\{\frac{N(z - \theta) + \theta \lambda^+(N)'_i - (i-1)}{N(z - \theta) + \theta(i - 1) - (i-1)}\right.\times\\
\left.\times\frac{\Gamma(N(z - \theta) + \theta i - i + 1)\Gamma(N(z - \theta) + \theta(i-1) - \lambda_i^+(N) + 1)}{\Gamma(N(z - \theta) + \theta(i - 1) - i +1)\Gamma(N(z - \theta) + \theta i - \lambda_i^+(N) + 1)}\right\}.
\end{gathered}
\end{equation}
For any fixed $i = 1, 2, \ldots$, by definition of a VK sequence and its boundary point, we have
\begin{equation}\label{finite:limits}
\begin{gathered}
\lim_{N\rightarrow\infty}{\frac{N(z - \theta) + \theta \lambda^+(N)'_i - (i-1)}{N(z - \theta) + \theta(i - 1) - (i-1)}} = 1 + \theta \beta_i^+/(z - \theta),\\
\lim_{N\rightarrow\infty}{\frac{\Gamma(N(z - \theta) + \theta i - i + 1)\Gamma(N(z - \theta) + \theta(i-1) - \lambda_i^+(N) + 1)}{\Gamma(N(z - \theta) + \theta(i - 1) - i +1)\Gamma(N(z - \theta) + \theta i - \lambda_i^+(N) + 1)}} = (1 - \alpha_i^+/(z - \theta))^{-\theta},
\end{gathered}
\end{equation}
so if $d(\lambda^+(N))$ remained bounded as $N$ goes to infinity, then $\gamma^+ = 0$, and the limits above prove the desired $(\ref{phi2limit})$. In general, $d(\lambda^+(N))$ grows to infinity with $N$, but since $|\lambda^+(N)| \geq d(\lambda^+(N))^2$ (the square with main diagonal of size $d(\lambda^+(N))$ is contained in the Young diagram of $\lambda^+(N)$) and $\lim_{N\rightarrow\infty}{\frac{|\lambda^+(N)|}{N}}$ exists, then $d(\lambda^+(N)) = O(\sqrt{N})$ and therefore $d(\lambda^+(N))$ does not grow too fast. In this general case, we need an additional argument to prove $(\ref{phi2limit})$, which shows how the factor $\exp(\theta\gamma^+/(z-\theta))$ appears.

We take the logarithm of $\phi_2^{(1)}(z; \lambda(N), \theta)$ and show that it converges to the logarithm of the right-hand side of $(\ref{phi2limit})$. For that, we express $\ln{\phi_2^{(1)}(z; \lambda(N), \theta)}$ as a power series on $(z - \theta)^{-1}$ and show that the coefficients converge to the corresponding coefficients of the power series of
\begin{equation}\label{taylorcoeff}
\begin{gathered}
 \frac{\theta \gamma^+}{z - \theta} + \sum_{i=1}^{\infty}{\left(\ln{\left(1 + \frac{\theta\beta_i^+}{(z - \theta)}\right)} - \theta\ln{\left(1 - \frac{\alpha_i^+}{(z - \theta)}\right)}\right)}\\
= \frac{\theta}{z - \theta}\left(\gamma^+ + \sum_{i=1}^{\infty}{(\alpha_i^+ + \beta_i^+)}\right) + \sum_{j=2}^{\infty}{\frac{(-1)^{j-1}\theta^j}{j(z - \theta)^j}\sum_{i=1}^{\infty}{(\beta_i^+)^j}} + \sum_{j=2}^{\infty}{\frac{\theta}{j(z - \theta)^j}\sum_{i=1}^{\infty}{(\alpha_i^+)^j}}.
\end{gathered}
\end{equation}

Note that we assumed that all absolute values of $\theta\beta_i^+/(z - \theta)$ and $\alpha_i^+/(z - \theta)$, $i \geq 1$, are less than $1$, in order to use the expansion $\ln{(1 + x)} = x - x^2/2 + \ldots$. However, there is no loss in generality, because $\sum_{i = 1}^{\infty}{\alpha_i^+} < \infty$ implies $|\alpha_i^+/(z - \theta)| < 1$ for large enough $i > N_0$, and similarly $|\theta\beta_i^+/(z - \theta)| < 1$ for large enough $i > N_0$.
Thus we can take the logarithms of the products starting from such large $N_0$ and prove convergence of the product $(\ref{afterhooks})$ with $\prod_{i=1}^{d(\lambda^+(N))}$ replaced by $\prod_{i=N_0}^{d(\lambda^+(N))}$ to the product $(\ref{phi2limit})$ with $\prod_{i=1}^{\infty}$ replaced by $\prod_{i = N_0}^{\infty}$, and then deal with the few remaining terms by using the limits $(\ref{finite:limits})$.
So we shall assume without loss of generality that $|\alpha_i^+/(z - \theta)|, |\theta \beta_i^+/(z - \theta)| < 1$ for all $i = 1, 2, \ldots$. When we take logarithms of the terms in $\phi_2^{(1)}(z; \lambda(N), \theta)$, we shall also make use of similar Taylor expansions and accordingly we need to assume $|\theta (\lambda_i'(N))^+/(z - \theta)| < 1$ for all $i = 1, 2, \ldots$, as well as other similar relations; we always assume they hold, since there is no loss of generality, as we just argued.

Now we find the Taylor series coefficients of $\phi_2^{(1)}(z; \lambda(N), \theta)$, in the product form $(\ref{afterhooks})$, and show they converge to the Taylor series coefficients in $(\ref{taylorcoeff})$ above. For conciseness, we prove the limits of the coefficients of $(z - \theta)^{-1}$ and $(z - \theta)^{-2}$, and only sketch how to extend the analysis for higher powers $(z - \theta)^{-k}$. For the first product of $(\ref{afterhooks})$, we have
\begin{equation}\label{taylorseries1}
\begin{gathered}
\ln\left(\prod_{i=1}^{d(\lambda^+(N))}{\frac{N(z - \theta) + \theta \lambda^+(N)'_i - (i-1)}{N(z - \theta) + \theta(i - 1) - (i-1)}}\right) = \\
\sum_{i=1}^{d(\lambda^+(N))}{\ln\left( 1 + \frac{\theta \lambda^+(N)'_i - i + 1}{N(z - \theta)} \right) - \ln\left( 1 + \frac{\theta(i - 1) - i + 1}{N(z - \theta)} \right)} = \\
\frac{\theta}{z - \theta}\sum_{i=1}^{d(\lambda^+(N))}{\frac{b_i^+(\lambda(N)) + \frac{1}{2}}{N}} + \sum_{j=2}^{\infty}\frac{(-1)^{j-1}}{jN^j(z - \theta)^j}\sum_{i=1}^{d(\lambda^+(N))} \left( \theta \lambda^+(N)'_i + 1 - i \right)^j - \left( \theta(i - 1) + 1 - i \right)^j.
\end{gathered}
\end{equation}
For the second product of $(\ref{afterhooks})$, we use the asymptotic expansion of the log-Gamma function
\begin{equation*}
\ln{\Gamma(z)} = \left( \frac{\ln{2\pi}}{2} - z + (z - \frac{1}{2})\ln{z} \right)\left( 1 + O(1/z) \right), \hspace{.2in} |z| \rightarrow \infty, \ |\arg(z)| \leq \pi - \delta < \pi,
\end{equation*}
to obtain a Taylor series (on $(z - \theta)^{-1}$) for it. The first two terms of the Taylor expansion give
\begin{equation}\label{taylorseries2}
\begin{gathered}
\ln\left(\prod_{i=1}^{d(\lambda^+(N))}{\frac{\Gamma(N(z - \theta) + \theta i - i + 1)\Gamma(N(z - \theta) + \theta(i-1) - \lambda_i^+(N) + 1)}{\Gamma(N(z - \theta) + \theta(i - 1) - i +1)\Gamma(N(z - \theta) + \theta i - \lambda_i^+(N) + 1)}}\right)\\
= \frac{\theta}{(z - \theta)}\sum_{i=1}^{d(\lambda^+(N))}{\frac{a_i^+(\lambda(N)) - \frac{1}{2}}{N}}\\
+ \frac{\theta}{(z - \theta)^2}\sum_{i=1}^{d(\lambda^+(N))}{\frac{(2a_i^+(\lambda(N)) - 1)(2a_i^+(\lambda(N)) + 2\theta + 4i - 3 - 4\theta i)}{8N^2}} + O((z - \theta)^{-3}).
\end{gathered}
\end{equation}
Thus by combining $(\ref{taylorseries1})$ and $(\ref{taylorseries2})$, we have that the expansion of $\ln{\phi_2^{(1)}(z; \lambda(N), \theta)}$ in powers of $(z - \theta)^{-1}$, up to the second term, is
\begin{equation}\label{taylorcompare}
\frac{\theta}{z - \theta}\sum_{i=1}^{d(\lambda^+(N))}{ \frac{a_i^+(N) + b_i^+(N)}{N} } + \frac{1}{2(z - \theta)^2}\sum_{i=1}^{d(\lambda^+(N))}{\left( \frac{\theta a_i^+(\lambda(N))^2 - \theta^2 b_i^+(\lambda(N))^2}{N^2} + \textrm{ `other terms' } \right)}
\end{equation}
where `other terms' in $(\ref{taylorcompare})$ stands for a linear combination of sums
\begin{equation*}
\sum_{i=1}^{d(\lambda^+(N))}{ \frac{ia_i^+(\lambda(N))}{N^2} }, \ \sum_{i=1}^{d(\lambda^+(N))}{ \frac{a_i^+(\lambda(N))}{N^2} }, \ \sum_{i=1}^{d(\lambda^+(N))}{ \frac{i}{N^2} }, \textrm{ and } \sum_{i=1}^{d(\lambda^+(N))}{\frac{1}{N^2}}
\end{equation*}
(and similar ones with $b_i^+(\lambda(N))$ replacing $a_i^+(\lambda(N))$).

The coefficient of $(z - \theta)^{-1}$ in $(\ref{taylorcompare})$ converges, as $N\rightarrow\infty$, to the coefficient of $(z - \theta)^{-1}$ in $(\ref{taylorcoeff})$ because of the initial assumption that $\{\lambda(N)\}_{N\geq 1}$ is a VK sequence with boundary point $\omega = (\alpha^{\pm}, \beta^{\pm}, \gamma^{\pm})$.
Heuristically, the coefficient of $(z - \theta)^{-2}$ in $(\ref{taylorcompare})$ converges to the coefficient of $(z - \theta)^{-2}$ in $(\ref{taylorcoeff})$ because of $(\ref{claimfatou})$ below, but one also needs to prove that the sums bundled inside `other terms' converge to $0$, as $N$ tends to infinity.
Such convergence follows from the bound $d(\lambda^+(N)) = O(\sqrt{N})$.
In fact, we prove such convergence for the larger of those sums, namely $\sum_{i=1}^{d(\lambda^+(N))}{\frac{ia_i^+(\lambda(N))}{N^2}}$. We can easily bound
\begin{equation*}
0 \leq \sum_{i=1}^{d(\lambda^+(N))}{\frac{ia_i^+(\lambda(N))}{N^2}} \leq \frac{d(\lambda^+(\lambda(N)))}{N^2} \sum_{i=1}^{d(\lambda^+(N))}{a_i^+(\lambda(N))} \leq \frac{d(\lambda^+(\lambda(N)))}{N^2}|\lambda^+(N)|,
\end{equation*}
and we know that $\lim_{N\rightarrow\infty}{\frac{|\lambda^+(N)|}{N}} < \infty$, while $d(\lambda^+(\lambda(N))) = O(\sqrt{N})$, so $\frac{d(\lambda^+(\lambda(N)))}{N^2}|\lambda^+(N)|$ converges to $0$, as $N\rightarrow\infty$, as we wished.

For higher powers of $(z - \theta)^{-k}$, $k > 2$, the analysis is the similar by using $d(\lambda^+(N)) = O(\sqrt{N})$ and $(\ref{claimfatou})$.
The convergence of Taylor series coefficients is enough for the (pointwise) convergence that we desire because of the dominated convergence theorem and estimates above.
The proof of Lemma $\ref{limitphi}$ will be finished once we prove $(\ref{claimfatou})$ below.

\begin{equation}\label{claimfatou}
\begin{gathered}
\lim_{N\rightarrow\infty}{\sum_{i=1}^{d(\lambda^{\pm}(N))}{ \left( \frac{a_i^{\pm}(\lambda(N))^k}{N^k} \right) }} = \sum_{i=1}^{\infty}{(\alpha_i^{\pm})^k}, \ \forall k \geq 2;\\
\lim_{N\rightarrow\infty}{\sum_{i=1}^{d(\lambda^{\pm}(N))}{ \left( \frac{b_i^{\pm}(\lambda(N))^k}{N^k} \right) }} = \sum_{i=1}^{\infty}{(\beta_i^{\pm})^k}, \ \forall k \geq 2.
\end{gathered}
\end{equation}

Let us prove only $\lim_{N\rightarrow\infty}{\sum_{i=1}^{d(\lambda^+(N))}{ \left( \frac{a_i^+(\lambda(N))^k}{N^k} \right) }} = \sum_{i=1}^{\infty}{(\alpha_i^+)^k}$, since the other three limit relations are very similar.

We know $\lim_{N\rightarrow\infty}{\frac{a_i^+(\lambda(N))^k}{N^k}} = (\alpha_i^+)^k$, for all $i = 1, 2, \ldots$, so Fatou's lemma yields
\begin{equation}\label{fatou1}
\liminf_{N\rightarrow\infty}{\sum_{i=1}^{d(\lambda^+(N))}{ \left( \frac{a_i^+(\lambda(N))^k }{N^k} \right) }} \geq \sum_{i=1}^{\infty}{ (\alpha_i^+)^k }
\end{equation}

On the other hand, let $1 > \epsilon > 0$ be an arbitrary real number. We know $\sum_{i=1}^{\infty}{\alpha_i^+} \leq \sum_{i=1}^{\infty}{(\alpha_i^+ + \beta_i^+)} < \infty$, so there exists $N_1\in\N$ such that $\alpha_{N_1}^+ < \epsilon$. Since $\lim_{N\rightarrow\infty}{\frac{a^+_{N_1}(\lambda(N))}{N}} = \alpha_{N_1}^+$, there exists $N_0\in\N$ such that $\frac{a^+_{N_1}(\lambda(N))}{N} < \epsilon$ for all $N\geq N_0$, and therefore $\frac{a^+_i(\lambda(N))}{N} < \epsilon$ for all $N \geq N_0$, $i\geq N_1$. It follows that $\frac{a_i^+(\lambda(N))}{N}\epsilon^{k-1} - (\frac{a_i^+(\lambda(N))}{N})^k > 0$, for almost all $i$, and $N$ large enough. Fatou's lemma then yields
\begin{equation}\label{fatou2}
\begin{gathered}
\sum_{i=1}^{\infty}{\left( \alpha_i^+ \epsilon^{k-1} - (\alpha_i^+)^k \right)} \leq
\liminf_{N\rightarrow\infty}{\sum_{i=1}^{d(\lambda^+(N))}{\left( \frac{a_i^+(\lambda(N))}{N}\epsilon^{k-1} - (\frac{a_i^+(\lambda(N))}{N})^k \right) } }\\
\leq \liminf_{N\rightarrow\infty}{ \sum_{i=1}^{d(\lambda^+(N))}{\left( \frac{|\lambda^+(N)|}{N}\epsilon^{k-1} - (\frac{a_i^+(\lambda(N))}{N})^k \right) } }\\
= \delta^+ \epsilon^{k-1} - \limsup_{N\rightarrow\infty}{ \sum_{i=1}^{d(\lambda^+(N))}{ (\frac{a_i^+(\lambda(N))}{N})^k } }\\
= (\sum_{i=1}^{\infty}{(\alpha_i^+ + \beta_i^+)} + \gamma^+)\epsilon^{k-1} - \limsup_{N\rightarrow\infty}{\sum_{i=1}^{d(\lambda^+(N))}{ \left( \frac{a_i^+(\lambda(N))^k}{N^k} \right) }}
\end{gathered}
\end{equation}
By combining $(\ref{fatou1})$ and $(\ref{fatou2})$, we obtain
\begin{equation*}
\sum_{i=1}^{\infty}{(\alpha_i^+)^k}
\leq \liminf_{N\rightarrow\infty}{\sum_i{\frac{a_i^+(\lambda(N))^k}{N^k}}}
\leq \limsup_{N\rightarrow\infty}{\sum_i{\frac{a_i^+(\lambda(N))^k}{N^k}}}
\leq \sum_{i=1}^{\infty}{(\alpha_i^+)^k} + \epsilon^{k-1} \left( \gamma^+ + \sum_{i=1}^{\infty}{\beta_i^+} \right).
\end{equation*}
Since $\epsilon \in (0, 1)$ is arbitrary, $\liminf{\sum_{i}{\frac{a_i^+(\lambda(N))^k}{N^k}}} = \limsup{\sum_{i}{\frac{a_i^+(\lambda(N))^k}{N^k}}} = \sum_{i=1}^{\infty}{(\alpha_i^+)^k}$. The three other statements in $(\ref{claimfatou})$ are proved similarly.
Lemma $\ref{limitphi}$ is now proved.
\end{proof}

\subsection{Saddle-point method}\label{saddleapp1}

In the previous subsection, we worked towards proving Proposition $\ref{toproveclaim2}$, and we were left with studying the asymptotics of the integral in the second line of $(\ref{toapplysaddle})$, as $N\rightarrow\infty$.
Recall that the finite contour $\gamma(\epsilon_0)$ is given by $[-\epsilon_0, \epsilon_0] \rightarrow \C$, $s \mapsto z_0 + se^{\ii \phi}$ and $\phi$ is such that $w''(z_0)e^{2\ii\phi} < 0$, i.e., $\gamma$ is in the \textit{critical direction} near $z_0$.
We divide the task by studying separately the contributions of a $N^{-\epsilon}$-neighborhood of $z_0$ (for some $\epsilon > 0$) and the contribution of $[z_0 - \epsilon_0e^{\ii\phi}, z_0 + \epsilon_0e^{\ii\phi}] \setminus B(z_0, N^{-\epsilon})$.

Fix any positive real number $\frac{1}{2} > \epsilon > \frac{1}{3}$.

We first consider the small $N^{-\epsilon}$-neighborhood of $z_0$: the integral we estimate first is
\begin{equation}\label{integralsaddle}
\left(1 + O\left(\frac{1}{N}\right)\right)\times\frac{N^{1/2}\theta^{\theta N - 1/2}}{(e^y - 1)^{\theta N - 1}e^{\theta N}\sqrt{2\pi}\ii}\times\int_{\gamma(N^{-\epsilon})}{\exp(Nw(z))\sqrt{\frac{z - \theta}{z}}\widetilde{\Psi}(z; \omega, \theta)dz},
\end{equation}
where $\gamma(N^{-\epsilon}) \myeq \gamma \cap \{z\in\C : |z - z_0| \leq N^{-\epsilon}\} = [z_0 - N^{-\epsilon}e^{\ii \phi}, z_0 + N^{-\epsilon}e^{\ii \phi}]$. We work this out by closely following the saddle-point method, as outlined in \cite{C}.

When $N$ large enough, then $N^{-\epsilon}$ is very small, and $w(z) = yz - z\ln{z} + (z-\theta)\ln{(z - \theta)} + \theta$ is an analytic function on $B(z_0, 2N^{-\epsilon})$, so we have a Taylor series expansion
\begin{equation*}
w(z) = w(z_0) + a_2(z - z_0)^2 + O(N^{-3\epsilon}), \hspace{.2in} a_2 = \frac{w''(z_0)}{2}.
\end{equation*}
It follows that
\begin{equation}\label{estimate1app1}
\exp(Nw(z)) = \exp\left( Nw(z_0) + Na_2(z - z_0)^2 \right) \times (1 + O(N^{1 - 3\epsilon})).
\end{equation}

By a similar reasoning, we obtain
\begin{equation}\label{estimate2app1}
\begin{gathered}
\sqrt{\frac{z - \theta}{z}}\widetilde{\Psi}(z; \omega, \theta) = \sqrt{\frac{z_0 - \theta}{z_0}} \widetilde{\Psi}(z_0; \omega, \theta) + O(N^{-\epsilon}),\\
= \sqrt{\frac{z_0 - \theta}{z_0}} \widetilde{\Psi}(z_0; \omega, \theta) + o(N^{1-3\epsilon}), \ |z - z_0| \leq N^{-\epsilon}, \textrm{ as }N\rightarrow\infty.
\end{gathered}
\end{equation}
The latter equality holds by the condition that $1/3 < \epsilon < 1/2$.

We can easily calculate $w''(z) = \frac{\theta}{z(z - \theta)}$, so $a_2 = \frac{w''(z_0)}{2} = \frac{(e^y - 1)^2}{2\theta e^y}$. Let $A > 0$ and $\alpha\in[0, 2\pi]$ be defined by
\begin{equation*}
a_2 = \frac{(e^y - 1)^2}{2\theta e^y} =: Ae^{\ii \alpha}.
\end{equation*}
A parametrization of the contour $\gamma(N^{-\epsilon})$ is $z = z_0 + re^{\ii \phi}$, $r\in [N^{-\epsilon}, N^{\epsilon}]$.
With respect to this parametrization, $Na_2 (z - z_0)^2 = NAr^2e^{(\alpha + 2\phi)\ii}$, and thus $\phi = \pm\frac{\pi}{2} - \frac{\alpha}{2}$, because the contour is in the critical direction and therefore $\phi = \frac{\pi}{2} - \frac{\alpha}{2}$ because the contour is also positively oriented.
It follows that $Na_2(z - z_0)^2 = -NAr^2$ and $\exp(Na_2(z - z_0)^2) = \exp(- ANr^2)$.

On the other hand, $\exp(N w(z_0)) = (e^y - 1)^{N\theta}e^{N\theta}\theta^{-N\theta}$, $\sqrt{\frac{z_0 - \theta}{z_0}} = e^{-y/2}$, and from the definitions of $\Psi$ and $\widetilde{\Psi}$, we have $\widetilde{\Psi}(z_0; \omega, \theta) = \widetilde{\Psi}\left(\frac{\theta}{1 - e^{-y}}; \omega, \theta\right) = \Psi(e^y; \omega, \theta)$. Therefore, by combining the estimates $(\ref{estimate1app1})$ and $(\ref{estimate2app1})$, we have
\begin{equation}\label{estimate3app1}
\begin{gathered}
e^{Nw(z)}\sqrt{\frac{z - \theta}{z}}\widetilde{\Psi}(z; \omega, \theta) = (e^y - 1)^{N\theta}e^{N\theta}\theta^{-N\theta}e^{-\frac{y}{2}}\Psi(e^y; \omega, \theta)\exp(-ANr^2)\times (1 + O(N^{1 - 3\epsilon}))
\end{gathered}
\end{equation}
for all $|z - z_0| \leq N^{-\epsilon}$, and in particular for all $z\in\gamma(N^{-\epsilon})$. With estimate $(\ref{estimate3app1})$, the integral formula $(\ref{integralsaddle})$ is asymptotically equal to
\begin{equation}\label{secondsaddle}
\begin{gathered}
\frac{(e^y - 1)N^{1/2}e^{-y/2}\theta^{-1/2}}{\sqrt{2\pi}\ii}\Psi(e^y; \omega, \theta)\times\int_{-N^{-\epsilon}}^{N^{-\epsilon}}{\exp\left(-ANr^2 + \frac{(\pi - \alpha)}{2}\ii\right)dr}\times\left( 1 + O(N^{1 - 3\epsilon}) \right)\\
= \frac{(e^y - 1)N^{1/2}e^{-y/2}\theta^{-1/2}}{\sqrt{2\pi}\ii}\Psi(e^y; \omega, \theta)\times\frac{e^{(\pi - \alpha)\ii /2}}{\sqrt{AN}}\times\int_{-\sqrt{AN^{1 - 2\epsilon}}}^{\sqrt{AN^{1 - 2\epsilon}}}{\exp(-u^2)du}\times\left( 1 + O(N^{1 - 3\epsilon}) \right),
\end{gathered}
\end{equation}
where for the latter we made the change of variables $u = r\sqrt{AN}$.

The estimate
\begin{equation*}
\int_S^{\infty}{e^{-u^2}du} \leq \int_S^{\infty}{e^{-u^2}\left( \frac{2u^2 + 1}{u^2} \right)du} = \left.\left( -\frac{e^{-u^2}}{u} \right)\right|_{S}^{\infty} = e^{-S^2}/S
\end{equation*}
applied to $S = \sqrt{AN^{1 - 2\epsilon}}$ shows $\int_{\sqrt{AN^{1 - 2\epsilon}}}^{\infty}{e^{-u^2}du} = O(N^{\epsilon - \frac{1}{2}}e^{-AN^{1 - 2\epsilon}}) = o(N^{1 - 3\epsilon})$. By an analogous reasoning, $\int_{-\infty}^{-\sqrt{AN^{1 - 2\epsilon}}}{e^{-u^2}du} = o(N^{1 - 3\epsilon})$ and so we can replace the integral $\int_{-\sqrt{AN^{1 - 2\epsilon}}}^{\sqrt{AN^{1 - 2\epsilon}}}$ by $\int_{-\infty}^{\infty}$ in $(\ref{secondsaddle})$, and we can evaluate $\int_{-\infty}^{\infty}{e^{-u^2}du} = \sqrt{\pi}$. Finally we observe that
\begin{equation*}
\frac{e^{(\pi - \alpha)\ii/2}}{\sqrt{AN}} = \left( -\frac{1}{ANe^{\alpha \ii}} \right)^{1/2} = \left( -\frac{1}{Na_2} \right)^{1/2} = \frac{\ii}{e^y - 1}\left( \frac{2\theta e^y}{N} \right)^{1/2}.
\end{equation*}
Therefore $(\ref{secondsaddle})$ equals
\begin{equation*}
\begin{gathered}
\frac{(e^y - 1)N^{1/2}e^{-y/2}\theta^{-1/2}}{\sqrt{2\pi}\ii}\Psi(e^y; \omega, \theta)\frac{\ii}{e^y - 1}\left( \frac{2\theta e^y}{N} \right)^{1/2}\sqrt{\pi}\times (1 + O(N^{1 - 3\epsilon}))\\
= \Psi(e^y; \omega, \theta)\times(1 + O(N^{1 - 3\epsilon})).
\end{gathered}
\end{equation*}
As a result, the contribution of the neighborhood $\gamma(N^{-\epsilon})$, is $\Psi(e^y; \omega, \theta)\times(1 + O(N^{1 - 3\epsilon})) = \Psi(e^y; \omega, \theta)\times(1 + o(1))$.

Finally, we show that the contribution of $\gamma(\epsilon_0) \setminus \gamma(N^{-\epsilon}) = [z_0 - \epsilon_0e^{\ii\phi}, z_0 + \epsilon_0e^{\ii\phi}] \setminus B(z_0, N^{-\epsilon})$ to the asymptotics of the second line of $(\ref{toapplysaddle})$ is of order $o(1)$, thus concluding the proof of the theorem. Indeed the contribution of $\gamma(\epsilon_0) \setminus \gamma(N^{-\epsilon})$ is
\begin{equation}\label{contributionfar}
\frac{N^{1/2}\theta^{\theta N - 1/2}}{(e^y - 1)^{\theta N - 1}e^{\theta N}\sqrt{2\pi}\ii}\exp(Nw(z_0))\int_{\gamma(\epsilon_0) \setminus \gamma(N^{-\epsilon})} {\exp(N(w(z) - w(z_0)))\sqrt{\frac{z - \theta}{z}}\widetilde{\Psi}(z; \omega, \theta)dz}
\end{equation}
Since $\exp(Nw(z_0)) = (e^y - 1)^{N\theta}e^{N\theta}\theta^{-N\theta}$, then the factor $\frac{N^{1/2}\theta^{\theta N - 1/2}}{(e^y - 1)^{\theta N - 1}e^{\theta N}\sqrt{2\pi}\ii}\exp(Nw(z_0))$ is of order $O(N^{1/2})$.
On the other hand, the factor $\sqrt{\frac{z - \theta}{z}}\widetilde{\Psi}(z; \omega, \theta)$ is holomorphic on the $\epsilon_0$-neighborhood of $z_0$ and also $\Re w(z)$ decreases as $z$ ranges from $z_0$ to $z_0 \pm \epsilon_0 e^{\ii \phi}$, by the definition of $\phi$.
It follows that the absolute value of the integral in $(\ref{contributionfar})$ is upper bounded by a constant times $\exp(N(\Re w(z_0 \pm N^{-\epsilon}e^{\ii\phi}) - \Re w(z_0) )) \leq const\times \exp( - const\cdot N^{1 - 2\epsilon}) = o(N^{-1/2})$.
Therefore the expression $(\ref{contributionfar})$ is of order $o(1)$, as $N$ tends to infinity, proving the contribution outside an $N^{-\epsilon}$-neighborhood of $z_0$ is zero. Hence Proposition $\ref{toproveclaim2}$ is finally proved.

The conclusion of the present section is summarized in the following corollary. It is a consequence of Proposition $\ref{toproveclaim2}$, the analogous statement for $V_{\delta}'  := \{v\in\C : -\delta < \Re v < 0, \ 0 < \Im v < 2\pi\}$ which was left to the reader, and also Lemma $\ref{claimconvergence}$.

\begin{cor}\label{cor:m1}
Theorem $\ref{thm:application1}$ holds for $m = 1$. In other words, if $\{\lambda(N)\}_{N \geq 1}$, $\lambda(N) \in \GT_N$, is a VK sequence of signatures with boundary point $\omega\in\Omega$, then there exists $\delta_0 > 0$ small enough such that
\begin{equation*}
\lim_{N\rightarrow\infty}{J_{\lambda(N)}(z; N, \theta)} = \Psi(z; \omega, \theta)
\end{equation*}
uniformly on $\TT_{\delta_0}$. The function $\Psi$ was defined in $(\ref{Psidef})$.
\end{cor}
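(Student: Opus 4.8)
The plan is to assemble the corollary from three ingredients already in place: Proposition \ref{toproveclaim2}, the analogous statement for the region $V_\delta'$ (left to the reader in the text), and Lemma \ref{claimconvergence}. No new analytic input is needed; the argument is purely a matter of translating back through the substitution $z = e^y$ and checking that the hypotheses of Lemma \ref{claimconvergence} are met.

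First I would unwind Proposition \ref{toproveclaim2}. The exponential map $y \mapsto e^y$ restricted to the strip $V_\delta = \{v \in \C : 0 < \Re v < \delta,\ -\pi < \Im v < \pi\}$ is a homeomorphism onto $\{z \in \C : 1 < |z| < e^\delta\} \setminus (-\infty, 0]$, carrying compact sets to compact sets in both directions (the inverse being the branch of the logarithm with argument in $(-\pi,\pi)$). Hence, writing $z = e^y$, Proposition \ref{toproveclaim2} says exactly that $\lim_{N\to\infty} J_{\lambda(N)}(z; N, \theta) = \Psi(z; \omega, \theta)$ uniformly on compact subsets of $\{z : 1 < |z| < 1 + \epsilon_1\} \setminus (-\infty, 0]$, where $\epsilon_1 := e^\delta - 1 > 0$. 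The same reasoning applied to the analogous statement on $V_\delta' = \{v : -\delta < \Re v < 0,\ 0 < \Im v < 2\pi\}$, whose image under $y \mapsto e^y$ is $\{z : e^{-\delta} < |z| < 1\} \setminus [0,\infty)$, yields uniform convergence on compact subsets of $\{z : 1 - \epsilon_2 < |z| < 1\} \setminus [0,\infty)$ with $\epsilon_2 := 1 - e^{-\delta} > 0$.

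Then I would set $\epsilon := \min\{\epsilon_1,\epsilon_2,\tfrac{1}{3}\}$, so that $0 < \epsilon < \tfrac{1}{2}$ and the two convergence statements above hold, a fortiori, on compact subsets of $\{z : 1 < |z| < 1 + \epsilon\} \setminus (-\infty, 0]$ and of $\{z : 1 - \epsilon < |z| < 1\} \setminus [0,\infty)$ respectively. These are precisely the hypotheses of Lemma \ref{claimconvergence}, which therefore gives uniform convergence of $J_{\lambda(N)}(\,\cdot\,;N,\theta)$ to $\Psi(\,\cdot\,;\omega,\theta)$ on $\TT_{\epsilon/2}$. Taking $\delta_0 := \epsilon/2$ (shrunk further if necessary so that $\Psi(\,\cdot\,;\omega,\theta)$, which is holomorphic in a neighborhood of $\TT$, is holomorphic on $\TT_{\delta_0}$) finishes the proof. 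There is no genuine obstacle here — the real work was carried out in Proposition \ref{toproveclaim2}, Lemma \ref{claimconvergence}, and Lemma \ref{limitphi} — the only point requiring mild care being the identification of the images of the two strips $V_\delta, V_\delta'$ under the exponential and the observation that "uniform on compact subsets" is preserved under this homeomorphic change of variables.
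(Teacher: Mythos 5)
Your proposal is correct and follows essentially the same route as the paper: the paper itself states that Corollary \ref{cor:m1} is a consequence of Proposition \ref{toproveclaim2}, its analogue for $V_{\delta}'$, and Lemma \ref{claimconvergence}, which is exactly the assembly you carry out, with the change of variables $z=e^{y}$ and the choice $\epsilon<\tfrac12$ made explicit. No gaps.
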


\section{General number $m$ of variables: proof of Theorem $\ref{thm:application1}$}\label{sec:proofthm}

In this section, let $\{\lambda(N)\}_{N \geq 1}$ be a VK sequence of signatures with boundary point $\omega = (\alpha^{\pm}, \beta^{\pm}, \gamma^{\pm})$. Because of Section $\ref{sec:casem1}$, see Corollary $\ref{cor:m1}$, we have that Theorem $\ref{thm:application1}$ holds for $m = 1$, i.e., there exists some small $0 < \delta_0 < 1$ such that $\lim_{N\rightarrow\infty}{J_{\lambda(N)}(z; N, \theta)} = \Psi(z; \omega, \theta)$ holds uniformly on $\TT_{\delta_0}$. Let us fix such $\delta_0 \in (0, 1)$.

\subsection{Boundedness of Jack characters}

Let $m\in\N$, $m \geq 2$, be arbitrary.

\begin{lem}\label{lem:boundednessjacks}
Let $\delta \in (0, 1)$ be any constant such that
\begin{equation*}
1 - \delta_0 < \max\{(1 - \delta)^m, \ (1 + \delta)^{-m}\} < 1 < \min\{(1 + \delta)^m, \ (1 - \delta)^{-m}\} < 1 + \delta_0.
\end{equation*}
Then
\begin{equation*}
\sup_{N \geq m}{\sup_{(z_1, \dots, z_m)\in\TT^m_{\delta}}{|J_{\lambda(N)}(z_1, \ldots, z_m; N, \theta)|}} < \infty.
\end{equation*}
\end{lem}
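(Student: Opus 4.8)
The plan is to reduce the boundedness of Jack characters of $m$ variables to the already-established $m=1$ case, using the branching rule for Jack polynomials (Theorem~\ref{branchingjacks}) to control the absolute value of a Jack character on $\TT_\delta^m$ by its value at a single real point. First I would record the positivity observation: since the branching coefficients $\psi_{\lambda/\mu}(\theta)$ are nonnegative for $\theta>0$, iterating the branching rule expresses $J_\nu(x_1,\dots,x_m;N,\theta)$ as a sum, with nonnegative coefficients, of monomials $x_1^{a_1}\cdots x_m^{a_m}$ times Jack characters of fewer variables evaluated at $1$; more directly, $J_\nu(x_1,\dots,x_m;N,\theta)$ viewed as a Laurent polynomial in $x_1,\dots,x_m$ has nonnegative coefficients after normalizing by $J_\nu(1^N;\theta)$. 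Consequently, for any $z_1,\dots,z_m$ on the unit circle, or more generally off it,
\begin{equation*}
|J_{\lambda(N)}(z_1,\dots,z_m;N,\theta)| \leq J_{\lambda(N)}(|z_1|,\dots,|z_m|;N,\theta).
\end{equation*}

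Next I would use monotonicity in each real variable: because all coefficients of the normalized Jack character are nonnegative and it is a Laurent polynomial, for $r_i\in(0,\infty)$ the quantity $J_{\lambda(N)}(r_1,\dots,r_m;N,\theta)$ is bounded above by its value when each $r_i$ is replaced by the larger of $r_i$ and $r_i^{-1}$ (a Laurent monomial $r_1^{a_1}\cdots r_m^{a_m}$ is at most $\prod_i \max\{r_i,r_i^{-1}\}^{|a_i|}$, but one must be a little careful since the exponents $a_i$ can be negative; the clean way is to pass to $\nu+M$ with $M$ large so all exponents are nonnegative, use the index-stability $(\ref{indexstabilityjack})$, and note the monomial factor $(z_1\cdots z_m)^{-M}$ contributes a bounded factor on $\TT_\delta^m$). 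Thus for $(z_1,\dots,z_m)\in\TT_\delta^m$ we get
\begin{equation*}
|J_{\lambda(N)}(z_1,\dots,z_m;N,\theta)| \leq C\cdot J_{\lambda(N)}\big(\rho,\dots,\rho;N,\theta\big),
\end{equation*}
where $\rho=\max\{(1-\delta)^{-1},\,1+\delta\}$ up to the harmless rescaling, and $C$ absorbs the monomial prefactor. The point of the hypothesis on $\delta$ relating it to $\delta_0$ is precisely that $\rho$, and more to the point the single-variable specialization that actually appears, lands inside the annulus $\{1-\delta_0<|w|<1+\delta_0\}$ where the $m=1$ convergence holds.

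The final step is to invoke the $m=1$ case (Corollary~\ref{cor:m1}, applied within the section's standing hypothesis, or equivalently the paragraph fixing $\delta_0$): since $\{J_{\lambda(N)}(w;N,\theta)\}_{N\geq 1}$ converges as $N\to\infty$ for each $w\in\TT_{\delta_0}$ (indeed uniformly on $\TT_{\delta_0}$), in particular for the finitely many real points arising above, each such sequence is bounded in $N$; together with the fact that only finitely many $N<m$ need separate (trivial) attention, this yields a uniform bound over all $N\geq m$ and all $(z_1,\dots,z_m)\in\TT_\delta^m$, proving the lemma. I expect the only genuinely delicate point to be the bookkeeping in the monotonicity step — handling possibly negative exponents cleanly via the shift $\nu\mapsto\nu+M$ and $(\ref{indexstabilityjack})$, and checking that the resulting single real evaluation point really does lie in the annulus $\TT_{\delta_0}$ under the stated inequality on $\delta$; everything else is soft, relying only on nonnegativity of branching coefficients and the already-proved one-variable asymptotics.
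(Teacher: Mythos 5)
Your opening reductions (nonnegativity of the branching coefficients, hence $|J_{\lambda(N)}(z_1,\dots,z_m;N,\theta)|\le J_{\lambda(N)}(|z_1|,\dots,|z_m|;N,\theta)$, reducing the problem to real points of $[1-\delta,1+\delta]^m$) coincide with the paper's. But the core of your argument has a genuine gap: your monotonicity step terminates in the bound $C\cdot J_{\lambda(N)}(\rho,\dots,\rho;N,\theta)$, which is still an $m$-variable Jack character evaluated off the unit torus. Corollary \ref{cor:m1} only controls the one-variable characters $J_{\lambda(N)}(w;N,\theta)$, i.e.\ the specialization with all but one of the $N$ arguments equal to $1$; it says nothing about $J_{\lambda(N)}(\rho,\dots,\rho;N,\theta)$, and bounding that quantity uniformly in $N$ is essentially the content of the lemma itself at one particular real point. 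The ``single-variable specialization that actually appears'' is asserted but never produced. The missing ingredient is exactly the paper's key step: write $J_{\lambda(N)}(x_1,\dots,x_m;N,\theta)=\sum_{k\in\Z^m} c_{\lambda(N),k}(\theta)\,x_1^{k_1}\cdots x_m^{k_m}$ with $c_{\lambda(N),k}\ge 0$, use the elementary inequality $x_1^{k_1}\cdots x_m^{k_m}\le \sum_{\epsilon\in\{\pm1\}^m}\sum_{j=1}^m \bigl(x_1^{\epsilon_1}\cdots x_m^{\epsilon_m}\bigr)^{k_j}$, and observe that summing against the coefficients (and recalling that setting all but one variable equal to $1$ in the expansion yields the one-variable character) gives $J_{\lambda(N)}(x_1,\dots,x_m;N,\theta)\le m\sum_{\epsilon}J_{\lambda(N)}(x_1^{\epsilon_1}\cdots x_m^{\epsilon_m};N,\theta)$, where each argument $x_1^{\epsilon_1}\cdots x_m^{\epsilon_m}$ lies in $(1-\delta_0,1+\delta_0)$ precisely because of the hypothesis on $\delta$; only at this point does the $m=1$ result apply.

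A second problem is the shift $\nu\mapsto\nu+M$ you use to make monotonicity legitimate. To render all exponents nonnegative you need $M\ge-\lambda_N(N)$, and for a general VK sequence this forces $M$ to grow linearly in $N$. The prefactors do not cancel: your chain gives $J_{\lambda(N)}(r_1,\dots,r_m;N,\theta)\le\bigl(\rho^m/(r_1\cdots r_m)\bigr)^{M}J_{\lambda(N)}(\rho,\dots,\rho;N,\theta)$, and with $r_i$ as small as $1-\delta$ and $\rho=1+\delta$ this factor is of order $\bigl((1+\delta)/(1-\delta)\bigr)^{mM}$, exponentially large in $N$. So the ``bounded factor''/``harmless rescaling'' is not uniform in $N$, which is what the lemma demands. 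The paper's sign-pattern inequality avoids the shift entirely by working directly with Laurent monomials with possibly negative exponents.
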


\begin{proof}
{\bf Step 1.}
We begin with the following inequality: for any $k_1, \ldots, k_m\in\Z$ and positive reals $x_1, \ldots, x_m > 0$, we have
\begin{equation}\label{lemmaclaim1}
\begin{gathered}
x_1^{k_1}\cdots x_m^{k_m} \leq \max(x_1, 1/x_1)^{\max_i{|k_i|}}\cdots \max(x_m, 1/x_m)^{\max_i{|k_i|}}\\
\leq \sum_{\epsilon_1, \ldots, \epsilon_m \in\{\pm 1\}}{(x_1\cdots x_m)^{\max_i{|k_i|}}}
\leq \sum_{\epsilon_1, \ldots, \epsilon_m \in\{\pm 1\}} \sum_{j=1}^m{ (x_1\cdots x_m)^{k_j}}.
\end{gathered}
\end{equation}

{\bf Step 2.} From the fact that all branching coefficients $\psi_{\mu/\nu}(\theta)$ and the value $J_{\lambda(N)}(1^N; \theta)$ are nonnegative, see Theorems $\ref{evaluationjacks}$, $\ref{branchingjacks}$, and the triangle inequality we have
\begin{equation*}
\left| J_{\lambda(N)}(z_1, \ldots, z_m; N, \theta) \right| \leq J_{\lambda(N)}(|z_1|, \ldots, |z_m|; N, \theta).
\end{equation*}
Consequently the lemma is reduced to proving the bound
\begin{equation*}
\sup_{N \geq m}\sup_{(x_1, \ldots, x_m)\in I_{\delta}^m}{J_{\lambda(N)}(x_1, \ldots, x_m; N, \theta)} < \infty,
\end{equation*}
where $I_{\delta} \myeq [1 - \delta, 1 + \delta]\subset\R$.\\

{\bf Step 3.} Write
\begin{equation}\label{eqn:expansionJacks}
J_{\lambda(N)}(x_1, \ldots, x_m; N, \theta) = \sum_{k = (k_1, \ldots, k_m)\in\Z^m}{c_{\lambda(N), k}(\theta)\cdot x_1^{k_1}\cdots x_m^{k_m}},
\end{equation}
so each $c_{\lambda(N), k} \geq 0$ because of Theorems $\ref{evaluationjacks}$, $\ref{branchingjacks}$. Note that the coefficients $c_{\lambda(N), k}$ do not depend on $x_1, \ldots, x_m$, in particular we can set $x_1 = \ldots = x_{j-1} = x_{j+1} = \ldots = x_m = 1$ and get the Jack character of one variable
\begin{equation*}
J_{\lambda(N)}(x_j; N, \theta) = \sum_{k = (k_1, \ldots, k_m)\in\Z^m}{c_{\lambda(N), k}(\theta)\cdot x_j^{k_j}},
\end{equation*}
for any $1\leq j\leq m$. Next we apply $(\ref{lemmaclaim1})$ to the right side of $(\ref{eqn:expansionJacks})$: for any $x_1, x_2, \ldots, x_m\in I_{\delta}$, we obtain
\begin{equation*}
\begin{gathered}
J_{\lambda(N)}(x_1, \ldots, x_m; N, \theta) \leq \sum_{\epsilon_1, \ldots, \epsilon_m\in\{\pm 1\}}{ \sum_{j=1}^m{ \sum_{k = (k_1, \ldots, k_m)\in\Z^m}{c_{\lambda(N), k}(\theta)\cdot (x_1^{\epsilon_1}\cdots x_m^{\epsilon_m})^{k_j}} } }\\
= \sum_{\epsilon_1, \ldots, \epsilon_m\in\{\pm 1\}}{ \sum_{j=1}^m{ J_{\lambda(N)}(x_1^{\epsilon_1}\cdots x_m^{\epsilon_m}; N, \theta) } }\\
= m \cdot \sum_{\epsilon_1, \ldots, \epsilon_m\in\{\pm 1\}}{J_{\lambda(N)}(x_1^{\epsilon_1}x_2^{\epsilon_2}\cdots x_m^{\epsilon_m}; N, \theta)}.
\end{gathered}
\end{equation*}
Observe that if $x_1, \ldots, x_m\in I_{\delta}$ and $\epsilon_1, \ldots, \epsilon_m\in\{\pm 1\}$, then
\begin{equation}\label{smallbound}
1 - \delta_0 < \min\{(1-\delta)^m, (1 + \delta)^{-m}\} < x_1^{\epsilon}\cdots x_m^{\epsilon_m} < \max\{(1 + \delta)^m, (1 - \delta)^{-m}\} < 1 + \delta_0.
\end{equation}
Let $\delta(min) := \min\{(1-\delta)^m, (1 + \delta)^{-m}\}$, $\delta(max) := \max\{(1 + \delta)^m, (1 - \delta)^{-m}\}$, so $[\delta(min), \delta(max)] \subset (1 - \delta_0, 1 + \delta_0)$. Combining $(\ref{smallbound})$ with the estimates before gives
\begin{equation*}
\sup_{N \geq m}\sup_{(x_1, \ldots, x_m)\in I_{\delta}}{J_{\lambda(N)}(x_1, \ldots, x_m; N, \theta)} \leq m \cdot 2^m\cdot\sup_{N \geq m}\sup_{x\in [\delta(min), \delta(max)]}{J_{\lambda(N)}(x; N, \theta)}
\end{equation*}
and the latter is finite because, by choice of $\delta_0$, the limit $\lim_{N\rightarrow\infty}{J_{\lambda(N)}(x; N, \theta)} = \Psi(x; \omega, \theta)$ holds uniformly on $\TT_{\delta_0} \supset [\delta(min), \delta(max)]$.
\end{proof}

\begin{cor}\label{cor:subsequential}
Let $\delta > 0$ be any real number such that
\begin{equation*}
1 - \delta_0 < \max\{(1 - \delta)^m, \ (1 + \delta)^{-m}\} < 1 < \min\{(1 + \delta)^m, \ (1 - \delta)^{-m}\} < 1 + \delta_0.
\end{equation*}
For any sequence $\{N_j\}_{j\geq 1} \subset \{m, m+1, m+2, \dots\}$, there exists a subsequence $\{N_{k(n)}\}_{n \geq 1} \subseteq \{N_j\}_{j\geq 1}$ such that
\begin{equation*}
\left\{J_{\lambda(N_{k(n)})}(z_1, \ldots, z_m; N_{k(n)}, \theta)\right\}_{n\geq 1}
\end{equation*}
converges uniformly on $\TT_{\delta}^m$, as $n$ goes to infinity, to a holomorphic function on $\TT_{\delta}^m$.
\end{cor}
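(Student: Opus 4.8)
The proof is a direct application of Montel's theorem, fed by the uniform bound of Lemma \ref{lem:boundednessjacks}, together with one preliminary observation: the hypothesis imposed on $\delta$ is an \emph{open} condition. Indeed, the inequalities
\[
1 - \delta_0 < \max\{(1 - \delta)^m, \ (1 + \delta)^{-m}\} < 1 < \min\{(1 + \delta)^m, \ (1 - \delta)^{-m}\} < 1 + \delta_0
\]
are strict inequalities between continuous functions of $\delta$, so the set of admissible $\delta$ is open in $(0, \infty)$; hence, given an admissible $\delta$, we may fix an admissible $\delta' > \delta$.

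The plan is then the following. First, apply Lemma \ref{lem:boundednessjacks} with $\delta'$ in place of $\delta$ to obtain
\[
\sup_{N \geq m}\ \sup_{(z_1, \dots, z_m)\in\TT^m_{\delta'}}\left|J_{\lambda(N)}(z_1, \ldots, z_m; N, \theta)\right| < \infty .
\]
Second, observe that for each $N\geq m$ the function $J_{\lambda(N)}(z_1, \ldots, z_m; N, \theta)$ is a symmetric Laurent polynomial in $z_1, \ldots, z_m$, hence holomorphic on the domain $\TT_{\delta'}^m\subset\C^m$. Thus $\{J_{\lambda(N)}(\,\cdot\,; N, \theta) : N\geq m\}$ is a uniformly (in particular locally) bounded family of holomorphic functions on $\TT_{\delta'}^m$, and the several-variable version of Montel's theorem shows it is a normal family: from any $\{N_j\}_{j\geq 1}\subset\{m, m+1, m+2, \dots\}$ one extracts a subsequence $\{N_{k(n)}\}_{n\geq 1}$ along which $J_{\lambda(N_{k(n)})}(\,\cdot\,; N_{k(n)}, \theta)$ converges uniformly on compact subsets of $\TT_{\delta'}^m$, the limit being holomorphic on $\TT_{\delta'}^m$ by Weierstrass' theorem.

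Finally, since $\overline{\TT_\delta} = \{z\in\C : 1 - \delta\leq |z|\leq 1 + \delta\}$ is compact and contained in $\TT_{\delta'}$, the polyannulus $\overline{\TT_\delta}^m$ is a compact subset of $\TT_{\delta'}^m$; hence the extracted subsequence converges uniformly on $\overline{\TT_\delta}^m$, and therefore uniformly on $\TT_\delta^m$, to a function holomorphic on $\TT_\delta^m$, which is the assertion to be proved. I do not anticipate any genuine obstacle here: the only step requiring attention is upgrading the \emph{locally} uniform convergence produced by Montel's theorem to uniform convergence on the (open, non-compact) polyannulus $\TT_\delta^m$, and this is precisely what the small enlargement from $\delta$ to $\delta'$ is designed to handle.
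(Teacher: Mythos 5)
Your proof is correct and takes essentially the same route as the paper: the uniform bound of Lemma \ref{lem:boundednessjacks} followed by Montel's theorem (in several variables) to extract a subsequence converging to a holomorphic limit. The only difference is that you explicitly enlarge $\delta$ to an admissible $\delta' > \delta$ in order to upgrade the locally uniform convergence given by Montel to uniform convergence on the open polyannulus $\TT_{\delta}^m$, a detail the paper leaves implicit.
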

\begin{proof}
From Lemma $\ref{lem:boundednessjacks}$, it follows that the sequence $\left\{ J_{\lambda(N)}(z_1, \ldots, z_m; N, \theta) : N \geq m \right\}$ is uniformly bounded on $\TT_{\delta}^m$.
The result then follows as an application of Montel's theorem.
\end{proof}

\subsection{Pointwise convergence via Pieri integral formula}\label{sec:convergencepieri}

For each $m\in\N$, choose any real number $\delta_m \in (0, 1)$ such that 
\begin{equation*}
1 - \delta_0 < \max\{(1 - \delta_m)^m, \ (1 + \delta_m)^{-m}\} < 1 < \min\{(1 + \delta_m)^m, \ (1 - \delta_m)^{-m}\} < 1 + \delta_0.
\end{equation*}
Let us also make the choice of $\delta_1, \delta_2, \delta_3, \dots$ be such that
\begin{equation*}
\delta_0 > \delta_1 > \delta_2 > \dots > 0.
\end{equation*}
The choice of $\delta_m$ lets us use Lemma $\ref{lem:boundednessjacks}$ above. The next statement and its proof are key to our approach for the study of asymptotics of Jack characters. We believe the approach is robust enough to carry over to other limit regimes.

\begin{prop}\label{prop:pointwiseconvergence}
Let $x_1, x_2, \ldots, x_m$ be real numbers which satisfy
\begin{equation*}
\begin{gathered}
1 > x_m > \ldots > x_2 > x_1 > (1 - \delta_m)^{1/2},\\
x_3 > x_1/x_2, \ x_4 > x_2/x_3,\cdots,\ x_m > x_{m-2} / x_{m-1}.
\end{gathered}
\end{equation*}
Then the following pointwise limit holds
\begin{equation*}
\lim_{N\rightarrow\infty}{J_{\lambda(N)}(x_1, \ldots, x_m; N, \theta)} = \prod_{i = 1}^m{\Psi(x_i; \omega, \theta)}.
\end{equation*}
\end{prop}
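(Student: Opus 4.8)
The plan is induction on $m$. The base case $m=1$ is exactly Corollary $\ref{cor:m1}$, since $x_1>(1-\delta_1)^{1/2}>1-\delta_1>1-\delta_0$ places $x_1$ inside $\TT_{\delta_0}$. For the inductive step, fix $x_1<x_2<\dots<x_m$ as in the statement and set $x:=x_m$. The hypotheses give $1>x>x_{m-1}>\dots>x_1>0$ and, as the sequence is increasing, $x=x_m\ge x_{i+2}>x_i/x_{i+1}$ for $1\le i\le m-2$, so Theorem $\ref{thm:pieri}$ applies with its parameter ``$m$'' replaced by $m-1$ and with ``$x$''$=x_m$, giving
\begin{equation*}
J_{\lambda(N)}(x_1,\dots,x_{m-1};N,\theta)\,J_{\lambda(N)}(x_m;N,\theta)=\int_{\U_{x_m}} J_{\lambda(N)}\!\left(x_1w_1,\dots,x_{m-1}w_{m-1},\tfrac{x_m}{w_1\cdots w_{m-1}};N,\theta\right)d\mu_N(w),
\end{equation*}
where $\mu_N$ has density on $\U_{x_m}\subset\R^{m-1}$ proportional to $G\cdot F^{\theta(N-m+1)-1}\prod_{i=1}^{m-1}(1-w_i)^{\theta-1}$ with respect to $\frac{dw_1}{w_1}\cdots\frac{dw_{m-1}}{w_{m-1}}$, with $G,F$ as in $(\ref{defn:FGfns})$ (with $m$ replaced by $m-1$, $x:=x_m$). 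The stated inequalities force every factor of $G$ and $F$ to be positive on $\U_{x_m}$, so this is a genuine measure (also using that branching coefficients are $\ge 0$, Theorems $\ref{evaluationjacks}$, $\ref{branchingjacks}$); specializing $\nu$ to the zero signature in $(\ref{eqn:pieri})$, where every Jack character equals $1$, shows it has total mass $1$, hence is a probability measure.

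By the inductive hypothesis applied to $(x_1,\dots,x_{m-1})$ — whose entries still satisfy the required inequalities since $\delta_{m-1}>\delta_m$, so $x_1>(1-\delta_m)^{1/2}>(1-\delta_{m-1})^{1/2}$ — together with the base case, the left side converges to $\prod_{i=1}^{m-1}\Psi(x_i;\omega,\theta)\cdot\Psi(x_m;\omega,\theta)=\prod_{i=1}^{m}\Psi(x_i;\omega,\theta)$. On the right side, the integrand at $w=\mathbf{1}:=(1,\dots,1)$ is exactly $J_{\lambda(N)}(x_1,\dots,x_m;N,\theta)$, the quantity we want. Write $f_N(w)$ for the integrand. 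The smooth map $w\mapsto(x_1w_1,\dots,x_{m-1}w_{m-1},x_m/(w_1\cdots w_{m-1}))$ sends the compact set $\U_{x_m}$ into a compact subset of $\TT_{\delta_m}^m$ — indeed on $\U_{x_m}$ one has $w_j\ge x_m$ for all $j$, so $x_jw_j\ge x_jx_m>1-\delta_m$ and $x_m/(w_1\cdots w_{m-1})\ge x_m>(1-\delta_m)^{1/2}>1-\delta_m$, all these being $\le 1<1+\delta_m$ — so by Lemma $\ref{lem:boundednessjacks}$ the family $\{f_N\}_{N\ge m}$ is uniformly bounded, and by Montel's theorem (cf. Corollary $\ref{cor:subsequential}$) it is equicontinuous on $\U_{x_m}$, uniformly in $N$. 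Hence for every $\eta>0$ there is $\rho>0$ with $|f_N(w)-f_N(\mathbf{1})|<\eta$ for all $N$ and all $w\in\U_{x_m}$, $|w-\mathbf{1}|<\rho$, so
\begin{equation*}
\left|\int_{\U_{x_m}}f_N\,d\mu_N-J_{\lambda(N)}(x_1,\dots,x_m;N,\theta)\right|\le\eta+2\Bigl(\sup_{N,\,w}|f_N(w)|\Bigr)\,\mu_N\bigl(\U_{x_m}\setminus B(\mathbf{1},\rho)\bigr).
\end{equation*}
Once $\mu_N(\U_{x_m}\setminus B(\mathbf{1},\rho))\to 0$ is proved, letting $N\to\infty$ and then $\eta\to 0$ yields $J_{\lambda(N)}(x_1,\dots,x_m;N,\theta)\to\prod_{i=1}^m\Psi(x_i;\omega,\theta)$, which closes the induction.

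It remains to show the concentration $\mu_N\to\delta_{\mathbf{1}}$, which is the crux. This is a Laplace-type analysis: the only $N$-dependent ingredient of the density is $F(w)^{\theta(N-m+1)-1}$, while $F$ is continuous and positive on $\U_{x_m}$ with $F(\mathbf{1})=1$, and the key point is that $\mathbf{1}$ is the \emph{unique} maximizer of $F$ on $\U_{x_m}$. To see this, note $\U_{x_m}$ is convex and, with $w_m:=x_m/(w_1\cdots w_{m-1})$,
\begin{equation*}
-\log F(w)=\mathrm{const}+\sum_{k\ge 1}\frac{1}{k}\left(\frac{x_m^{\,k}}{(w_1\cdots w_{m-1})^{k}}+\sum_{j=1}^{m-1}x_j^{\,k}w_j^{\,k}\right),
\end{equation*}
a sum of convex functions (the first term is log-convex since $w\mapsto-\sum_j\log w_j$ is convex), so $-\log F$ is convex on $\U_{x_m}$; moreover $\partial_{w_j}(-\log F)(\mathbf{1})=\frac{x_j}{1-x_j}-\frac{x_m}{1-x_m}<0$ for each $j$ because $t\mapsto t/(1-t)$ is increasing and $x_j<x_m$. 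Since the feasible directions at the corner $\mathbf{1}$ all have nonpositive coordinates, convexity gives $-\log F(w')\ge -\log F(\mathbf{1})+\nabla(-\log F)(\mathbf{1})\cdot(w'-\mathbf{1})\ge -\log F(\mathbf{1})$ for all $w'\in\U_{x_m}$, with strict inequality unless $w'=\mathbf{1}$; thus $F<1$ off $\mathbf{1}$, and $F\le 1-c(\rho)$ on $\U_{x_m}\setminus B(\mathbf{1},\rho)$ for some $c(\rho)>0$. Therefore the integral of $G\,F^{\theta(N-m+1)-1}\prod_i(1-w_i)^{\theta-1}$ over $\U_{x_m}\setminus B(\mathbf{1},\rho)$ decays exponentially in $N$, while by the $\nu=0$ specialization the integral over all of $\U_{x_m}$ equals the reciprocal of the prefactor of $(\ref{eqn:pieri})$, which decays only polynomially ($\asymp N^{-(m-1)\theta}$ by Stirling); dividing gives $\mu_N(\U_{x_m}\setminus B(\mathbf{1},\rho))\to 0$. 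The main obstacle is precisely this concentration step — confirming positivity of $G$ and $F$ under the stated inequalities and identifying $\mathbf{1}$ as the unique maximum of $F$ on $\U_{x_m}$; the remaining ingredients are routine bookkeeping with the inductive hypothesis, Montel's theorem, and Stirling's formula.
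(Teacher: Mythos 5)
Your proposal is correct, and it shares the paper's skeleton (induction on $m$, the Pieri integral formula of Theorem \ref{thm:pieri}, uniform boundedness from Lemma \ref{lem:boundednessjacks}, and concentration of the $w$-integration at $\mathbf{1}$), but the key analytic step is executed by a genuinely different, softer route. The paper proves concentration quantitatively: it splits the domain into an $N^{-\epsilon}$-neighborhood $\U(N,\epsilon)$ of $\mathbf 1$ and its complement, bounds $F\le 1-cN^{-\epsilon}$ off the neighborhood using monotonicity of $F$ in each $w_i$, and then performs an explicit Laplace-type computation (Taylor expansions of $F$, $G$, and the Jack factor, plus the change of variables $w_i=1-\frac{(1-x_i)(1-x)}{\theta N(x-x_i)}y_i$) whose $\Gamma(\theta)^m$ output cancels the Pieri prefactor exactly, yielding the right side equal to $J_{\lambda(N)}(x_1,\dots,x_m,x;N,\theta)+o(1)$ with explicit error rates $O(N^{1-2\epsilon})$. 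You avoid all of this bookkeeping by two observations the paper does not use: (i) specializing $\nu$ to the zero signature in $(\ref{eqn:pieri})$ shows the prefactor times the $w$-density integrates to $1$, so the normalization comes for free and $\mu_N$ is a probability measure (this does require checking that every factor of $F$ and $G$ is positive on $\U_{x_m}$ under the stated inequalities, which indeed holds since each $w_j\ge x_m$ there, e.g. $x_ix_j^{-1}w_j^{-1}\le x_i/(x_jx_m)<1$); and (ii) a convexity argument ($-\log F$ is convex, its gradient at $\mathbf 1$ is strictly negative, and $\U_{x_m}$ lies in $\{w\le\mathbf 1\}$) identifying $\mathbf 1$ as the unique maximizer of $F$, after which exponential decay of $F^{\theta(N-m+1)-1}$ off a fixed $\rho$-neighborhood beats the merely polynomial growth $\asymp N^{(m-1)\theta}$ of the prefactor. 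Combined with uniform equicontinuity of the Jack factor (which, as you note, follows from uniform boundedness on $\TT_{\delta_m}^{m+?}$ via Cauchy estimates rather than from Montel's theorem per se — a cosmetic point), this gives the limit. The trade-off: your argument is shorter and conceptually closer to the "concentration of measure" picture announced in the introduction, while the paper's explicit expansion yields quantitative error bounds that a soft exponential-vs-polynomial comparison cannot provide (though none are needed for this proposition). I see no gap.
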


\begin{rem}\label{rem:applyPieri}
The inequalities in Proposition $\ref{prop:pointwiseconvergence}$ imply, for any $j = 3, 4, \dots, m$, that
\begin{equation*}
x_j > \max_{i = 1, \dots, j-2}{(x_i/x_{i+1})}.
\end{equation*}
Also for any $j = 1, 2, \dots, m$, our choice of the constants $\delta_1, \dots, \delta_m$ implies
\begin{equation*}
1 > x_j > \dots > x_1 > (1 - \delta_j)^{1/2}.
\end{equation*}
\end{rem}

\begin{rem}\label{rem:deltaremark}
Let $\delta \in (0, 1)$ be any real number such that
\begin{equation}\label{eqn:conditiondelta}
1 - \delta_0 < \max\{(1 - \delta)^m, \ (1 + \delta)^{-m}\} < 1 < \min\{(1 + \delta)^m, \ (1 - \delta)^{-m}\} < 1 + \delta_0.
\end{equation}
Also let $m\in\N$ be arbitrary. Then there exist real numbers $\delta_1, \dots, \delta_{m-1}, \delta_m = \delta$ satisfying the conditions at the beginning of Section $\ref{sec:convergencepieri}$. This implies that the conclusion of Proposition $\ref{prop:pointwiseconvergence}$ is satisfied if we replace $\delta_m$ by any $\delta \in (0, 1)$ satisfying $(\ref{eqn:conditiondelta})$ above. The reason for our wording of Proposition $\ref{prop:pointwiseconvergence}$ is so that the proof, which uses induction on $m$, is more transparent.
\end{rem}

\begin{proof}
The proof is by induction on $m$. The base case $m = 1$ follows from the much stronger analysis of Section $\ref{sec:casem1}$, which proves Theorem $\ref{thm:application1}$ for $m = 1$.

For the induction step, let us assume the statement holds for $m\in\N$ and let us next prove it for $(m+1)$. Let $x_1, x_2, \ldots, x_m, x$ be any real numbers satisfying
\begin{equation}\label{eqns:inequalities}
\begin{gathered}
1 > x > x_m > \ldots > x_2 > x_1 > (1 - \delta_{m+1})^{1/2},\\
x_3 > x_1/x_2,\ \cdots,\ x_m > x_{m-2}/x_{m-1},\ x > x_{m-1} / x_m.
\end{gathered}
\end{equation}
From the inductive hypothesis, the following pointwise limits hold
\begin{equation*}
\lim_{N\rightarrow\infty}{J_{\lambda(N)}(x_1, \ldots, x_m; N, \theta)} = \prod_{i = 1}^m{\Psi(x_i; \omega, \theta)}, \hspace{.2in}
\lim_{N\rightarrow\infty}{J_{\lambda(N)}(x; N, \theta)} = \Psi(x; \omega, \theta).
\end{equation*}
Next we apply the Pieri integral formula, Theorem $\ref{thm:pieri}$. We can do so because of our assumptions on the variables $x_1, \dots, x_m, x$, see also Remark $\ref{rem:applyPieri}$. The result is
\begin{equation}\label{pieribeforelimit}
\begin{gathered}
J_{\lambda(N)}(x_1, \ldots, x_m; N, \theta)J_{\lambda(N)}(x; N, \theta) = \frac{\Gamma(N\theta)x^{m\theta}}{\Gamma((N-m)\theta)\Gamma(\theta)^m(1-x)^{m\theta}\prod_{i=1}^m{(1-x_i)^{\theta}}}\\
\times\int\dots\int { G(x_1, \ldots, x_m, x; w_1, \ldots, w_m; \theta)\left( F(x_1, \ldots, x_m, x; w_1, \ldots, w_m) \right)^{\theta(N-m)-1}\prod_{i=1}^m{(1 - w_i)^{\theta-1}} }\\
{ J_{\lambda(N)}\left(x_1w_1, \ldots, x_mw_m, \frac{x}{w_1\cdots w_m}; N, \theta\right) \frac{dw_1}{w_1}\dots\frac{dw_m}{w_m} },
\end{gathered}
\end{equation}
where the functions $F, G$ are defined in $(\ref{defn:FGfns})$ and the domain of integration is the compact subset $\U_x = \{(w_1, \ldots, w_m) \in \R^m : 1 \geq w_1, w_2, \dots, w_m \geq 0, \ w_1w_2\cdots w_m \geq  x\}$.

Let us study the asymptotics of $(\ref{pieribeforelimit})$ as $N \rightarrow \infty$. The left side has a limit, as $N\rightarrow\infty$, because of the inductive hypothesis and that limit is $\Psi(x; \omega, \theta)\prod_{i=1}^m{\Psi(x_i; \omega, \theta)}$.

Let us move on to the right side of $(\ref{pieribeforelimit})$. For the first line, we use $\Gamma(x+a)/\Gamma(x+b) = x^{a-b}(1 + O(x^{-1}))$, $a, b \in \C$, $x\rightarrow\infty$. It easily follows that
\begin{equation}\label{eqn:firstline}
\textrm{first line of right side of }(\ref{pieribeforelimit}) = \frac{(N\theta)^{m \theta} x^{m \theta}}{\Gamma(\theta)^m(1 - x)^{m\theta}\prod_{i=1}^m{(1 - x_i)^{\theta}}}\cdot \left( 1 + O(N^{-1}) \right), \ N\rightarrow\infty.
\end{equation}
Now let us study the integral by dividing it into a union of two domains of integration. Let $1 > \epsilon > \frac{1}{2}$ be arbitrary and consider $N$ large enough so that $(1 - N^{-\epsilon})^m > x$. Then consider the small subdomain $\U(N, \epsilon) \subset \U_x$ around $(w_1, \ldots, w_m) = (1^m)$:
\begin{equation*}
\U(N, \epsilon) \myeq \{ (w_1, \ldots, w_m)\in\R^m : 1 \geq w_1, w_2, \dots, w_m \geq 1 - N^{-\epsilon} \}.
\end{equation*}
Let $H_N(x_1, \ldots, x_m, x; w_1, \ldots, w_m; \theta)$ be the integrand shown in the second and third lines of $(\ref{pieribeforelimit})$.
(We include $(w_1 \cdots w_m)^{-1}$, which appears in the third line of $(\ref{pieribeforelimit})$ in the form $\frac{dw_1}{w_1}\dots\frac{dw_m}{w_m}$, as a factor of $H_N$.)
We make the following two claims.

\begin{equation*}
\begin{gathered}
\textbf{Claim 1.} \hspace{.1in} \int\cdots\int{|H_N(x_1, \ldots, x_m, x; w_1, \ldots, w_m; \theta)|dw_1 \cdots dw_m} = o(N^{-m \theta}),\\ \textrm{ where the domain of integration is }\U_x \setminus \U(N, \epsilon).\\
\textbf{Claim 2.} \hspace{.1in} \int\cdots\int{H_N(x_1, \ldots, x_m, x; w_1, \ldots, w_m; \theta) dw_1 \cdots dw_m}\\
= (Nx\theta)^{-m \theta}\Gamma(\theta)^m(1 - x)^{m\theta}\prod_{i=1}^m{(1 - x_i)^{\theta}} \left( J_{\lambda(N)}(x_1, \ldots, x_m, x; N, \theta) + O(N^{-\epsilon}) \right) \cdot (1 + O(N^{1 - 2\epsilon})),\\
\textrm{ where the domain of integration is }\U(N, \epsilon).
\end{gathered}
\end{equation*}

Let us complete the inductive step from claims 1 and 2, and after that we give the proofs to the claims. Indeed, from both claims above and $(\ref{eqn:firstline})$, the right side of $(\ref{pieribeforelimit})$ equals
\begin{equation}\label{eqn:rightsidejacks}
\begin{gathered}
\left( J_{\lambda(N)}(x_1, \ldots, x_m, x; N, \theta) + O(N^{-\epsilon}) \right)\cdot (1 + O(N^{1 - 2\epsilon})) + o(1)\\
= J_{\lambda(N)}(x_1, \ldots, x_m, x; N, \theta) + J_{\lambda(N)}(x_1, \ldots, x_m, x; N, \theta)\cdot O(N^{1 - 2\epsilon}) + o(1), \ N \rightarrow\infty.
\end{gathered}
\end{equation}
From Lemma $\ref{lem:boundednessjacks}$, we have the bound
\begin{equation*}
\sup_{N \geq m+1}{|J_{\lambda(N)}(x_1, \ldots, x_m, x; N, \theta)|} \leq \sup_{N\geq m+1}\sup_{(z_1, \ldots, z_m, z)\in\TT_{\delta_0}^{m+1}}{\left| J_{\lambda(N)}(z_1, \ldots, z_m, z; N, \theta) \right|} < \infty,
\end{equation*}
and in particular $J_{\lambda(N)}(x_1, \ldots, x_m, x; N, \theta) \cdot O(N^{1 - 2\epsilon}) = o(1)$ as $N$ tends to infinity. 
Then, by $(\ref{eqn:rightsidejacks})$, the right side of $(\ref{pieribeforelimit})$ equals $J_{\lambda(N)}(x_1, \ldots, x_m, x; N, \theta) + o(1)$, as $N\rightarrow\infty$.

Since the left side of $(\ref{pieribeforelimit})$ converges to $\Psi(x; \omega, \theta)\prod_{i=1}^m{\Psi(x_i; \omega, \theta)}$ as $N$ goes to infinity, then the same must be true for the right side of $(\ref{pieribeforelimit})$. Therefore $\lim_{N\rightarrow\infty}{J_{\lambda(N)}(x_1, \ldots, x_m, x; N, \theta)}$ exists and equals $\Psi(x; \omega, \theta)\prod_{i=1}^m{\Psi(x_i; \omega, \theta)}$.

We finish with the proofs of the claims above. For simplicity, denote the vectors $\bfx = (x_1, \ldots, x_m, x)$, $\bfw = (w_1, \ldots, w_m)$, and write $\mathring{F}(\bfw) \myeq F(\bfx, \bfw)$, $\mathring{G}(\bfw) \myeq G(\bfx, \bfw; \theta)$, thus focusing on the variable $\bfw$ and treating $\bfx$ as a sequence of constant values.\\

\textit{Proof of Claim 1.}

We can simplify the claim quite a bit. In fact, observe that the function $\mathring{G}(\bfw) = G(\bfx, \bfw; \theta)$ defined in $(\ref{defn:FGfns})$ does not depend on $N$ and attains a maximum positive and finite value as $\bfw$ ranges over points of the compact set $\U_x$.

Also note that for any $\bfw\in\U_x$, we have $1 > x_i \geq x_iw_i \geq x_ix > x_1^2 > 1 - \delta_{m+1}$ and $1 \geq x(w_1\cdots w_m)^{-1} \geq x > (1 - \delta_{m+1})^{1/2} > 1 - \delta_{m+1}$. Thus, together with Lemma $\ref{lem:boundednessjacks}$, we obtain
\begin{equation*}
\begin{gathered}
\sup_{N \geq m+1}\sup_{\bfw\in\U_x}{\left|J_{\lambda(N)}\left( x_1w_1, \dots, x_mw_m, \frac{x}{w_1\cdots w_m}; N, \theta \right)\right|}\\
\leq \sup_{N \geq m+1}{\sup_{(z_1, \dots, z_{m+1})\in\mathbb{D}_m}{|J_{\lambda(N)}(z_1, \dots, z_{m+1}; N, \theta)|}} < \infty,
\end{gathered}
\end{equation*}
where we denoted $\mathbb{D}_m := \TT_{\delta_{m+1}}^{m+1}$.

Taking these estimates into account, the first claim will be proved if we show
\begin{equation}\label{eqn:lefttoprove}
\int\dots\int{|F(\bfx; \bfw)|^{\theta (N-m)-1}\prod_{i=1}^m{|1 - w_i|^{\theta - 1}}dw_1\dots dw_m} = o(N^{-m\theta}),
\end{equation}
where the domain of integration is $\U_x \setminus \U(N, \epsilon)$.

It is not hard to see that $F(\bfx; \bfw)$ is increasing in each of the variables $w_i$. In fact, one can easily see that $F(\bfx; \bfw) \geq 0, \ \forall \bfw\in\U_x$, and $\left(\partial_{w_i}F(\bfx; \bfw)\right) (F(\bfx; \bfw))^{-1} \geq 0, \ \forall \bfw\in\U_x, \ \forall i = 1, \dots, m$. Note that $\bfw\in \U_x \setminus \U(N, \epsilon)$ implies $w_i < 1 - N^{-\epsilon}$ for some $i = 1, 2, \ldots, m$. In the case that $w_i < 1 - N^{-\epsilon}$, we have
\begin{equation*}
\begin{gathered}
|F(\bfx; \bfw)| = F(\bfx; \bfw) \leq F\left(\bfx; (1, \dots, 1, 1 - N^{-\epsilon}, 1 \dots, 1)\right)\\
= (1 - x)^{-1}(1 - x_i)^{-1}(1 - x_iw_i)\left.\left(1 - \frac{x}{w_i} \right)\right|_{w_i = 1 - N^{-\epsilon}}\\
= ((1 - x_i)(1 - x))^{-1}(1 + x_ix  - x_i (1 - N^{-\epsilon}) - x(1 - N^{-\epsilon})^{-1})\\
\leq ((1 - x_i)(1 - x))^{-1}(1 + x_ix  - x_i (1 - N^{-\epsilon}) - x(1 + N^{-\epsilon})) = 1 - N^{-\epsilon}\frac{x - x_i}{(1 - x_i)(1 - x)}.
\end{gathered}
\end{equation*}

Thus the estimate above implies that $(\ref{eqn:lefttoprove})$ is implied if we prove
\begin{equation}\label{eqn:lefttoprove2}
\begin{gathered}
\left( 1 - N^{-\epsilon}\frac{x - x_i}{(1 - x_i)(1 - x)} \right)^{\theta (N - m) - 1} = o(N^{-m\theta}),\\
\int_x^1{(1 - w)^{\theta - 1}dw} = O(1).
\end{gathered}
\end{equation}

The second estimate in $(\ref{eqn:lefttoprove2})$ is quite clear and it is implied by the fact that $\int_0^1{x^{\theta - 1}dx}$ is a convergent integral, for any $\theta\in\C$ with $\Re\theta > 0$.

It remains to prove the first estimate of $(\ref{eqn:lefttoprove2})$. Begin with the Taylor series expansion
\begin{equation*}
\ln{\left( 1 - N^{-\epsilon}\frac{x - x_i}{(1 - x_i)(1 - x)} \right)} = - N^{-\epsilon}\frac{x - x_i}{(1 - x_i)(1 - x)} + O(N^{-2\epsilon}) = -N^{-\epsilon}c + O(N^{-2\epsilon}),
\end{equation*}
where we denoted $c = \frac{x - x_i}{(1 - x_i)(1 - x)}$ a positive constant independent of $N$. Then

\begin{equation}\label{lefttoprove2done}
\begin{gathered}
\left( 1 - N^{-\epsilon}\frac{x - x_i}{(1 - x_i)(1 - x)} \right)^{\theta (N - m) - 1}
= \exp\left((\theta (N - m) - 1) (- N^{-\epsilon}c + O(N^{-2\epsilon})) \right)\\
= \exp\left( -\theta c N^{1 - \epsilon} + O(N^{-\epsilon}) + O(N^{1 - 2\epsilon}) \right) = \exp\left(-\theta c N^{1-\epsilon} \right) \times (1 + o(1)),
\end{gathered}
\end{equation}
where we used that $1 > \epsilon > \frac{1}{2}$ implies $O(N^{-\epsilon}) + O(N^{1 - 2\epsilon}) = o(1)$. The estimate $(\ref{lefttoprove2done})$ shows the first line of $(\ref{eqn:lefttoprove2})$, in fact, we achieved a much stronger bound.\\

\textit{Proof of Claim 2.}

Let us do an approximation of the factor $\mathring{F}(\bfw)^{\theta N}$. A simple application of Taylor's series expansion around $\bfw = (1^m)$ gives
\begin{equation}\label{step1:claim2}
\ln\left(\mathring{F}(\bfw)\right) = \ln{\left(\mathring{F}(1^m)\right)} + \sum_{i=1}^m{\left.\frac{\partial_{w_i}\mathring{F}(\bfw)}{\mathring{F}(\bfw)}\right|_{\bfw = 1^m}(w_i - 1)} + O\left(\sum_{1\leq i,j \leq m}{|(w_i - 1)(w_j - 1)|} \right).
\end{equation}
From the formula $(\ref{defn:FGfns})$ of $F$, it is clear that
\begin{equation*}
\mathring{F}(1^m) = 1, \hspace{.1in} \frac{\partial_{w_i}\mathring{F}(\bfw)}{\mathring{F}(\bfw)} = -\frac{x_i}{1 - x_iw_i} + \frac{x}{w_i (w_1\cdots w_m - x)} \Rightarrow \left.\frac{\partial_{w_i}\mathring{F}(\bfw)}{\mathring{F}(\bfw)}\right|_{\bfw = 1^m} = \frac{x - x_i}{(1 - x_i)(1 - x)}.
\end{equation*}
Note also that $\sup_{\bfw \in \U(N, \epsilon)}{N|(w_i - 1)(w_j - 1)|} = O(N^{1 - 2\epsilon})$.
If we parametrize points in $\U(N, \epsilon)$ by
\begin{equation}\label{eqn:bfyw}
w_i = 1 - \frac{(1 - x_i)(1 - x)}{\theta N(x - x_i)} y_i, \ \textrm{ with } 0\leq y_i \leq \frac{\theta (x - x_i)N^{1-\epsilon}}{(1 - x_i)(1 - x)}, \textrm{ for all } i = 1, 2, \dots, m,
\end{equation}
we can use $(\ref{step1:claim2})$ to deduce
\begin{equation}\label{eqn:firstestimate}
\mathring{F}(\bfw)^{\theta N} = \exp\left( \theta N \ln{\mathring{F}(\bfw)} \right) =
\exp\left( -\sum_{i=1}^m{y_i} + O(N^{1 - 2\epsilon})\right)
= \exp\left( -\sum_{i=1}^m{y_i}\right) \times (1 + O(N^{1 - 2\epsilon})),
\end{equation}
where $O(N^{1 - 2\epsilon})$ is uniform for $\bfw = (w_1, \ldots, w_m) \in \U(N, \epsilon)$, and $\bfy = (y_1, \ldots, y_m)$ is defined from $\bfw$ as in $(\ref{eqn:bfyw})$. By similar Taylor expansions, we obtain
\begin{equation}\label{eqn:secondestimate}
\begin{gathered}
\mathring{G}(\bfw) = \mathring{G}(1^m) + O( N^{-\epsilon} )
= x^{-m\theta}\prod_{i=1}^m{(x - x_i)^{\theta}} + O(N^{-\epsilon}) = x^{-m\theta}\prod_{i=1}^m{(x - x_i)^{\theta}}  (1 + O(N^{-\epsilon})),\\
\frac{\mathring{F}(\bfw)^{-\theta m - 1}}{w_1\cdots w_m} = 1 + O(N^{-\epsilon}),
\end{gathered}
\end{equation}
where both bounds $O(N^{-\epsilon})$ are uniform for $\bfw \in \U(N, \epsilon)$. From the parametrization $(\ref{eqn:bfyw})$ of $\U(N, \epsilon)$, we have
\begin{equation}\label{eqn:thirdestimate}
\begin{gathered}
\prod_{i=1}^m{(1 - w_i)^{\theta - 1}} = (\theta N)^{m(1 - \theta)}\cdot\prod_{i=1}^m{ \frac{(1 - x_i)^{\theta - 1}(1 - x)^{\theta - 1}}{(x - x_i)^{\theta - 1}}y_i^{\theta - 1} },\\
\prod_{i=1}^m{dw_i} = (-1)^m (\theta N)^{-m}\cdot\prod_{i=1}^m{\frac{(1 - x_i)(1 - x)}{x - x_i}}\prod_{i=1}^m{dy_i}.
\end{gathered}
\end{equation}

Lastly we need to estimate the Jack polynomial term of the integrand. Let us use

\begin{equation}\label{eqn:firstboundjacks}
\begin{gathered}
J_{\lambda(N)}\left(x_1w_1, \dots,  x_mw_m, \frac{x}{w_1\cdots w_m}; N, \theta\right) = J_{\lambda(N)}(x_1, \ldots, x_m, x; N, \theta)\\
+ \sum_{n=1}^m{R_n(\bfx, \bfw; \lambda(N), \theta)(w_n - 1)},\\
R_n(\bfx, \bfw; \lambda(N), \theta) := \int_0^1{\frac{\partial}{\partial w_n}J_{\lambda(N)}\left(\dots, x_i(1 + t(w_i - 1)), \dots , \frac{x}{\prod_{i=1}^m{(1 + t(w_i - 1))}}; N, \theta\right)dt}.
\end{gathered}
\end{equation}
Observe that for any $t\in [0, 1]$, $\bfw\in\U_x$, we have the inequalities
\begin{equation*}
\begin{gathered}
1 > x_i \geq x_i(1 + t(w_i - 1))  \geq x_iw_i \geq x_ix > x_1^2 > 1 - \delta_{m+1},\ \forall i = 1, \dots, m,\\
1 \geq \frac{x}{w_1\cdots w_m} \geq \frac{x}{\prod_{i=1}^m{(1 + t(w_i - 1))}} \geq x > (1 - \delta_{m+1})^{1/2} > 1 - \delta_{m+1}.
\end{gathered}
\end{equation*}
By Lemma $\ref{lem:boundednessjacks}$ and our choice of $\delta_{m+1}$, we have that $\{J_{\lambda(N)}(z_1, \dots, z_m, z_{m+1}; N, \theta) : N \geq m+1\}$ is uniformly bounded on $(z_1, \dots, z_{m+1})\in\TT^{m+1}_{\delta_{m+1}}$. Thus the sequences of partial derivatives $\{\frac{\partial}{\partial z_n}J_{\lambda(N)}(z_1, \dots, z_m, z_{m+1}; N, \theta) : N \geq m+1\}$, $1 \leq n \leq m+1$, are also uniformly bounded on $\TT^{m+1}_{\delta_{m+1}}$. Consequently, from the definition of the functions $R_n(\bfx; \lambda(N), \theta)$ above, we obtain
\begin{equation*}
\sup_{N \geq m+1}{|R_n(\bfx, \bfw; \lambda(N), \theta)|} = O(1).
\end{equation*}
From $(\ref{eqn:firstboundjacks})$, we therefore have the bound
\begin{equation}\label{eqn:fourthestimate}
J_{\lambda(N)}\left(x_1w_1, \cdots, x_mw_m, \frac{x}{w_1\cdots w_m}; N, \theta\right) = J_{\lambda(N)}(x_1, \ldots, x_m, x; N, \theta) + O(N^{-\epsilon}),
\end{equation}
uniformly for $\bfw \in \U(N, \epsilon)$.

By combining the estimates $(\ref{eqn:firstestimate})$, $(\ref{eqn:secondestimate})$, $(\ref{eqn:thirdestimate})$ and $(\ref{eqn:fourthestimate})$, and the fact that $1 > \epsilon > \frac{1}{2}$ implies $0 > 1 - 2\epsilon > -\epsilon$ and $N^{1 - 2\epsilon} > N^{-\epsilon}$, then
\begin{equation}\label{lastestimate}
\begin{gathered}
\int\cdots\int{H_N(x_1, \ldots, x_m, x; w_1, \ldots, w_m; \theta) dw_1 \cdots dw_m} = (Nx\theta)^{-m \theta}(1 - x)^{m\theta}\prod_{i=1}^m{(1 - x_i)^{\theta}}\\
\times \left(J_{\lambda(N)}(x_1, \ldots, x_m, x; N, \theta) + O(N^{-\epsilon}) \right)\prod_{i=1}^m{\int_0^{N^{1 - \epsilon}\frac{\theta(x-x_i)}{((1 - x_i)(1 - x))}}{e^{-y_i}y_i^{\theta - 1}dy_i}}\cdot (1 + O(N^{1 - 2\epsilon}))\\
= (Nx\theta)^{-m \theta}(1 - x)^{m\theta}\prod_{i=1}^m{(1 - x_i)^{\theta}}(J_{\lambda(N)}(x_1, \ldots, x_m, x; N, \theta) + O(N^{-\epsilon})) \cdot \Gamma(\theta)^m\cdot (1 + O(N^{1 - 2\epsilon})),
\end{gathered}
\end{equation}
in the equality from the second to last line, we used that for any constant $c>0$ and $N$ large enough, we have $\int_{N^{1-\epsilon}c}^{\infty}{e^{-y}y^{\theta - 1}dy} \leq \int_{N^{1-\epsilon}c}^{\infty}{e^{-y/2}dy} = 2e^{-N^{1-\epsilon}c/2} = O(N^{1 - 2\epsilon})$ and so $\int_0^{N^{1-\epsilon}c}{e^{-y}y^{\theta - 1}dy} = \int_0^{\infty}{e^{-y}y^{\theta - 1}dy} + O(N^{1-2\epsilon}) = \Gamma(\theta) + O(N^{1 - 2\epsilon}) = \Gamma(\theta) (1 + O(N^{1-2\epsilon}))$. Thus we have proved the second claim.

We make one final observation. The factor $(-1)^m$ in $(\ref{eqn:thirdestimate})$ was removed in $(\ref{lastestimate})$ because $y_i$ goes from $N^{1 - \epsilon}\frac{\theta (x - x_i)}{(1 - x_i)(1 - x)}$ to $0$ (instead of going from $0$ to $N^{1 - \epsilon}\frac{\theta (x - x_i)}{(1 - x_i)(1 - x)}$, as we wrote in the latter integral) in the parametrization $(\ref{eqn:bfyw})$.
\end{proof}

\subsection{Proof of Theorem $\ref{thm:application1}$}

The work is already done, we simply put the pieces together. Let $m\in\N$ be arbitrary. Let $\delta > 0$ be small enough so that
\begin{equation*}
1 - \delta_0 < \max\{(1 - \delta)^m, \ (1 + \delta)^{-m}\} < 1 < \min\{(1 + \delta)^m, \ (1 - \delta)^{-m}\} < 1 + \delta_0.
\end{equation*}
We want to prove the limit
\begin{equation}\label{eqn:finaldesiredthm}
\lim_{N\rightarrow\infty}{J_{\lambda(N)}(z_1, \ldots, z_m; N, \theta)} = \prod_{i=1}^m{\Psi(z_i; \omega, \theta)}
\end{equation}
holds uniformly on $\TT_{\delta}^m$. By Corollary $\ref{cor:subsequential}$, any sequence $\{N_j\}_{j \geq 1}$ of positive integers has a subsequence $\{N_{k(n)}\}_{n\geq 1}$ such that $\lim_{n\rightarrow\infty}{J_{\lambda(N_{k(n)})}\left(z_1, \ldots, z_m; N_{k(n)}, \theta\right)}$ has a uniform limit on $\TT_{\delta}^m$, which is a holomorphic function that we call $\Phi(z_1, \ldots, z_m)$. By Proposition $\ref{prop:pointwiseconvergence}$, see also Remark $\ref{rem:deltaremark}$, the function $\Phi(z_1, \ldots, z_m)$ must equal $\prod_{i=1}^m{\Psi(z_i; \omega, \theta)}$ in the subset
\begin{equation*}
\begin{gathered}
D(m) = \{(z_1, \ldots, z_m) \in \R^m : 1 > z_m > z_{m-1} > \ldots > z_1 > (1 - \delta)^{1/2}, \\
z_3 > z_1/z_2, \cdots, z_m > z_{m-2}/z_{m-1}\}.
\end{gathered}
\end{equation*}
Clearly $D(m)\subset \TT_{\delta}^m$ is nonempty and has limit points.

Since $\Psi(z; \omega, \theta)$ is holomorphic on $\TT_{\delta_0}$ and $(1 + \delta)^m < 1 + \delta_0 < (1 + \delta_0)^m \Rightarrow \delta < \delta_0$, then it is also holomorphic on $\TT_{\delta}$. Thus $\prod_{i=1}^m{\Psi(z_i; \omega, \theta)}$ is holomorphic on $\TT_{\delta}^m$, just like $\Phi(z_1, \dots, z_m)$. Therefore, by analytic continuation, the equality $\Phi(z_1, \dots, z_m) = \prod_{i=1}^m{\Psi(z_i; \omega, \theta)}$ must hold on all $\TT_{\delta}^m$.

The analysis above shows that subsequential (uniform on $\TT_{\delta}^m$) limits of $\{J_{\lambda(N)}(z_1, \ldots, z_m; N, \theta)\}_{N \geq 1}$ exist and must always equal $\prod_{i=1}^m{\Psi(z_i; \omega, \theta)}$. Hence the desired uniform limit $(\ref{eqn:finaldesiredthm})$ follows.$\hfill\square$

\end{document}